\def\thm@space@setup{\thm@preskip=1.25pt
\thm@postskip=0pt}
\newtheoremstyle{newstyle}      
{} 
{} 
{\mdseries} 
{} 
{\bfseries} 
{.} 
{ } 
{} 
\newtheorem{theorem}{Theorem}[section]
\newtheorem{lemma}[theorem]{Lemma}
\newtheorem{corollary}[theorem]{Corollary}
\newtheorem{observation}[theorem]{Observation}
\theoremstyle{definition}
\newtheorem{definition}[theorem]{Definition}
\newtheorem{conjecture}[theorem]{Conjecture}
\theoremstyle{remark}
\newtheorem{remark}[theorem]{Remark}
\numberwithin{equation}{section}
\newcommand{\reals}{\mathbb{R}}
\begin{document}
\setcounter{page}{1}

\title[Semipositivity of matrices - linear preservers]
{On linear preservers of semipositive matrices}

\author[S. Jayaraman]{Sachindranath Jayaraman}
\address{School of Mathematics\\ 
Indian Institute of Science Education and Research Thiruvananthapuram\\ 
Maruthamala P.O., Vithura, Thiruvananthapuram -- 695 551, Kerala, India.}

\email{sachindranathj@iisertvm.ac.in, sachindranathj@gmail.com}

\author[V. N. Mer]{Vatsalkmar N. Mer}
\address{SQC \& OR Unit\\ 
Indian Statistical Institute\\ 
7, S. J. S. Sansanwal Marg,, New Delhi - 110 016, India.}

\email{vatsal.n15@iisertvm.ac.in, vnm232657@gmail.com}

\subjclass[2010]{15A86, 15B48}

\keywords{positive/semipositive matrices, linear preserver problems, rank one preserver, 
proper cones}

\begin{abstract}
Given proper cones $K_1$ and $K_2$ in $\reals^n$ and $\reals^m$, respectively, an 
$m \times n$ matrix $A$ with real entries is said to be semipositive if there exists 
a $x \in K_1^{\circ}$ such that $Ax \in K_2^{\circ}$, where $K^{\circ}$ denotes the 
interior of a proper cone $K$. This set is denoted by $S(K_1,K_2)$. We resolve a recent 
conjecture on the structure of into linear preservers of $S(\reals^n_+,\reals^m_+)$. We 
also determine linear preservers of the set $S(K_1,K_2)$ for arbitrary proper cones $K_1$ 
and $K_2$. Preservers of the subclass of those elements of $S(K_1,K_2)$ with a 
$(K_2,K_1)$-nonnegative left inverse as well as connections between strong linear 
preservers of $S(K_1,K_2)$ with other linear preserver problems are considered.
\end{abstract}

\maketitle

\section{Introduction}

We work throughout over the field $\reals$ of real numbers.
Let $M_{m,n}$ denote the set of all $m \times n$ matrices over $\reals$. 
When $m = n$, this set will be denoted by $M_{m}$ or $M_{n}$. 
A matrix $A \in M_{m,n}$ is said to be semipositive if there exists a 
$x > 0$ such that $Ax > 0$, where the inequalities are understood componentwise. 
$A$ is said to be minimally semipositive if it is semipositive and no proper 
$m \times p$ submatrix of $A$ is semipositive for $p < n$. $A$ is said to be 
redundantly semipositive if it is semipositive but not minimally semipositive. 
It is known that an $m \times n$ matrix $A$ is minimally semipositive if and only 
if $A$ is semipositive and $A$ has a nonnegative left inverse. Semipositivity 
characterizes invertible M-matrices within the class of Z-matrices (See Chapter 6, 
\cite{bp-book}). For recent results on semipositive matrices, their structure and preservers, one may refer to \cite{csv-1,prs-1,dgjjt,tsat-2} and the references 
cited therein.

For a field $\mathbb{F}$ and the set $M_{m,n}(\mathbb{F})$ of $m \times n$ matrices over 
$\mathbb{F}$, a linear preserver $L$ is a linear map $L : M_{m,n}(\mathbb{F}) 
\longrightarrow M_{m,n}(\mathbb{F})$ that preserves a certain property or a relation. 
There are two types of preserver problems. Given a subset $\mathcal{S}$ of 
$M_{m,n}(\mathbb{F})$, characterize linear maps $L$ on 
$M_{m,n}(\mathbb{F})$ such that $(i) \ L(\mathcal{S}) \subset \mathcal{S} \ 
\text{and} \ (ii) \ L(\mathcal{S}) = \mathcal{S}$. The first one is called an 
\textit{into} preserver and the latter an \textit{onto/strong} preserver. There is 
rich history on this topic within linear algebra as well as other areas of mathematics. 

Recall that an $m \times n$ matrix $A$ is row positive if $A$ is a nonnegative 
matrix with at least one nonzero entry in each row. A square matrix $B$ is said to 
be a monomial matrix if in addition to being a nonnegative matrix, each row and column 
of $B$ contains exactly one nonzero entry. The starting point and motivation for 
this work comes from the following results (Theorems 2.4, 2.11 and Corollary 2.7) 
due to Dorsey et al \cite{dgjjt}. 

\begin{enumerate}
\item (Theorem 2.4, \cite{dgjjt}) Let $L(A) = XAY$ for some $X \in M_m$ and $Y \in M_n$. 
Then $L$ is an into preserver of semipositivity if and only if $X$ is row positive 
and $Y$ is inverse nonnegative, or $-X$ is row positive and $-Y$ is inverse nonnegative. 
$L$ is an onto preserver of semipositivity if and only if $X$ and $Y$ are monomial, 
or $-X$ and $-Y$ are monomial (Corollary 2.7, \cite{dgjjt}).
\item (Theorem 2.11, \cite{dgjjt}) Let $L(A) = XAY$ for some $X \in M_m$ and 
$Y \in M_n$. Then $L$ is an into preserver of minimal semipositivity if and only 
if $X$ is monomial and $Y$ is inverse nonnegative, or $-X$ is monomial and $-Y$ is 
inverse nonnegative. $L$ is an onto preserver of minimal semipositivity if and only 
if $X$ and $Y$ are monomial, or $-X$ and $-Y$ are monomial.
\end{enumerate}

Our primary aim in this manuscript is to resolve the following conjecture due to 
Dorsey et al \cite{dgjjt}:

\begin{conjecture}\label{conj-1}
Let $L: M_{m,n} \rightarrow M_{m,n}$ be an invertible linear map. 
If $L$ is an into preserver of $S(\reals^n_+,\reals^m_+)$, then $L(A) = XAY$ for all 
$A \in M_{m,n}$, where $X$ is row positive and $Y$ is inverse nonnegative.
\end{conjecture}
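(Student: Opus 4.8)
The plan is to show that every invertible linear into-preserver $L$ of $S(\reals^n_+,\reals^m_+)$ is multiplicative, i.e. of the form $L(A)=XAY$; once this form is secured, the cited Theorem 2.4 supplies the conditions on $X$ and $Y$ (its two sign alternatives describe the same map after replacing $(X,Y)$ by $(-X,-Y)$, since $XAY=(-X)A(-Y)$, so we may normalise $X$ to be row positive and $Y$ inverse nonnegative). The engine for producing the multiplicative form will be the classical rank-one preserver theorem: a bijective linear map on $M_{m,n}$ carrying rank-one matrices to rank-one matrices has the form $A\mapsto XAY$, or $A\mapsto XA^{\mathsf T}Y$ when $m=n$. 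Thus the genuine work is (i) to extract rank-one preservation from semipositivity, and (ii) to discard the transpose alternative.

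First I would record a dual description of $S$ through a theorem of the alternative (Gordan): applied to $\binom{A}{I_n}$ it shows that $A\in S(\reals^n_+,\reals^m_+)$ if and only if there is no $u\gneq 0$ in $\reals^m$ with $A^{\mathsf T}u\le 0$. In particular $S$ is an open cone whose complement is the union, over $u\in\reals^m_+\setminus\{0\}$, of the polyhedral cones $\mathcal H_u=\{A:\ \langle A,\,ue_j^{\mathsf T}\rangle\le 0,\ j=1,\dots,n\}$. The crucial structural point is that the bounding hyperplanes of these cones, and hence the supporting hyperplanes of $S$ along the smooth part of $\partial S$, have \emph{rank-one} normals $ue_j^{\mathsf T}$ with $u\ge 0$. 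This is exactly what links the non-convex geometry of $S$ to rank one.

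Next I would argue that $L$ respects this normal structure. Since $L$ is a linear homeomorphism with $L(S)\subseteq S$, it maps $\overline S$ into $\overline S$; the key auxiliary step is to promote the into-preserver to a boundary-preserving map, so that $L$ carries $\partial S$ to $\partial S$ and the inverse adjoint $L^{-*}$ sends supporting normals of $S$ to supporting normals of $S$. Because those normals are precisely the rank-one matrices $ue_j^{\mathsf T}$, this forces $L^{-*}$ to send a spanning family of rank-one matrices to rank-one matrices, whence, by the rank-one preserver theorem, $L^{-*}$ — and therefore $L$ — is of the form $A\mapsto XAY$ or $A\mapsto XA^{\mathsf T}Y$. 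The transpose alternative is then eliminated using the asymmetry of semipositivity: $S$ is not closed under transposition (for instance $\left(\begin{smallmatrix}-1&2\\-1&3\end{smallmatrix}\right)\in S$, while its transpose has a nonpositive row and so lies outside $S$), and after reducing $A\mapsto XA^{\mathsf T}Y$ to the bare transpose by composing with known multiplicative preservers, this produces a semipositive matrix whose image leaves $S$, a contradiction. Applying Theorem 2.4 to the surviving form $L(A)=XAY$ finishes the argument.

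The main obstacle I anticipate is the passage from \emph{into} to boundary control: an into-preserver only guarantees $L(S)\subseteq S$ and $L(\partial S)\subseteq\overline S$, and nothing a priori prevents part of $\partial S$ from being pushed into the open set $S$, which would destroy the normal-cone matching on which the rank-one extraction rests. Overcoming this requires showing that an invertible into-preserver of $S$ is in fact a strong (onto) preserver — exploiting that $S$ is a full-dimensional cone invariant under positive scaling and running a topological or degree argument on the induced self-map of projective space, or equivalently iterating $L$ and analysing $\bigcap_k L^k(S)$. Establishing this rigidity, together with verifying that enough rank-one normals $ue_j^{\mathsf T}$ actually occur as supporting normals to pin $L^{-*}$ down, is where the technical heart of the proof lies.
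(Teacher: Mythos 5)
Your first stage --- extracting $L(A)=XAY$ from rank-one preservation via the Lautemann/Marcus--Moyls theorem, discarding the transpose alternative by the asymmetry of $S$ under transposition, and normalising $X,Y$ via Theorem 2.4 of \cite{dgjjt} --- is sound, and it is essentially the paper's Theorem \ref{theorem-suff-cond}. The fatal problem is the step you yourself flag as the ``technical heart'': promoting the invertible into-preserver to a boundary-preserving, hence strong (onto), preserver. That step is not merely unproven; it is \emph{false}. Take $m=n=2$, $X=\begin{pmatrix} 1 & 1\\ 0 & 1\end{pmatrix}$ and $L(A)=XA$. By Theorem 2.4 of \cite{dgjjt}, $L$ is an into preserver of $S(\reals^2_+,\reals^2_+)$ (here $Y=I$), and it is invertible as a map on $M_2$; but by Corollary 2.7 of \cite{dgjjt} it is not an onto preserver, since $X$ is not monomial. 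Concretely, $A=\begin{pmatrix} 0 & -1\\ 1 & 1\end{pmatrix}$ lies on $\partial S$: it is not semipositive because $(Ax)_1=-x_2<0$ for every $x>0$, yet it is the limit of the semipositive matrices $\begin{pmatrix} \epsilon & -1\\ 1 & 1\end{pmatrix}$. Nevertheless $L(A)=XA=\begin{pmatrix} 1 & 0\\ 1 & 1\end{pmatrix}$ has a positive column and lies in the open set $S$. So an invertible into-preserver really can push boundary points of $S$ into the interior; the supporting-hyperplane/normal-cone matching on which your rank-one extraction rests genuinely collapses, and no degree-theoretic or iteration argument on $\bigcap_k L^k(S)$ can deliver the onto-ness you need, because it simply does not hold.

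This into-versus-onto gap is exactly where the whole difficulty of Conjecture \ref{conj-1} lives, and the paper does not bridge it by any duality or boundary argument. Instead it proves \emph{directly}, by lengthy case analyses with explicitly constructed semipositive test matrices $B$ and small perturbations $y_2>0$ forcing $L(B)$ out of $S$ (Theorems \ref{2x2-thm-1}, \ref{non-MSP}, \ref{mxn-L(A_1)-rank-one-part-1} and \ref{mxn-L(A_1)-rank-one-part-2}), that an invertible into-preserver must map each special rank-one semipositive matrix $A_i=\textbf{xy}_i^t$ to a rank-one matrix; it then assembles $X$ and $Y$ explicitly from the images $L(A_i)$ in Theorem \ref{mxn-main-thm}, rather than passing through the general rank-one preserver theorem at all. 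Note also that your counterexample-proof plan only needs rank-one preservation on a spanning family, but the example above shows that normals cannot be transported by $L^{-\ast}$ in the first place: the image under $L$ of a supporting hyperplane of $S$ need not support $S$ anywhere. Any repair of your approach must work with nothing more than $L(S)\subseteq S$, which is precisely what the paper's constructive arguments are designed to do.
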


Dorsey et al had pointed out through an example that invertibility of the map $L$ is 
crucial in the above conjecture (see the Example in Section $4$ of \cite{dgjjt}). 
They had also added that there was computational evidence that the conjecture is true 
in the $2 \times 2$ case, but had no proof nor a counterexample. 

The purpose of this manuscript is twofold. The first one concerns resolving Conjecture \ref{conj-1}. 
The second part concerns linear preservers of the set $S(K_1,K_2)$ (see 
the next section for definitions and notations). We begin by recalling preliminaries 
about convex sets, nonnegative and semipositive matrices. Other necessary results on 
convex cones and positive operators are presented in a later section when we 
discuss linear preservers of $S(K_1,K_2)$. Our first result says that if an invertible linear map $L$ 
preserves the set of semipositive matrices and also maps every rank one semipositive matrix to a rank 
one (semipositive) matrix, then $L$ is a rank one preserver (Theorem \ref{theorem-suff-cond}) and 
consequently, $L(A) = XAY$ for invertible matrices 
$X$ and $Y$. This result provides a hint for resolving Conjecture \ref{conj-1}. Our main 
result says that an invertible linear map $L$ on $M_{m,n}$ that preserves semipositive matrices 
is of the form $L(A) = XAY$ for all $A \in M_{m,n}$ for some invertible row positive matrix $X \in M_m$ 
and inverse nonnegative matrix $Y \in M_n$ if and only if 
every rank one semipositive matrix of a special form gets mapped to a rank one matrix 
(see Theorem \ref{conclusion}). The structure of invertible maps $L$ on $M_2$ that 
preserve semipositivity (Theorems \ref{2x2-main-thm}) is discussed following 
Theorem \ref{theorem-suff-cond}. A similar argument also works for maps on $M_{m,2}$. 
The proof is constructive and we deduce that $L(A) = XAY$ for some row positive matrix 
$X$ and inverse nonnegative $Y$. The general case is taken up next. The proof follows 
ideas that were verified for an invertible linear map $L$ on $M_3$ (and more generally 
on $M_{m,3}, m \geq 3$) that preserves semipositive matrices. Since the calculations 
are involved and lengthy, the details of the $3 \times 3$ case are not included in 
the main part of the manuscript. We include these as an appendix. The second part of 
the manuscript concerns linear preservers of the set $S(K_1,K_2)$. 
This section is subdivided into further subsections as follows: 
(1) Useful results (2) Linear preservers of $S(K_1,K_2)$ 
(3) Preservers of $MS(K_1,K_2)$ (see the next section for the definition) (4) General 
strong preservers of $S(K_1,K_2)$ and (5) Left semipositivity and their preservers. Interesting 
connections between onto preservers of $S(K_1,K_2)$ and into preservers of nonnegativity 
are also brought out. To the best of our knowledge this appears new and completely 
settles this problem both over all proper cones, including the nonnegative orthants.

\begin{remark}
We shall use the same notation for a linear map $T: \reals^n \rightarrow \reals^m$ and 
its matrix representation, which we assume throughout to be with respect to the standard basis of 
$\reals^n$ and $\reals^m$, respectively.
\end{remark}

\section{Preliminary Results}\hspace{\fill}\\

We present the preliminary results in this section. 
 
\subsection{Convex cones, nonnegative and semipositive matrices}\hspace{\fill}\\
 
Let us recall that a subset $K$ of $\reals^n$ is called a convex cone if 
$K + K \subseteq K$ and $\alpha K \subseteq K$ for all $\alpha \geq 0$. $K$ is said to 
be proper if it is topologically closed, pointed ($K \cap -K = \{0\}$) and has nonempty 
interior $K^{\circ}$. $K$ is said to be polyhedral if $K = X(\reals^m_+)$ for some 
$n \times m$ matrix $X$ and simplicial when $X$ is invertible. The dual, 
$K^{\ast}$, is defined as 
$K^{\ast} = \{y \in \reals^n : \langle y, x \rangle \geq 0 \forall x \in K\}$, 
where $\langle .,. \rangle$ denotes the usual Euclidean inner product on $\reals^n$. 
When $K$ is a convex cone in $\reals^n$ such that $K = K^{\ast}$, we say that $K$ is 
a self-dual cone in $\reals^n$. The most well-known example of a proper (convex) 
self-dual cone is the nonnegative orthant in $\reals^n$: \\ 
$K = \reals^n_+ = \{x = (x_1, \ldots, x_n)^t \in \reals^n : x_i \geq 0 \ 
\forall \ 1 \leq i \leq n\}$. We assume that all cones in this manuscript are proper 
cones.

The following definitions, notations and basic results are standard (see, for instance 
\cite{barker-cones}). The only exception is Definition \ref{defn-2}, which is a natural 
generalization to proper cones the notion of minimally semipositive matrices (see 
\cite{jks} for the definition). 

\begin{definition}\label{defn-1}
For proper cones $K_1$ and $K_2$ in $\reals^n$ and $\reals^m$, respectively, we have 
the following notions. $A \in M_{m,n}$ is 
\begin{enumerate}
\item $(K_1,K_2)$-nonnegative if $A (K_1) \subseteq K_2$.
\item $(K_1,K_2)$-semipositive if there exists a $x \in K_1^\circ$ such that 
$Ax \in K_2^\circ$.
\end{enumerate}
\end{definition}

We denote the set of all matrices that are $(K_1,K_2)$-nonnegative by 
$\pi(K_1,K_2)$. When $K_1 = K_2 = K$, this will be denoted by $\pi(K)$. Let us also 
denote the set of all matrices that are $(K_1,K_2)$-semipositive  by $S(K_1,K_2)$. 
When $K_1 = K_2 = K$, this will be denoted by $S(K)$.

\begin{definition}\label{defn-2}
Let $A \in M_{m, n}$ be $(K_1, K_2)$-semipositive. We say $A$ is 
$(K_1, K_2)$-minimally semipositive  if $A$ has a $(K_2, K_1)$-nonnegative left inverse.
\end{definition}

The set of all $(K_1, K_2 )$-minimally semipositive matrices will be denoted by 
$MS(K_1, K_2)$.
 
\begin{definition}\label{defn-3}
A square matrix $Y$ is $K$-inverse nonnegative if $Y$ is invertible with $Y^{-1}$ being 
$K$-nonnegative.
\end{definition} 

\section{Main Results}

The main results of this manuscript are presented in this section. As stated previously, 
our aim in this manuscript is to resolve Conjecture \ref{conj-1}. The primary motivation 
for this problem comes from Theorems 2.4, 2.11 and 3.5 due to Dorsey et al \cite{dgjjt}. 
We assume that $m \geq n$ and $n \geq 2$. 

\subsection{The nonnegative orthants case} \hspace{\fill}\\

We begin by proving that if $L$ is an invertible linear preserver of semipositive 
matrices that also maps every rank one semipositive matrix to a rank one (semipositive) matrix, 
then $L$ is in the standard form. It is obvious that 
if $L(A) = XAY$ for some invertible row positive matrix $X$ and inverse 
nonnegative matrix $Y$, the map $L$ has the property mentioned above.

\medskip
\noindent
\subsubsection{\textbf{A sufficient condition}} \hspace{\fill}\\

For $0 \neq x \in \reals^m$ and consider the set $U_x := \{xu^t: u \in \reals^n\}$, 
an $n$-dimensional subspace of $M_{m,n}$ consisting of matrices rank at most one.

\begin{theorem}\label{theorem-suff-cond}
Let $L: M_{m,n} \rightarrow M_{m,n}$  be an invertible linear map. Assume 
that $L$ satisfies the following conditions:
\begin{enumerate}
\item $L$ is an into preserver of $S(\reals^n_+,\reals^m_+)$.
\item $rank(L(A)) = 1$, whenever $rank(A) =1$ and $A \in S(\reals^n_+,\reals^m_+)$.
\end{enumerate}
Then $L(A) = XAY$ for any $A \in M_{m,n}$, where $X \in M_m$ is invertible 
and row positive and $Y \in M_n $ is inverse nonnegative.
\end{theorem}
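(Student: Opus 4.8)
The plan is to show that the two hypotheses force $L$ to be a rank one preserver in the full sense --- that it sends \emph{every} rank one matrix, not merely the semipositive ones, to a rank one matrix --- and then to read off the multiplicative form from the classical structure theory of rank one preservers together with the into-preserver characterization of Dorsey et al (Theorem 2.4, \cite{dgjjt}).

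First I would determine exactly which rank one matrices are semipositive. For $0 \neq x \in \reals^m$ and $0 \neq u \in \reals^n$ one has $xu^t \in S(\reals^n_+,\reals^m_+)$ precisely when $(u^tv)x \in (\reals^m_+)^{\circ}$ for some $v \in (\reals^n_+)^{\circ}$; since $u^tv$ is a scalar, this forces $x$ to have all entries of one strict sign. Fixing any $x$ with $x > 0$, the admissible vectors $u$ are exactly those not lying in $-\reals^n_+$, an open dense subset of $\reals^n$, so the semipositive matrices in $U_x$ form a dense subset of the $n$-dimensional subspace $U_x$. By hypothesis $(2)$, $L$ carries this dense set into the set of matrices of rank at most one; as the latter is closed (it is cut out by the vanishing of the $2\times 2$ minors) and $L$ is continuous, we get $L(U_x) \subseteq \{A : \mathrm{rank}\,A \le 1\}$ for every $x > 0$. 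Since $L$ is invertible, each $L(U_x)$ is then an $n$-dimensional subspace all of whose nonzero elements have rank exactly one.

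Next I would invoke the classification of subspaces of $M_{m,n}$ consisting of rank at most one matrices: any such subspace of dimension at least two has either a common column space or a common row space. Since $n \ge 2$, each $L(U_x)$ is of one of these two types --- either $L(U_x) = U_{x'}$ for some $x' \in \reals^m$ (common column space) or a subspace with a single common row direction. The crucial step is to show that the \emph{same} alternative occurs for all $x > 0$, and that it is the common column space alternative. Granting this, comparing the images of $U_x$ for the positive vectors $x$ yields a well-defined linear assignment $x \mapsto Xx$ on the column side and a single linear map $Y$ on the row side, giving $L(xu^t) = (Xx)(Yu)^t = X(xu^t)Y$ on all rank one matrices; as rank one matrices span $M_{m,n}$ and both sides are linear, $L(A) = XAY$ for all $A$, with $X \in M_m$ and $Y \in M_n$ invertible because $L$ is. Finally, hypothesis $(1)$ together with Theorem 2.4 of \cite{dgjjt} forces either $X$ row positive and $Y$ inverse nonnegative, or $-X$ row positive and $-Y$ inverse nonnegative; since $XAY = (-X)A(-Y)$, absorbing the sign gives the stated normal form.

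The main obstacle is precisely the consistency/gluing step: passing from the local fact that each $L(U_x)$ is a rank at most one subspace to the global multiplicative form, and ruling out the common row space (equivalently, transpose) alternative. When $m > n$ the transpose form $A \mapsto XA^tY$ cannot be a self-map of $M_{m,n}$ with square invertible factors, so the common row space alternative cannot assemble into a bijection of $M_{m,n}$ and is excluded outright. When $m = n$ it corresponds exactly to the transpose form, which I would exclude using semipositivity directly: under $A \mapsto XA^tY$ one has $L(xu^t) = (Xu)(Y^tx)^t$, so it is the row direction rather than the column direction of the image that is governed by $x$; this is incompatible with applying hypothesis $(1)$ to the family of semipositive matrices $xu^t$ with $x > 0$ fixed and $u$ varying, whose images must retain the fixed sign structure carried by the column vector $x$. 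Carrying this dichotomy through cleanly, and verifying that the column-type data $x \mapsto Xx$ and $u \mapsto Yu$ are genuinely independent of the auxiliary choices, is the technical heart of the argument.
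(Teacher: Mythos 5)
Your proposal has two problems, one fixable and one structural. The fixable one: the set $\reals^n \setminus (-\reals^n_+)$ is open but \emph{not} dense in $\reals^n$, because $-\reals^n_+$ is a proper cone with nonempty interior; consequently the semipositive matrices in $U_x$ are not dense in $U_x$, and your continuity-plus-closedness argument only yields $\mathrm{rank}\,L(xu^t) \le 1$ for $u$ outside the interior of $-\reals^n_+$. The conclusion is still true, but it needs a different justification, e.g.\ the one the paper uses: for $0 \neq u \in -\reals^n_+$ the matrix $x(-u)^t$ is semipositive, so $L(xu^t) = -L(x(-u)^t)$ has rank one by linearity (alternatively, the $2\times 2$ minors of $L(xu^t)$ are polynomials in $u$ vanishing on a nonempty open set, hence identically zero).

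The structural problem is that the proposal stops exactly where the real work begins: you write ``Granting this'' before the consistency/gluing step and end by calling that step ``the technical heart of the argument,'' but that step \emph{is} the theorem, and nothing in the proposal proves it. Moreover, the exclusion arguments you sketch for the common-row-space alternative are not sound at the level where you need them: for a single fixed $x$, the space of matrices whose row space lies in $\mathrm{span}\{q\}$ has dimension $m \ge n$, so the $n$-dimensional subspace $L(U_x)$ fits inside it with no dimensional contradiction; the dimension count only bites after the local alternatives have already been glued into a global transpose-type map, which is precisely the step being deferred. The paper avoids hand-gluing altogether: having shown $L(U_x) = U_y$ for $x \in (\reals^m_+)^\circ \cup (-(\reals^m_+)^\circ)$, it extends this to \emph{every} $U_z$ by writing an arbitrary $z$ as $z_1 - z_2$ with $z_1, z_2 \in (\reals^m_+)^\circ$ linearly independent, so that $L$ preserves the full set of rank one matrices; it then invokes the Lautemann/Marcus--Moyls classification of invertible rank-one preservers (Theorem 2 of \cite{lau}) to get $L(A) = XAY$, or $m = n$ and $L(A) = XA^tY$, rules out the transpose form using semipositivity, and applies Theorem 2.4 of \cite{dgjjt} to pin down $X$ and $Y$. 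If you want to keep your route, you still owe (i) a proof that the row-space alternative never occurs for any fixed $x > 0$ --- this can be done from hypothesis (1): in that case $L(xu^t) = z_u q^t$ with $u \mapsto z_u$ linear and injective, semipositivity forces $z_u \in (\reals^m_+)^\circ \cup (-(\reals^m_+)^\circ)$ for all $u \notin -\reals^n_+$, connectedness of that set (for $n \ge 2$) puts every $z_u$ in one component, and comparing $u$ and $-u$ for a mixed-sign $u$ such as $e_1 - e_2$ gives a contradiction --- and (ii) the actual construction of $X$ and $Y$ from the family $\{L(U_x)\}_{x>0}$; neither is routine enough to leave unproved.
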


\begin{proof}
The proof involves four steps.
	
\medskip
\noindent
\underline{Claim 1:} For any $x \in (\reals^m_+)^\circ \cup (-(\reals^m_+)^\circ)$, any 
$0 \neq A \in L(U_x)$ has rank one.
	
\medskip
\noindent
\underline{Proof:} Let $x \in (\reals^m_+)^\circ$. We discuss two cases here. \\
\noindent
\underline{Case 1:} Suppose $u \in \reals^n \setminus (-\reals^n_+)$. Then 
$xu^t \in S(\reals^n_+,\reals^m_+)$ and so $rank(L(xu^t)) =1$. \\
\noindent
\underline{Case 2:} $0 \neq u \in -\reals^n_+$. In this case, 
$x(- u)^t \in S (\reals^n_+,\reals^m_+)$, and once again $rank(L(x(-u)^t)) =1$. 
A similar conclusion holds for $x \in (-\reals^m_+)^\circ$. 
	
\vspace{0.2cm}
\noindent
\underline{Claim 2:} For $x \in (\reals^m_+)^\circ \cup (-(\reals^m_+)^\circ), 
\ L(U_x) = U_y$  for some $y \in \reals^m$.
	
\noindent
\underline{Proof:} Suppose the claim is not true. Without loss of generality, assume 
that there exist linearly independent vectors $u_1, u_2 \in \reals^n$ such that 
$xu_1^t $, $xu_2^t \in S(\reals^n_+,\reals^m_+),  L(xu_1^t) = z_1 q_1^t$ and 
$L(xu_2^t) = z_2 q_2^t$, where $z_1, z_2 \in \reals^n_+$ are linearly independent and 
$q_1, q_2 \in \reals^m$. We then have $L(x(u_1 + u_2)^t) = L(xu_1^t) + L(xu_2^t) = 
z_1 q_1^t + z_2 q_2^t$. Since $rank(x(u_1+ u_2)^t) = 1$, we have $L(x(u_1+u_2)^t) = 
z_3 q_3^t$. Therefore, $q_1$ and $q_2$ must be linearly dependent. 
We thus have a $2$-dimensional subspace of $M_{m,n}$ that is mapped to a 
$1$-dimensional subspace of $M_{m,n}$. This contradiction proves the claim. 
	
\medskip
\noindent
\underline{Claim 3:} For $0 \neq z \in \reals^m, \ L(U_z) = U_p$ for $0 \neq p \in \reals^m$. 
	
\noindent
\underline{Proof:} If $z \in (\reals^m_+)^\circ \cup (-(\reals^m_+)^\circ)$, then we 
have $L(U_z) = U_p$, for some $p \in \reals^m$ (by Claim 2). If $z$ belongs to the complement 
of $(\reals^m_+)^\circ \cup (-(\reals^m_+)^\circ)$, then we can write 
$z = z_1- z_2$, for some linearly independent $z_1, z_2 \in (\reals^m_+)^\circ$. 
Since $z_1$ and $z_2$ are linearly independent, 
there exist linearly independent $p_1$ and $p_2$ such that $L(U_{z_1}) = U_{p_1}$ 
and $L(U_{z_2}) = U_{p_2}$. Suppose there exists $q \in \reals^n$ such that 
$L(z_1q^t)= p_1q_1^t$ and $L(z_2q^t)= p_2q_2^t$, where $q_1$ and $q_2$ are linearly 
independent. We then have $L((z_1 + z_2)q^t) = p_1q_1^t + p_2q_2^t$, a rank two matrix. 
This contradiction (using Claim 1) proves that $L(U_z) = U_p$, for some $0 \neq p \in \reals^m$.
	
\medskip
\noindent
We have thus proved that $L$ preserves the set of rank $1$ matrices in $M_{m,n}$. 
It now follows from Theorem 2 of \cite{lau} that $L(A) = XAY$ or $m = n$ and 
$L(A) = XA^tY$, for all $A \in M_{m,n}$, for invertible matrices $X$ and $Y$. Since 
$A \mapsto A^t$ need not preserve semipositivity, there exist no invertible matrices 
$X$ and $Y$ such that the map $A \mapsto XA^tY$ preserves semipositivity. Therefore, 
the map $A \mapsto XA^tY$ can be ruled out. Finally, Theorem 2.4 of \cite{dgjjt}, 
yields the desired conclusion on $X$ and $Y$. 
\end{proof}

Theorem 2 of \cite{lau} is actually a real version of the Marcus-Moyls result on 
rank one preservers (see Theorem 1 and the Corollary following it in 
\cite{marcus-moyls-1}). The idea behind Theorem \ref{theorem-suff-cond} comes from 
\cite{rodman-semrl}. It gives us a sufficient condition to check for a map to preserve 
semipositivity. Note that Theorem \ref{theorem-suff-cond} also holds for $m < n$. 
We prove that if $L$ is an invertible 
linear map on $M_{m,n}$ that preserves semipositivity, then any rank one 
semipositive matrix of the form $A_i = \textbf{xy}_{i}^t$, where 
$\textbf{x} = (x_1, x_2, \ldots, x_m)^t \in \reals^m$, and 
$\textbf{y}_{i} = (0,\ldots, y_i,  \ldots, 0 )^t \in \reals^n$, gets mapped to a 
rank one matrix. We exploit this to prove the result for $n = 2$.

\medskip
\noindent
\subsubsection{\textbf{The $n = 2$ case}} \hspace{\fill} \\

Let us observe that a $2 \times 2$ matrix $A$ is semipositive if and only if $A$ has 
a positive column or has one of the forms 
\begin{equation*}
A = \begin{bmatrix*}[r]
     a & -b\\
     -c & d
    \end{bmatrix*} \ \mbox{or} \  
A = \begin{bmatrix*}[r]
     -b & a\\
      d & -c
    \end{bmatrix*},
\end{equation*} where $a > 0, d > 0, b \geq 0, c \geq 0$ and $ad - bc > 0$. 

Let us take the usual basis $\{E_{ij}: i = 1,2, j = 1,2\}$ of $M_2$. 
For a linear map $L$ on $M_2$, let us write down the matrix representation of 
$L(E_{ij})$. It is then easy to write the matrix representation of any rank one 
matrix $A = \textbf{xy}^t$. We do this below.

\medskip
\noindent
Let $L$ be a linear map on $M_2$ and let $A = \textbf{xy}^t$ be a rank one matrix, 
where $\textbf{x} = (x_1,x_2)^t$ and $\textbf{y} = (y_1,y_2)^t$. We then have 
\begin{equation*}L(A) =
\begin{bmatrix*}[r]
(\alpha_1 x_1 + \alpha_3 x_2)y_1 + (\alpha_2 x_1 + \alpha_4 x_2)y_2 & 
(\beta_1 x_1 + \beta_3 x_2)y_1 + (\beta_2 x_1 + \beta_4 x_2)y_2 \\
(\gamma_1 x_1 + \gamma_3 x_2)y_1 + (\gamma_2 x_1 + \gamma_4 x_2)y_2 & 
(\delta_1 x_1 + \delta_3 x_2)y_1 + (\delta_2 x_1 + \delta_4 x_2)y_2
\end{bmatrix*}, 
\end{equation*}
where $\alpha_i, \beta_i, \gamma_i, \delta_i, \ i = 1, \ldots, 4$ are fixed real numbers. 
In other words, we have \\
$L(A) =
\begin{bmatrix*}[r]
(\alpha_1 x_1 + \alpha_3 x_2) & (\beta_1 x_1 + \beta_3 x_2)\\
(\gamma_1 x_1 + \gamma_3 x_2) & (\delta_1 x_1 + \delta_3 x_2)
\end{bmatrix*} y_1 +$  
$\begin{bmatrix*}[r]
(\alpha_2 x_1 + \alpha_4 x_2)  & (\beta_2 x_1 + \beta_4 x_2) \\
(\gamma_2 x_1 + \gamma_4 x_2) & (\delta_2 x_1 + \delta_4 x_2)
\end{bmatrix*} y_2$. 
\medskip

\noindent
A similar form exists for $n \geq 3$ that will be used later.

\begin{theorem}\label{2x2-thm-1}
Let $L$ be an invertible linear map on $M_2$ and $L(S(\reals^2_+)) \subset S(\reals^2_+)$. 
If $A_1 = \textbf{xy}_{1}^t \in S(\reals^2_+)$ and $A_2 = \textbf{xy}_{2}^t \in S(\reals^2_+)$, 
where $\textbf{x} = (x_1, x_2)^t, \textbf{y}_{1} = (y_1, 0)^t$ and $\textbf{y}_{2} = (0, y_2)^t$, 
then \textit{rank} $(L(A_1)) = 1$ and \textit{rank} $(L(A_2)) = 1$. Moreover, 
$L(A_1) = \textbf{uv}^t y_1$ and $L(A_2) = \textbf{pq}^t y_2$, 
where $\textbf{u} =( (\alpha_1 x_1 + \alpha_3 x_2), (\gamma_1 x_1 + \gamma_3 x_2) )^t, \ 
\textbf{v} = (1, -\alpha)^t$, $\textbf{p} = ((\beta_2 x_1 + \beta_4 x_2), (\delta_2 x_1 + \delta_4 x_2))^t, \ 
\textbf{q} = (-\gamma, 1)^t$, 
$\begin{bmatrix*}[r]
\alpha_1 & \alpha_3 \\
\gamma_1 & \gamma_3
\end{bmatrix*}  \geq 0 \ 
\begin{bmatrix*}[r]
\beta_2 & \beta_4 \\
\delta_2 & \delta_4
\end{bmatrix*} \geq 0, \ \alpha \geq 0 \ \& \ \gamma \geq 0$.
\end{theorem}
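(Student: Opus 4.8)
The plan is to exploit the explicit action of $L$ on rank-one matrices recorded just above the statement. For $\mathbf{y}_1 = (y_1,0)^t$ the two terms carrying $y_2$ drop out, so $L(A_1) = y_1\bigl(x_1 L(E_{11}) + x_2 L(E_{21})\bigr)$; writing $L(A_1) = y_1 M_1$ with $M_1$ the matrix whose first column is $\mathbf{u} = (\alpha_1 x_1 + \alpha_3 x_2,\ \gamma_1 x_1 + \gamma_3 x_2)^t$ and whose second column has entries $\beta_1 x_1 + \beta_3 x_2$ and $\delta_1 x_1 + \delta_3 x_2$, the stated conclusion is equivalent to showing that the second column of $M_1$ equals $-\alpha$ times the first for a single constant $\alpha \ge 0$ (so that $M_1 = \mathbf{u}(1,-\alpha)$ has rank one) together with $\mathbf{u} \ge 0$. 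Thus I would reduce everything to the proportionality of the two columns of $M_1$, uniformly in $\mathbf{x}$; the treatment of $A_2$ is identical with the roles of the columns interchanged and $-\gamma$ in place of $-\alpha$.

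First I would record what the hypothesis $L(S(\reals^2_+)) \subseteq S(\reals^2_+)$ delivers. Since $A_1 = \mathbf{x}\mathbf{y}_1^t$ is semipositive precisely when (after fixing the sign of $y_1$) $\mathbf{x}$ lies in the open positive quadrant, the image of the whole two-parameter positive wedge $\{x_1 L(E_{11}) + x_2 L(E_{21}) : x_1, x_2 > 0\}$ must sit inside the open, non-convex cone $S(\reals^2_+)$. I would apply to this wedge the explicit $2\times2$ semipositivity criterion recalled before the theorem (a positive column, or one of the two crossed sign patterns with $ad - bc > 0$). Letting $x_1 \to 0^+$ and $x_2 \to 0^+$ forces $L(E_{21})$ and $L(E_{11})$ into the closure $\overline{S(\reals^2_+)}$, which restricts their admissible column sign patterns; combined with the criterion on the interior of the wedge this should yield the nonnegativity assertions $\alpha_1,\gamma_1,\alpha_3,\gamma_3 \ge 0$ (that is, $\mathbf{u} \ge 0$) and the candidate value of $\alpha$.

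The real obstacle is that semipositivity of the wedge by itself does \emph{not} force rank one: the wedge of positive diagonal matrices $\{\operatorname{diag}(x_1+2x_2,\,x_1+x_2)\}$ is entirely semipositive yet generically of rank two, so the proportionality of the columns cannot be extracted from $A_1$ alone. To rule out such rank-two configurations I would bring in the global hypothesis that the \emph{same} invertible map $L$ preserves all of $S(\reals^2_+)$. Concretely I would test $L$ on the column-mixing and full-rank semipositive matrices obtained from the sums $A_1 + A_2 = \mathbf{x}(\mathbf{y}_1+\mathbf{y}_2)^t$ and from the crossed sign-pattern families, and use that the four images $L(E_{ij})$ form a basis of $M_2$, to show that any non-proportional pair of columns of $M_1$ would produce a semipositive matrix whose image fails to be semipositive. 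This cross-checking between the $A_1$-wedge, the $A_2$-wedge and the mixed matrices, together with the attendant sign bookkeeping, is the step I expect to be the most delicate.

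Once the columns of $M_1$ are known to be proportional, I would set $\alpha$ to be minus their common ratio, verify $\alpha \ge 0$ from the wedge-boundary analysis of the second step, and read off $L(A_1) = \mathbf{u}\mathbf{v}^t y_1$ with $\mathbf{v} = (1,-\alpha)^t$ and $\mathbf{u} \ge 0$. Repeating the argument verbatim for $L(E_{12})$ and $L(E_{22})$, where the semipositive region again corresponds to $\mathbf{x}$ in the open positive quadrant but the surviving positive column is now the second one, gives $\textit{rank}(L(A_2)) = 1$ and $L(A_2) = \mathbf{p}\mathbf{q}^t y_2$ with $\mathbf{p} \ge 0$ and $\mathbf{q} = (-\gamma,1)^t$, $\gamma \ge 0$, which completes the proof.
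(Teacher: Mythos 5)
Your plan identifies the right target (proportionality of the two columns of $M_1$, uniformly in $\mathbf{x}$, plus the sign constraints) and, to your credit, the right obstacle: semipositivity of the wedge $\{x_1L(E_{11}) + x_2L(E_{21}) : x_1,x_2>0\}$ alone cannot force rank one. But the proposal stops exactly where the proof has to begin. The step you defer as ``the most delicate'' --- producing explicit semipositive matrices whose images fail to be semipositive whenever the columns of $M_1$ are not proportional --- is the entire content of the theorem, and it is what the paper actually carries out: it first rules out that $L(A_1)$ is minimally semipositive or has a positive row by testing $L$ on $B = \begin{bmatrix*}[r] -x_1 & x_1y_2 \\ -x_2 & x_2y_2\end{bmatrix*}$ with $y_2>0$ small (Claims 1--2), which pins down the sign structure ``one positive column, the other nonpositive''; it then proves linear dependence of the coefficient vectors $(\alpha_1,\alpha_3)^t$ and $(\beta_1,\beta_3)^t$ by solving $Wp = d<0$ and testing $L$ on $\begin{bmatrix*}[r] p_1 & x_1y_2 \\ p_2 & x_2y_2\end{bmatrix*}$ (Claim 3); and finally it forces the row-ratios of the two rows to coincide (Claim 5) with yet another such construction. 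None of these constructions, nor any substitute for them, appears in your proposal, so no rank-one conclusion has actually been established.

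Moreover, the two mechanisms you do commit to are flawed. First, the limiting argument: letting $x_1 \to 0^+$ only places $L(E_{21})$ in $\overline{S(\reals^2_+)}$, and that closure carries essentially no sign information --- for instance every matrix with a zero column lies in it, since $\begin{bmatrix*}[r] a & \epsilon \\ b & \epsilon \end{bmatrix*}$ is semipositive for every $\epsilon>0$ regardless of $a,b$; indeed $\begin{bmatrix*}[r] -1 & 1 \\ -1 & 1 \end{bmatrix*}$ lies in $S(\reals^2_+)$ itself. So closure membership cannot deliver $\mathbf{u}\ge 0$; the paper instead derives nonnegativity (Claim 4) by observing that a mixed-sign coefficient row would produce a semipositive matrix $\begin{bmatrix*}[r] z_1 & 0 \\ z_2 & 0\end{bmatrix*}$, $z>0$, whose image has a zero row. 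Second, the case of $A_2$ is not ``verbatim'' symmetric: the symmetric argument only shows $L(A_2)$ is rank one with \emph{some} positive column, and a priori that column could be the first one, giving $\mathbf{q} = (1,-\gamma)^t$ rather than $(-\gamma,1)^t$. Excluding that configuration is the paper's Claim 6, which uses the invertibility of $L$ (so that $\alpha \neq \gamma$) together with another explicit test matrix whose image acquires a negative first row; your proposal simply asserts this conclusion without an argument.
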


\begin{proof}
The proof involves several steps.
 
\noindent
\underline{Claim 1:} $L(A_1)$ and $L(A_2)$ are not minimally semipositive.

\noindent
Suppose there exists a $A_1 = \textbf{xy}_{1}^t \in S(\reals^2_+)$ such that $L(A_1)$ is 
a minimally semipositive matrix. We know that either $L(A_1) = 
\begin{bmatrix*}[r]
a & -b \\
-c & d
\end{bmatrix*} y_1$ or $L(A_1) = 
\begin{bmatrix*}[r]
-b & a \\
d & -c
\end{bmatrix*} y_1$, where $a > 0$, $d > 0$, $b \geq 0$, $c \geq 0$ and $ad > bc$. 
Consider the matrix $B = \begin{bmatrix*}[r]
						  -x_1 & x_1 y_2  \\
						  -x_2 & x_2 y_2
						  \end{bmatrix*}$, where $y_2 > 0$. It is clear that $B$ is 
semipositive. If the inverse of $L\Bigg(\begin{bmatrix*}[r]
						  -x_1 & 0  \\
						  -x_2 & 0
						  \end{bmatrix*}\Bigg)$ is negative, it is possible to choose 
a $y_2 > 0$, sufficiently small, such that the inverse of $L(B)$ is nonpositive. It 
follows that $L(B) \notin S(\reals^2_+)$. If the inverse of 
$L\Bigg(\begin{bmatrix*}[r]
						  -x_1 & 0  \\
						  -x_2 & 0
						  \end{bmatrix*}\Bigg)$ is nonpositive, say 
$L\Bigg(\begin{bmatrix*}[r]
						  -x_1 & 0  \\
						  -x_2 & 0
						  \end{bmatrix*}\Bigg) = 
						 \begin{bmatrix*}[r]
						  -a & b\\
						  0 & -d
						  \end{bmatrix*}$ with $a >0, \ b > 0, \ d > 0$, then 
$L(B) = \begin{bmatrix*}[r]
		 -a & b\\
		  0 & -d
		\end{bmatrix*}	+ \begin{bmatrix*}[r]
						  {\ast} & {\ast}\\
						  f & {\ast}
						  \end{bmatrix*}$. If $f \geq 0$, then choose a sufficiently small $y_2 > 0$ 
such that the inverse of $L(B)$ is nonpositive, thereby making 
$L(B)$ not semipositive. If $f < 0$, then choose $y_2 > 0$ such that the second row of 
$L(B)$ is negative. This once again makes $L(B)$ not semipositive. 			  
This proves the claim. Thus, $L(A_1)$ must be a redundantly semipositive matrix.
 
\medskip
		
\noindent
\underline{Claim 2:} $L(A_1)$ cannot have a positive row. 
	
\noindent
If $L(A_1)$ has positive row, then by choosing $y_2 > 0$ sufficiently small and by taking 
$B = \begin{bmatrix*}[r]
-x_1 & x_1 y_2  \\
-x_2 & x_2 y_2
\end{bmatrix*} \in S(\reals^2_+)$, we can show that $L(B)$ contains nonpositive row and 
consequently will not be semipositive.\\
	
We can thus assume without loss of generality that, 
$L(A_1) = \begin{bmatrix*}[r]
a & -b \\
c & -d
\end{bmatrix*}y_1$, where $a>0, c > 0, b \geq 0, d \geq 0$. Thus, 
$L(A_1) = \begin{bmatrix*}[r]
\alpha_1 x_1 + \alpha_3 x_2 & \beta_1 x_1 + \beta_3 x_2 \\
\gamma_1 x_1 + \gamma_3 x_2 & \delta_1 x_1 + \delta_3 x_2
\end{bmatrix*} y_1$. 
	
\medskip
	
\noindent
\underline{Claim 3:} $(\alpha_1,\alpha_3)^t$ and $(\beta_1,\beta_3)^t$ are linearly dependent. 
	
\noindent
Suppose $(\alpha_1,\alpha_3)^t$ and $(\beta_1,\beta_3)^t$ are linearly independent. 
Consider the invertible matrix $W = \begin{bmatrix*}[r]
\alpha_1 & \alpha_3 \\
\beta_1 & \beta_3
\end{bmatrix*}$ and let $d= (-d_1, -d_2)^t < 0$. Take $B = \begin{bmatrix*}[r]
p_1 & x_1 y_2 \\
p_2 & x_2 y_2
\end{bmatrix*} \in S(\reals^2_+)$, where $p = (p_1, p_2)^t$ is such that $Wp = d$, 
$x_1 > 0$, $x_2 > 0$ and $y_2 > 0$ is sufficiently small enough. It is then possible to 
make the first row of $L(B)$ is negative, thereby making $L(B)$ not semipositive. This 
contradiction proves the claim. 

Since $\alpha_1 x_1 + \alpha_3 x_2 = a$ and $\beta_1 x_1 + \beta_3 x_2 = -b$, there exists 
$\alpha \geq 0$ such that $(\beta_1, \beta_3)^t = -\alpha (\alpha_1, \alpha_3)^t$. 
Similarly, we can show that $(\delta_1, \delta_3)^t = -\beta (\gamma_1, \gamma_3)^t$, 
where $\beta \geq 0$. 
	
\medskip
	
\noindent
\underline{Claim 4:} The matrix $\begin{bmatrix*}[r]
\alpha_1 & \alpha_3 \\
\gamma_1 & \gamma_3
\end{bmatrix*}  \geq 0$. 
	
\noindent
If $(\alpha_1, \alpha_3)^t$ contains both positive and negative entries, then there 
exists $(z_1, z_2)^t > 0$ with $(\alpha_1, \alpha_3)(z_1, z_2)^t = 0$. Then, by taking 
the semipositive matrix $B = \begin{bmatrix*}[r]
z_1 & 0\\
z_2 & 0
\end{bmatrix*}$, we see that $L(B)$ contains a zero row. Thus, 
$L(B) \notin S(\reals^2_+)$, which implies that $(\alpha_1, \alpha_3)^t \geq 0$. 
Similarly,  we can show that $(\gamma_1, \gamma_3)^t \geq 0$. 
	
\medskip
	
\noindent
\underline{Claim 5:} $\alpha = \beta$.
	
\noindent
Suppose $\alpha \neq \beta$. We first consider the $\beta > \alpha$ case; the other 
case is similar.

\noindent
\underline{Case 1:} $\beta - \alpha > 0$. 
	
\noindent
Since $L$ is invertible, the matrix 
$V=\begin{bmatrix*}[r]
\alpha_1 & \alpha_3 \\
\gamma_1 & \gamma_3
\end{bmatrix*}$ is invertible. Let $(q_1, q_2)^t \in \reals^2$ such that 
$V(q_1, q_2)^t = (-1, 1)^t$. Let
$B = \begin{bmatrix*}[r]
q_1 & x_1 y_2 \\
q_2 & x_2 y_2
\end{bmatrix*} \in S(\reals^2_+)$, where $x_1$ and $x_2$ are positive and $y_2 > 0$. 
As in Claim 1, we can choose $y_2 > 0$ that is sufficiently small such that $L(B)$ 
either has a nonpositive inverse or a nonpositive row, thereby making it 
not semipositive. Thus, this case does not arise. 
	
\noindent
\underline{Case 2:} $\beta - \alpha < 0$. This can be dealt with similarly as 
in Case 1. 

\smallskip
\noindent
Hence, we have $L(A_1) = \textbf{uv}^t y_1$. 
	
By the previous argument we can show that either $L(A_2) = \textbf{j} \textbf{k}^t y_2$, 
where $\textbf{j} =((\alpha_2 x_1 + \alpha_4 x_2), (\gamma_2 x_1 + \gamma_4 x_2))^t$ and 
$\textbf{k} = (1, -\gamma)^t$ or $L(A_2) = \textbf{pq}^t y_2$. 
	
\medskip
	
\noindent
\underline{Claim 6:} $L(A_2)$ cannot be in the form $\textbf{j} \textbf{k}^t y_2$. 
	
\noindent
Suppose $L(A_2) =\textbf{jk}^t y_2$. It can be easily seen that $L(A_1 + A_2) = 
\textbf{uv}^t y_1 + \textbf{jk}^t y_2$. As $L$ is invertible, 
$\begin{bmatrix*}[r] 1 & 1 \\ -\alpha & -\gamma \end{bmatrix*}$ is also invertible. 
Let us take $(-d_1 -d_2)^t < 0$ and discuss two cases. 
	
\noindent
\underline{Case 1:} $-\gamma + \alpha > 0$.
	
\noindent
There exists $(y_1, -y_2)^t \in \reals^2$, where $y_1$ and $y_2$ are positive such that 
$(\alpha_1 x_1 + \alpha_3 x_2)(y_1) + (\alpha_2 x_1 + \alpha_4 x_2) (-y_2) = -d_1$ and  
$(\alpha_1 x_1 + \alpha_3 x_2)(y_1)(-\alpha) + (\alpha_2 x_1 + \alpha_4 x_2) (-y_2) (-\gamma) 
= -d_2$. Observe now that the first row of 
$L\Bigg( \begin{bmatrix*}[r]
y_1 x_1 & -y_2 x_1  \\
y_1 x_2 & -y_2 x_2
\end{bmatrix*} \Bigg)$ is negative. 
	
\noindent
\underline{Case 2:} $-\gamma + \alpha < 0$.
	
\noindent
There exists $(-y_1, y_2)^t \in \reals^2$, where $y_1$ and $y_2$ are positive, such that 
$(\alpha_1 x_1 + \alpha_3 x_2)(-y_1) + (\alpha_2 x_1 + \alpha_4 x_2) (y_2) = -d_1$ and  
$(\alpha_1 x_1 + \alpha_3 x_2)(-y_1)(-\alpha) + (\alpha_2 x_1 + 
\alpha_4 x_2) (y_2) (-\gamma) = -d_2$. This makes the first row of 
$L\Bigg( \begin{bmatrix*}[r]
-y_1 x_1 & y_2 x_1  \\
-y_1 x_2 & y_2 x_2
\end{bmatrix*} \Bigg)$ is negative. Thus, $L(A_2)$ must be in the form 
$\textbf{p} \textbf{q}^t y_2$. 
	
\medskip
Combining the above claims, the theorem follows.
\end{proof}

We now prove our main theorem.

\begin{theorem}\label{2x2-main-thm}
Let $L$ be an invertible linear map on $M_2$. If $L$ is an into preserver of 
$S(\reals^2_+)$, then $L(A) = XAY$, for some invertible row positive $X \in M_2$ 
and an inverse nonnegative $Y \in M_2$. 
\end{theorem}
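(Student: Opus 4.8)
The plan is to turn Theorem \ref{2x2-thm-1} into an explicit global formula for $L$, reduce the whole statement to one proportionality condition, and then verify that condition from the semipositivity hypothesis. First I would apply Theorem \ref{2x2-thm-1} to the one-parameter families $A_1=\mathbf{x}(y_1,0)^t$ and $A_2=\mathbf{x}(0,y_2)^t$ as $\mathbf{x}$ and $y_i$ vary. Matching coefficients of the standard basis pins down $L(E_{11}),L(E_{21})$ as rank-one matrices sharing the row direction $\mathbf{v}=(1,-\alpha)^t$ and $L(E_{12}),L(E_{22})$ as rank-one matrices sharing the row direction $\mathbf{q}=(-\gamma,1)^t$. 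Writing the columns of $A$ as $\mathbf{a}_1,\mathbf{a}_2$, linearity then yields
\[
L(A)=(P\mathbf{a}_1)\mathbf{v}^t+(Q\mathbf{a}_2)\mathbf{q}^t,\qquad P=\begin{bmatrix}\alpha_1 & \alpha_3\\ \gamma_1 & \gamma_3\end{bmatrix},\ Q=\begin{bmatrix}\beta_2 & \beta_4\\ \delta_2 & \delta_4\end{bmatrix}.
\]
By Theorem \ref{2x2-thm-1} both $P$ and $Q$ are nonnegative; and since $L$ is invertible, a short determinant computation (in a suitable reordering of the standard basis the matrix of $L$ is $\left[\begin{smallmatrix} P & -\gamma Q\\ -\alpha P & Q\end{smallmatrix}\right]$, of determinant $(1-\alpha\gamma)^2\det P\det Q$) shows $P,Q$ are invertible and $\alpha\gamma\neq 1$; in particular $\mathbf{v}$ and $\mathbf{q}$ are linearly independent.

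Second, I would reduce the theorem to showing $Q=\lambda P$ for some scalar $\lambda$. For a rank-one $A=\mathbf{x}\mathbf{y}^t$ the formula gives $\det L(A)=(1-\alpha\gamma)\,y_1 y_2\,\det[\,P\mathbf{x}\mid Q\mathbf{x}\,]$, so, since $\alpha\gamma\neq 1$, the map $L$ carries every rank-one matrix to a rank-one matrix exactly when $P\mathbf{x}$ and $Q\mathbf{x}$ are parallel for all $\mathbf{x}$, i.e. exactly when $Q=\lambda P$. Once this holds, $L(A)=P\,A\,\left[\begin{smallmatrix}1 & -\alpha\\ -\lambda\gamma & \lambda\end{smallmatrix}\right]$ is of the form $XAY$, and, more usefully, hypothesis (2) of Theorem \ref{theorem-suff-cond} is satisfied; Theorem \ref{theorem-suff-cond} then at once gives that $X$ is invertible row positive and $Y$ is inverse nonnegative. (Since $X=P\geq 0$ is invertible its rows are nonzero and nonnegative, hence row positive, which forces the admissible sign branch of Theorem 2.4 of \cite{dgjjt} to be the one with $\alpha\gamma<1$.)

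Finally, the crux is the proportionality $Q=\lambda P$, which I would prove by contradiction. Suppose $P\mathbf{x}_0$ and $Q\mathbf{x}_0$ are linearly independent for some $\mathbf{x}_0>0$. The step I expect to be the main obstacle is that this non-proportionality is invisible on rank-one semipositive matrices alone: writing $L(\mathbf{x}_0\mathbf{y}^t)=y_1(P\mathbf{x}_0)\mathbf{v}^t+y_2(Q\mathbf{x}_0)\mathbf{q}^t$ with $P\mathbf{x}_0,Q\mathbf{x}_0\geq 0$, one checks that for every admissible $\mathbf{y}$ the image still has a positive column or the sign pattern of an inverse-nonnegative matrix, so it stays semipositive even though it may have rank two. (Indeed there exist invertible maps, such as $A\mapsto[\,\mathbf{a}_1\mid \mathrm{diag}(1,2)\mathbf{a}_2\,]$, that preserve semipositivity of all rank-one matrices yet are not of the standard form.) Hence I must feed $L$ a rank-\emph{two} semipositive matrix. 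Using that $P,Q\geq 0$ are invertible but not proportional, I would set $\mathbf{b}_1=P^{-1}\mathbf{c}_1$, $\mathbf{b}_2=Q^{-1}\mathbf{c}_2$, so that $L(B)=[\,\mathbf{c}_1\mid \mathbf{c}_2\,]\left[\begin{smallmatrix}1 & -\alpha\\ -\gamma & 1\end{smallmatrix}\right]$ while $B=[\,P^{-1}\mathbf{c}_1\mid Q^{-1}\mathbf{c}_2\,]$, and then steer $\mathbf{c}_1,\mathbf{c}_2$ just outside the positive orthant (so that the column cone of $L(B)$ misses it) while the differing actions of $P^{-1}$ and $Q^{-1}$ pull $\mathbf{b}_1,\mathbf{b}_2$ to straddle it. With the small-parameter adjustment already used in the proof of Theorem \ref{2x2-thm-1} (shrinking one parameter until $L(B)$ acquires a nonpositive row or a nonpositive inverse), $L(B)$ fails to be semipositive while $B$ is semipositive, contradicting hypothesis (1). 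This forces $Q=\lambda P$ and completes the argument.
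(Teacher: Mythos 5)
Your setup and reduction are correct and essentially retrace the paper's: Theorem \ref{2x2-thm-1} plus linearity does give the global formula $L(A)=(P\mathbf{a}_1)\mathbf{v}^t+(Q\mathbf{a}_2)\mathbf{q}^t$ with $P,Q\geq 0$; invertibility of $L$ does force $P,Q$ invertible and $\alpha\gamma\neq 1$ (your block determinant $(1-\alpha\gamma)^2\det P\det Q$ is right); the theorem does reduce to the single identity $Q=\lambda P$, after which Theorem 2.4 of \cite{dgjjt} (or Theorem \ref{theorem-suff-cond}) finishes. Your parenthetical observation that rank-one semipositive inputs cannot detect $Q\neq\lambda P$ is also correct, and it is precisely why the paper's own proof (Claims 1 and 2 of Theorem \ref{2x2-main-thm}) feeds $L$ rank-two matrices. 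The gap is the crux itself: you never construct the rank-two witness. The sentence ``steer $\mathbf{c}_1,\mathbf{c}_2$ just outside the positive orthant while the differing actions of $P^{-1}$ and $Q^{-1}$ pull $\mathbf{b}_1,\mathbf{b}_2$ to straddle it'' is a hope, not an argument, and the ``small-parameter adjustment'' you import from Theorem \ref{2x2-thm-1} has no parameter to shrink in your $B$.

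Moreover, no single choice of the kind you describe works for all $P,Q$, which is why the paper splits the crux into two claims with two different constructions. Take $\alpha=\gamma=0$, so $L(B)=[\mathbf{c}_1\mid\mathbf{c}_2]$ and $B=[P^{-1}\mathbf{c}_1\mid Q^{-1}\mathbf{c}_2]$. If $P=I$ and $Q=\left[\begin{smallmatrix}0&1\\1&0\end{smallmatrix}\right]$, then opposite-side choices such as $\mathbf{c}_1=(-\epsilon,1)^t$, $\mathbf{c}_2=(1,-\delta)^t$ leave $L(B)$ semipositive for all small $\epsilon,\delta$ (its semipositivity test reads $1-\epsilon\delta>0$), so ``just outside $\Rightarrow$ the column cone misses the orthant'' is false; and the antipodal choice $\mathbf{c}_2=-\mathbf{c}_1$ puts $P^{-1}\mathbf{c}_1$ and $Q^{-1}\mathbf{c}_2$ in the same quadrant, so $B$ is never semipositive. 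This case is handled by the paper's Claim 1: take the columns of $B$ orthogonal to the first rows of $P$ and $Q$, so that $L(B)$ has a zero row while $B$ or $-B$ is semipositive according to the sign of $\det B$. If instead $P=I$ and $Q=\mathrm{diag}(1,2)$, the same-side choice fails (a diagonal $Q^{-1}$ never pulls $\mathbf{b}_2$ across the orthant, so $B$ acquires a negative row), and what works is exactly the antipodal choice --- the paper's Claim 2, which produces an inverse-nonnegative, hence minimally semipositive, $B$ whose image is rank one with a sign-changing column, hence not semipositive. So the paper's two-step structure --- first prove each row of $Q$ is proportional to the corresponding row of $P$ (zero-row trick), then prove the two ratios coincide --- is not an artifact of exposition: it, or some genuine substitute, is the content your proposal is missing, and without it the assertion ``this forces $Q=\lambda P$'' is unproved.
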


\begin{proof}
By Theorem \ref{2x2-thm-1}, we have $L(A_1) = \begin{bmatrix*}[r]
(\alpha_1 x_1 + \alpha_3 x_2) & -\alpha (\alpha_1 x_1 + \alpha_3 x_2) \\
(\gamma_1 x_1 + \gamma_3 x_2) & -\alpha (\gamma_1 x_1 + \gamma_3 x_2)
\end{bmatrix*} y_1$ and $L(A_2) = \\ 
\begin{bmatrix*}[r]
-\gamma(\beta_2 x_1 + \beta_4 x_2) &  (\beta_2 x_1 + \beta_4 x_2) \\ 
-\gamma(\delta_2 x_1 + \delta_4 x_2) & (\delta_2 x_1 + \delta_4 x_2)
\end{bmatrix*} y_2$, where $\begin{bmatrix*}[r]
\alpha_1 & \alpha_3 \\
\gamma_1 & \gamma_3
\end{bmatrix*}$ and $\begin{bmatrix*}[r]
\beta_2 & \beta_4 \\
\delta_2 & \delta_4
\end{bmatrix*}$ are nonnegative matrices, $\alpha \geq 0$ and $\gamma \geq 0$.
	
\medskip
\noindent
\underline{Claim 1:} $(\alpha_1, \alpha_3)^t$ and $(\beta_2, \beta_4)^t$ are linearly dependent. 
	
\noindent
Suppose not. We consider two cases. Let $B = \begin{bmatrix*}[r]
						-\alpha_3 & \beta_4\\
						\alpha_1 & -\beta_2
					           \end{bmatrix*}$. If $\text{det}(B) < 0$, then 
$B \in S(\reals^2_+)$ and the first row of $L(B)$ is zero. If instead 
$\text{det}(B) > 0$, then $- B \in S(\reals^2_+)$ and the 
first row of $L(- B)$ is zero. Thus,  $(\alpha_1, \alpha_3)^t$ and 
$(\beta_2, \beta_4)^t$ are linearly dependent. 
This proves the claim.

\medskip
\noindent	
Similarly, we can show that $(\gamma_1, \gamma_3)^t$ and $(\delta_2, \delta_4)^t$
are linearly dependent. Notice that 
$(\beta_2, \beta_4)^t = \theta (\alpha_1, \alpha_3)^t$ 
and $(\delta_2, \delta_4)^t = \lambda (\gamma_1, \gamma_3)^t$, where $\theta > 0$ 
and $\lambda > 0$. 
	
\medskip
	
\noindent
\underline{Claim 2:} $\theta = \lambda$. 
	
\noindent
Observe that \\
$L \Bigg( \begin{bmatrix*}[r]
a & b \\
c & d
\end{bmatrix*} \Bigg)  = 
\begin{bmatrix*}[r]
\alpha_1 a + \alpha_3 c -\gamma \theta (\alpha_1 b + \alpha_3 d) &
-\alpha(\alpha_1 a + \alpha_3 c) + \theta (\alpha_1 b + \alpha_3 d) \\
\gamma_1 a + \gamma_3 c -\gamma \lambda (\gamma_1 b + \gamma_3 d) &
-\alpha(\gamma_1 a + \gamma_3 c) + \lambda (\gamma_1 b + \gamma_3 d) 
\end{bmatrix*}$. \\

\noindent	
Suppose $\theta \neq \lambda $. We contradict the $\lambda > \theta$ case; the 
other case is similar. Say $\lambda > \theta$, and let $d_1, d_2 > 0$. We can find 
$(u_1, u_2)^t \in \reals^2$ and $(v_1 \ v_2)^t \in \reals^2$ such that 
$\alpha_1 u_1 + \alpha_3 u_2 = -d_1, \ 
\gamma_1 u_1 + \gamma_3 u_2 = d_2, \ \alpha_1 v_1 + \alpha_3 v_2 = \frac{d_1}{\theta}$ 
\ and $\gamma_1 v_1 + \gamma_3 v_2 = - \frac{d_2}{\lambda}$. It can be easily verified 
that $B = \begin{bmatrix*}[r] u_1 & v_1 \\ u_2 & v_2 \end{bmatrix*}$ is a minimally semipositive matrix. 
However, 
$L(B) = \begin{bmatrix*}[r]
(1+ \gamma)(-d_1) & (1+\alpha)d_1 \\
(1+ \gamma)d_2 & (1+\alpha)(-d_2)
\end{bmatrix*} \notin S(\reals^2_+)$. 
	
Finally, we get $L \Bigg( \begin{bmatrix*}[r]
a & b \\
c & d
\end{bmatrix*} \Bigg) = \begin{bmatrix*}[r]
\alpha_1 & \alpha_3 \\
\gamma_1 & \gamma_3
\end{bmatrix*} \begin{bmatrix*}[r]
a & b \\
c & d
\end{bmatrix*} \begin{bmatrix*}[r]
1 & -\alpha \\
-\gamma \theta & \theta 
\end{bmatrix*}$. 
Observe that $X = \begin{bmatrix*}[r]
\alpha_1 & \alpha_3 \\
\gamma_1 & \gamma_3
\end{bmatrix*}$ is an invertible row positive matrix and $Y = \begin{bmatrix*}[r]
1 & -\alpha \\
-\gamma \theta & \theta 
\end{bmatrix*}$ is inverse nonnegative (see Theorem 2.4, \cite{dgjjt}).
\end{proof}

\begin{remark}\label{L(A_i)-general-case}
Before proceeding further, let us write down the matrix representation of $L(A_i)$ 
in the general case, where $A_i = \textbf{xy}_{i}^t$, with 
$\textbf{x} = (x_1,x_2, \ldots, x_m)^t \in \reals^m$ and 
$\textbf{y}_{i} = (0,\ldots, y_i,  \ldots, 0 )^t \in \reals^n$. Let $L$ be a linear 
map on $M_{m,n}$. We then have 
\begin{equation*}
L(A_i) = 
\begin{bmatrix*}[c]
s_1 & s_2 & \cdots & s_n\\
s_{n+1} & s_{n+2}& \cdots & s_{2n}\\
\vdots & \cdots &\cdots & \vdots \\
s_{(m-1)n+1} & s_{(m-1)n+2}& \cdots & s_{mn}
\end{bmatrix*} y_i,
\end{equation*}
where $s_k =l_{k,i} x_1 + l_{k,n+i}x_2+ \cdots +l_{k,(m-1)n +i} x_m$ and $l_{i,j}, 
\ i = 1, \ldots, mn, \ j =1, \ldots, mn$ are fixed real numbers.	
\end{remark}

The above representation is obtained similar to the $n=2$ case by taking 
the usual basis $E_{ij}$ of $M_{m,n}$.

\begin{remark}\label{mx2}
Letting $n = 2$ in REMARK \ref{L(A_i)-general-case}, we observe that the arguments of 
Theorems \ref{2x2-thm-1} and \ref{2x2-main-thm} carry over for invertible maps on 
$M_{m,2}, m \geq 2$ that preserve semipositivity. Recall that when $m \geq 2$, an 
$m \times 2$ matrix is semipositive if and only if every $2 \times 2$ submatrix is semipositive. 
We thus have the following theorem.
\end{remark}

\begin{theorem}\label{mx2-thm}
Let $L$ be an invertible linear map on $M_{m,2}$, where $m \geq 2$. Then, 
\begin{enumerate}
\item \textit{rank} $(L(A_i)) = 1$, where $A_i$ is as above. 
\item $L(A) = XAY$, for some invertible row positive $X \in M_m$ 
and an inverse nonnegative $Y \in M_2$.
\end{enumerate}
\end{theorem}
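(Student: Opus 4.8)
The plan is to transport the proofs of Theorems \ref{2x2-thm-1} and \ref{2x2-main-thm} from $M_2$ to $M_{m,2}$, using throughout the criterion recorded in Remark \ref{mx2}: for $m \geq 2$, an $m\times 2$ matrix lies in $S(\reals^2_+,\reals^m_+)$ if and only if each of its $2\times 2$ row-submatrices does. (Here $L$ is the invertible semipositivity preserver of Remark \ref{mx2}.) Thus, to certify that a constructed matrix $B$ is semipositive it suffices to check all its $2\times 2$ row-submatrices, while to certify that $L(B)$ is \emph{not} semipositive it suffices to exhibit a single offending $2\times 2$ row-submatrix; the latter includes the easy cases where $L(B)$ has a zero row or a nonzero nonpositive row, either of which already destroys semipositivity. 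Every test matrix $B$ arising in the two-dimensional proofs has the shape $\begin{bmatrix}\mathbf{p} & y_2\mathbf{x}\end{bmatrix}$ with $\mathbf{x}>0$ and $y_2>0$; since the second column is a positive multiple of $\mathbf{x}>0$, applying $B$ to a test vector with sufficiently large second coordinate shows such a $B$ is automatically semipositive, so the only real content lies in locating the offending submatrix of $L(B)$.

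For part (1) I would fix a special semipositive matrix $A_i=\mathbf{x}\mathbf{y}_i^t$ with $\mathbf{x}>0$ and rerun the claims of Theorem \ref{2x2-thm-1}, with the two-dimensional vectors $\mathbf{x},\mathbf{u},\mathbf{p}$ reread as $m$-dimensional vectors and the relevant coefficient data supplied by Remark \ref{L(A_i)-general-case} with $n=2$. The claims that force a row of $L(B)$ to become zero or nonpositive transfer verbatim, since a single such row kills semipositivity of an $m\times 2$ matrix. The outcome is that $\mathrm{rank}\,(L(A_i))=1$ and, more precisely, $L(A_1)=\mathbf{u}\mathbf{v}^t y_1$ and $L(A_2)=\mathbf{p}\mathbf{q}^t y_2$ with $\mathbf{u}=X\mathbf{x}$, $\mathbf{p}=X'\mathbf{x}$, $\mathbf{v}=(1,-\alpha)^t$, $\mathbf{q}=(-\gamma,1)^t$, for nonnegative matrices $X,X'\in M_m$ and scalars $\alpha,\gamma\ge 0$, exactly paralleling Theorem \ref{2x2-thm-1}.

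For part (2) I would follow Theorem \ref{2x2-main-thm}. Applying its Claim~1 argument one row index at a time shows that, for each $k$, the $k$-th rows of $X'$ and of $X$ are proportional, say $(X')_{k\bullet}=\theta_k (X)_{k\bullet}$ with $\theta_k>0$; the Claim~2 argument, applied to a minimally semipositive $B\in M_{m,2}$ whose image $L(B)$ carries a non-semipositive $2\times 2$ row-submatrix, then forces all $\theta_k$ to a common value $\theta$, whence $X'=\theta X$. Because the first-column matrices $\mathbf{c}_1\mathbf{e}_1^t$ and the second-column matrices $\mathbf{c}_2\mathbf{e}_2^t$ together span $M_{m,2}$, linearity gives, for $A=\begin{bmatrix}\mathbf{c}_1 & \mathbf{c}_2\end{bmatrix}$, that $L(A)=(X\mathbf{c}_1)\mathbf{v}^t+\theta(X\mathbf{c}_2)\mathbf{q}^t=XAY$ with $Y=\begin{bmatrix}1 & -\alpha\\ -\gamma\theta & \theta\end{bmatrix}$. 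Finally, invertibility of $L$ forces both $X$ and $Y$ invertible; an invertible nonnegative matrix has no zero row and is therefore row positive, and $Y$ is inverse nonnegative by Theorem 2.4 of \cite{dgjjt}.

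The main obstacle I anticipate is the two steps that invoke minimal semipositivity, namely Claim~1 of Theorem \ref{2x2-thm-1} and Claim~2 of Theorem \ref{2x2-main-thm}. In the $2\times 2$ setting these rest on a single $2\times 2$ inverse being nonpositive, whereas in $M_{m,2}$ minimal semipositivity is governed by a $2\times m$ nonnegative left inverse. The work there is to show that, after the small-$y_2$ perturbation, some pair of rows of $L(B)$ forms a $2\times 2$ submatrix with nonpositive inverse, so that the submatrix criterion of Remark \ref{mx2} certifies $L(B)\notin S(\reals^2_+,\reals^m_+)$; once this is secured, everything else is a routine re-indexing of the two-dimensional arguments.
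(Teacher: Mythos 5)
Your proposal is correct and is essentially the paper's own proof: the paper establishes Theorem \ref{mx2-thm} precisely by the observation in Remark \ref{mx2} that, setting $n=2$ in Remark \ref{L(A_i)-general-case}, the arguments of Theorems \ref{2x2-thm-1} and \ref{2x2-main-thm} carry over verbatim once one uses the criterion that an $m\times 2$ matrix is semipositive if and only if every $2\times 2$ row-submatrix is. Your additional observations (positive-column test matrices stay semipositive, a single offending row or row pair kills semipositivity of $L(B)$, and the minimal-semipositivity steps are where the genuine adaptation lies) are exactly the details the paper leaves implicit.
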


\medskip
\noindent
\subsubsection{\textbf{The general case: $m \geq n$}} \hspace{\fill}\\

We assume that $m \geq n$. As in the $2 \times 2$ case, the first step is to prove 
that a rank one semipositive matrix of the form $A_i$ (as described in 
REMARK \ref{L(A_i)-general-case}) does not get mapped 
to a minimally semipositive matrix. For simplicity, we present a proof in the 
$3 \times 3$ case, which extends to any $n \times n$ matrix. The reduction to 
$m \times n$ case follows as an $m \times n$ matrix is semipositive if and only 
if every $n \times n$ submatrix is semipositive. Recall that a square matrix $A$ 
is said to be reducible if it is permutation similar to a matrix of the form 
\begin{equation*}
\begin{bmatrix}
A_{11} & A_{12}\\
0 & A_{22}
\end{bmatrix},
\end{equation*} where $A_{11}$ and $A_{22}$ are square and nonzero matrices. 
$A$ is said to be irreducible if it is not reducible. Two matrices 
$A$ and $B$ are said to be permutation equivalent if there exist permutation 
matrices $P$ and $Q$ such that $A = PBQ$. A square matrix $A$ is said to be 
partially decomposable if it is permutation equivalent to a matrix of the form 
given above. Otherwise $A$ is said to be fully indecomposable. One can show that 
$A$ is fully indecomposable if and only if $PA$ is irreducible for every permutation 
matrix $P$. If $A$ is partially decomposable, then $A$ is permutation equivalent to 
\begin{equation*}
\begin{bmatrix}
A_{11} & A_{12} & \cdots & A_{1k}\\
0 & A_{22} & \cdots & A_{2k}\\
\vdots  & \cdots & \ddots & \vdots\\
0 & \cdots & 0 & A_{kk}
\end{bmatrix}, 
\end{equation*}
where the $A_{ii}$ are either $1 \times 1$ zero matrices or are fully indecomposable. 
We shall use the following notions and results from \cite{crj1}.

\noindent
\begin{itemize}

\item (Corollary 1, \cite{crj1}) If an $n \times n$ sign pattern matrix $B$ is fully 
indecomposable, then the following are equivalent.
\begin{enumerate}
\item $B$ is inverse nonnegative.
\item $B$ is inverse positive.
\end{enumerate}
\item (Theorem 2, \cite{crj1}) Suppose $B$ is an $n \times n$ decomposable 
sign pattern matrix in the following block form 
$B = 
\begin{bmatrix*}[c]
B_{11} & B_{12} & \ldots & B_{1k}\\
0 & B_{22} & \ldots & B_{2k}\\
0 & \ldots & & B_{3k}\\
\vdots & \vdots & & \vdots \\
0 & \ldots & & B_{kk}
\end{bmatrix*}$, where each $B_{ii}$ is square and either fully indecomposable or 
a $1 \times 1$ zero matrix. Then $B$ is inverse nonnegative if and only if 
\begin{enumerate}
\item each (fully indecomposable) sign pattern matrix 
$B_{ii}, \ i =1, \ldots, k$ is inverse nonnegative.
\item no submatrix of the form $\begin{bmatrix*}[r]
                                  B_{i,i+1} & \ldots & B_{ij}
                                 \end{bmatrix*}$ or 
$\begin{bmatrix*}[c]
   B_{ij}\\
   \vdots \\
   B_{j-1,j}
\end{bmatrix*}$ is nonnegative and nonzero, $1 \leq i < j \leq k$.
\end{enumerate}
\end{itemize}

\begin{theorem}\label{non-MSP}
Let $L$ be an invertible linear map on $M_3$ that preserves semipositive matrices. 
Consider the rank one matrix $A_1$. If $A_1$ is semipositive, then 
$L(A_1)$ cannot be a minimally semipositive matrix. 
\end{theorem}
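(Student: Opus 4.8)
The plan is to argue by contradiction, lifting the perturbation technique of Claim~1 of Theorem~\ref{2x2-thm-1} to the $3 \times 3$ setting. After absorbing signs and rescaling (minimal semipositivity of $L(A_1) = y_1 L(\textbf{x}\textbf{e}_1^t)$ is unaffected by a positive scalar), I may assume $\textbf{x} = (x_1,x_2,x_3)^t > 0$ and $y_1 = 1$, so that $A_1 = \textbf{x}\textbf{e}_1^t$ with $\textbf{e}_1 = (1,0,0)^t$. Suppose toward a contradiction that $L(A_1)$ is minimally semipositive. Since $L(A_1)$ is square, possession of a nonnegative left inverse (Definition~\ref{defn-2}) forces $L(A_1)$ to be invertible with $L(A_1)^{-1} \geq 0$. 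Throughout I will use two elementary facts: an invertible matrix $M$ is semipositive if and only if there is a $w > 0$ with $M^{-1}w > 0$; and any matrix having a nonpositive row, or an invertible nonpositive inverse, fails to be semipositive.

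First I would introduce the rank one perturbations $B_{\epsilon} = \textbf{x}\textbf{w}^t$ with $\textbf{w} = (-1,\epsilon_2,\epsilon_3)^t$ and small $\epsilon_2,\epsilon_3 > 0$. Because $\textbf{x} > 0$, choosing $v > 0$ with $\textbf{w}^t v > 0$ shows each $B_{\epsilon}$ is semipositive, so $L(B_{\epsilon}) \in S(\reals^3_+)$ by hypothesis. By linearity $L(B_{\epsilon}) = -L(A_1) + \epsilon_2 L(\textbf{x}\textbf{e}_2^t) + \epsilon_3 L(\textbf{x}\textbf{e}_3^t) \to -L(A_1)$ as $\epsilon \to 0$, and $-L(A_1)$ has the nonpositive inverse $-L(A_1)^{-1}$; hence $-L(A_1)$ is itself not semipositive. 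The entire difficulty is to show that this non-semipositivity persists for small $\epsilon$, which contradicts $L(B_{\epsilon}) \in S(\reals^3_+)$.

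To control the perturbation I would split on the fine structure of the inverse-nonnegative matrix $L(A_1)$, using the sign-pattern results of \cite{crj1} recalled above. If the sign pattern of $L(A_1)$ is fully indecomposable, then by Corollary~1 of \cite{crj1} it is inverse positive, so $L(A_1)^{-1} > 0$ entrywise and $-L(A_1)$ has a strictly negative inverse; by continuity of matrix inversion $L(B_{\epsilon})^{-1}$ stays entrywise negative for all sufficiently small $\epsilon$, so $L(B_{\epsilon})$ is not semipositive, a contradiction. If instead $L(A_1)$ is partially decomposable, Theorem~2 of \cite{crj1} puts it, up to permutation equivalence, in block upper triangular form with inverse-nonnegative diagonal blocks and off-diagonal blocks obeying the stated sign constraints. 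These constraints pin down, in each block configuration, a row of $-L(A_1)$ along which the known signs of the fixed entries of $L(\textbf{x}\textbf{e}_2^t)$ and $L(\textbf{x}\textbf{e}_3^t)$ dictate the outcome: depending on those signs, a suitable small $\epsilon_2,\epsilon_3 > 0$ either preserves nonpositivity of $L(B_{\epsilon})^{-1}$ or drives a full row of $L(B_{\epsilon})$ nonpositive, once more forcing $L(B_{\epsilon}) \notin S(\reals^3_+)$.

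I expect the partially decomposable case to be the main obstacle. In the fully indecomposable case strict negativity of the inverse makes non-semipositivity robust, so continuity settles matters; but when $-L(A_1)^{-1}$ has zero entries a small perturbation can a priori re-enter the semipositive cone, so one cannot argue by continuity alone. This is exactly the $f \geq 0$ versus $f < 0$ dichotomy of Claim~1 of Theorem~\ref{2x2-thm-1}, now multiplied by the several admissible block patterns available in dimension three; the delicate, computation-heavy sign bookkeeping across these patterns is the heart of the argument and is the reason the complete $3 \times 3$ verification is relegated to the appendix. Once the contradiction is reached in every case, $L(A_1)$ cannot be minimally semipositive and is therefore redundantly semipositive; the same conclusion transfers to $M_{m,3}$, and, via the characterization that an $m \times n$ matrix is semipositive if and only if each of its $n \times n$ submatrices is, to the general $m \times n$ setting.
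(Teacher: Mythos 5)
Your overall strategy coincides with the paper's: assume $L(A_1)$ is minimally semipositive, hence (being square) inverse nonnegative; split according to whether $L(A_1)$ is fully indecomposable, invoking Corollary 1 and Theorem 2 of \cite{crj1}; and perturb $-L(A_1)$ by small positive multiples of the semipositive matrices $M_j = L(\textbf{x}\textbf{e}_j^t)$, $j = 2,3$, where $\textbf{e}_j$ is the $j$th standard basis vector of $\reals^3$. Your fully indecomposable case is complete and is exactly the paper's Case 1. The gap is in the partially decomposable case, which you rightly call the heart of the argument but then resolve only by asserting a dichotomy: that small $\epsilon_2,\epsilon_3>0$ can always be chosen so that either $L(B_\epsilon)^{-1}$ stays nonpositive or some full row of $L(B_\epsilon)$ becomes nonpositive. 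That dichotomy is false, so the sketch cannot be completed as stated.

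Concretely, suppose $L(A_1)=\begin{bmatrix*}[r] a & -b & 0\\ -c & d & 0\\ 0 & 0 & e\end{bmatrix*}$ with $a,b,c,d,e>0$ and $ad-bc>0$ (this is inverse nonnegative and partially decomposable), and suppose both $M_2$ and $M_3$ have the sign pattern of the semipositive matrix $\begin{bmatrix*}[r] 1 & 0 & -1\\ 0 & 1 & -1\\ 1 & 1 & 1\end{bmatrix*}$. Write $P=\epsilon_2 M_2+\epsilon_3 M_3$, so $L(B_\epsilon)=-L(A_1)+P$. For every small $\epsilon_2,\epsilon_3>0$, each row of $L(B_\epsilon)$ contains a positive entry (its $(1,2)$, $(2,1)$ and $(3,1)$ entries are $b$, $c$ and $P_{31}>0$), so no row goes nonpositive; and from $(X+P)^{-1}=X^{-1}-X^{-1}PX^{-1}+O(\lVert P\rVert^2)$ with $X=-L(A_1)$, the $(1,3)$ entry of $L(B_\epsilon)^{-1}$, which vanishes at $P=0$, equals $-\frac{dP_{13}+bP_{23}}{e(ad-bc)}+O(\lVert P\rVert^2)>0$ because $P_{13},P_{23}<0$, so the inverse does not stay nonpositive either. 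Thus neither horn of your dichotomy is available, and your argument yields no contradiction. The matrix $L(B_\epsilon)$ is in fact still not semipositive, but for a finer reason: if $x>0$ and $L(B_\epsilon)x>0$, the third row forces $x_3=O(\epsilon)(x_1+x_2)$, and then rows one and two force $x_2/x_1>(a-\epsilon)/b$ and $x_2/x_1<c/(d-\epsilon)$ simultaneously, impossible for small $\epsilon$ since $ad>bc$. This is precisely the mechanism of the paper's Cases 2 and 3: the perturbed matrix can be neither minimally semipositive nor redundantly semipositive, because its leading $2\times 2$ block is inverse negative and no $3\times 2$ submatrix is semipositive. That sign-pattern analysis of $L(B_\epsilon)$ itself, rather than of its inverse or of its rows, is the missing step, and without it the partially decomposable case — the only hard case — remains open in your proposal.
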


\begin{proof}
Suppose $A_1 = \begin{bmatrix*}[r]
              x_1\\x_2\\x_3
             \end{bmatrix*} \begin{bmatrix*}[r]
                            y_1 & 0 & 0
                           \end{bmatrix*}$ is a rank one semipositive 
matrix, so that $x_i > 0$ for $i = 1,2,3$ and $y_1 > 0$. If $L(A_1)$ is minimally semipositive, 
then it is inverse nonnegative. We then have the following cases.

\medskip
\noindent
\underline{Case 1:} Suppose $L(A_1)$ is fully indecomposable.

\noindent
In this case, it follows from Corollary 1, \cite{crj1} that $L(A_1)$ will have a 
positive inverse. Consider the semipositive matrix 
$B = \begin{bmatrix*}[r]
 x_1\\x_2\\x_3                                                                
\end{bmatrix*} \begin{bmatrix*}[r]
               -1 & y_2 & 0
              \end{bmatrix*}$, where $y_2 > 0$. 
Then $L(B) = L \Bigg(\begin{bmatrix*}[r]
                      x_1\\x_2\\x_3 
                     \end{bmatrix*} \begin{bmatrix*}[r]
                                        -1 & 0 & 0
                                       \end{bmatrix*} \Bigg) + 
L \Bigg(\begin{bmatrix*}[r]
   x_1\\x_2\\x_3 
  \end{bmatrix*} \begin{bmatrix*}[r]
                 0 & y_2 & 0
                \end{bmatrix*} \Bigg)$. The inverse of the first term is negative, 
whereas the second term is semipositive. It is now possible to choose a $y_2$, 
sufficiently small, so that the inverse of $ L(B)$ is nonpositive. This forces 
$L(B) \notin S(\reals^3_+)$.

\medskip
\noindent
\underline{Case 2:} Suppose $L(A_1)$ is partly decomposable and has the form \\ 
$L(A_1) = \begin{bmatrix*}[r]
         a & -b & -f_1\\
        -c & d & -f_2\\
         0 & 0 & e
        \end{bmatrix*}$, where $a, b, c, \ \mbox{and} \ d$ are positive, $f_1 \ 
\mbox{and} \ f_2$ are nonnegative and $ad-bc > 0$. 

\noindent
Consider the matrix $B$ as in Case 1. Choose $y_2 > 0$ and sufficiently small so that 
$L(B) = \begin{bmatrix*}[r]
         -a_1 & b_1 & f_1^{\ast}\\
         c_1 & -d_1 & f_2^{\ast}\\
         {\ast} & {\ast} & -e_1
        \end{bmatrix*}$, where $a_1, b_1, c_1, d_1 \ \mbox{and} \ e_1$ are 
positive and $f_1^{\ast} \ \mbox{and} \ f_2^{\ast}$ are nonnegative. In this case, it 
is easy to check that $L(B)$ cannot be minimally semipositive. 
$L(B)$ cannot be a redundantly semipositive matrix as well, as no $3 \times 2$ 
submatrix is semipositive.

\medskip
\noindent
\underline{Case 3:} Suppose $L(A)$ is partly decomposable and has the form 
$L(A) = \begin{bmatrix*}[r]
         a & -f_1 & -f_2\\
         0 & d & -f_3\\
         0 & 0 & e
        \end{bmatrix*}$, where $a, d, \ \mbox{and} \ e$ are positive and $f_1, 
f_2 \ \mbox{and} \ f_3$ are nonnegative. This case can be dealt with similar to 
Case 2.

\medskip
\noindent
Combining everything, we see that $L(A_1)$ cannot be a minimally semipositive matrix.
\end{proof}

\begin{remark}
It follows from the above proof that $L(A_2)$ and $L(A_3)$ cannot be mapped to minimally 
semipositive matrices as well. Moreover, the above proof works for any $n \geq 4$ with 
appropriate modifications as well as for the rectangular case.
\end{remark}

We have thus proved that no rank one semipositive matrix of the form $A_i$ can be mapped 
to a minimally semipositive matrix. We are now ready to prove our main results for maps 
on $M_{m,n}$, when $m \geq n \geq 3$.

\begin{theorem}\label{mxn-L(A_1)-rank-one-part-1}
Let $A_1$ be the rank one matrix described earlier. If $A_1$ is semipositive and if 
$L$ is an invertible linear map on $M_{m,n}$ that preserves semipositivity, then the 
matrix 
\begin{equation*}
C_1 = \begin{bmatrix*}[c]
       l_{1,1} & l_{1,(n+1)} & & \ldots & l_{1,(m-1)n+1}\\
       l_{2,1} & l_{2,(n+1)} & & \ldots & l_{2,(m-1)n+1}\\
       \vdots & \vdots & & \ldots & \vdots \\
       l_{n,1} & l_{n,(n+1)} & & \ldots & l_{n,(m-1)n+1}     
      \end{bmatrix*}
\end{equation*} has rank one.
\end{theorem}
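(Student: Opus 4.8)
The plan is to prove the contrapositive in spirit: assuming $\mathrm{rank}(C_1)\ge 2$, I would manufacture a semipositive matrix $B$ whose image $L(B)$ has a nonpositive first row, which is impossible since such a matrix is never semipositive (if a row of $B$ is $\le 0$ entrywise then that row of $Bx$ is $\le 0$ for every $x>0$), contradicting $L\big(S(\reals^n_+,\reals^m_+)\big)\subseteq S(\reals^n_+,\reals^m_+)$. First I would fix coordinates exactly as in REMARK \ref{L(A_i)-general-case}. Since $A_1$ is a rank one semipositive matrix we have $\mathbf{x}>0$ and $y_1>0$, so after rescaling ($y_1=1$) the first row of $L(A_1)$ is precisely $(C_1\mathbf{x})^t$. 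More generally, writing a test matrix $B\in M_{m,n}$ through its columns $b_1,\dots,b_n$ and expanding $L(B)=\sum_i L(b_i e_i^t)$ by linearity (where $e_i$ is the $i$-th standard basis vector of $\reals^n$), the first row of $L(B)$ equals $\big(\sum_{i=1}^n C_i b_i\big)^t$, where $C_i\in M_{n,m}$ is the coefficient matrix reading off the first row of $L(\,\cdot\,e_i^t)$; this $C_1$ is the matrix in the statement and each $C_ib_i$ lies in $\reals^n$. Thus the whole analysis reduces to controlling the single vector $\sum_i C_i b_i\in\reals^n$, which is a linear function of the columns of $B$ valid for \emph{all} $b_i$, not just positive ones.

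The point of the hypothesis $\mathrm{rank}(C_1)\ge 2$ is that, letting $b_1$ range over $\reals^m$ while the other columns are held at a small stabilizing value, the achievable first-row vectors fill the subspace $V=\mathrm{Col}(C_1)\subseteq\reals^n$ of dimension at least two. I would therefore take $B=b_1 e_1^t+y_2\,\mathbf{w}\,e_2^t$ with $\mathbf{w}>0$ and $y_2>0$ small (remaining columns zero). Such a $B$ has a strictly positive second column, hence is semipositive for \emph{every} choice of $b_1$ (take $x=(\varepsilon,1,\varepsilon,\dots,\varepsilon)^t$, so that $Bx\approx y_2\mathbf{w}>0$ for small $\varepsilon$), even though the relevant $b_1$ will have mixed signs so that $b_1 e_1^t$ alone is not semipositive. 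Its image has first row $C_1 b_1+y_2\,C_2\mathbf{w}$, a small perturbation of an arbitrary vector of $V$.

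The crux is then a sign analysis on $V$. If $V$ meets the open negative orthant $-(\reals^n_+)^\circ$, the argument is immediate: choose $b_1$ with $C_1 b_1<0$ strictly and a small enough $y_2$ so that the perturbation $y_2 C_2\mathbf{w}$ preserves strict negativity; then $L(B)$ has a strictly negative first row and cannot be semipositive, a contradiction. The substance lies in the remaining situation, where the (at least two-dimensional) subspace $V$ touches the nonpositive orthant only along vectors having some zero coordinates. Here I would argue as in Claim 3 of Theorem \ref{2x2-thm-1} and in the $3\times 3$ computation: pick $b_1$ so that $C_1 b_1\le 0$, with its negative coordinates carrying the independence supplied by $\mathrm{rank}(C_1)\ge 2$, and then choose the stabilizing column (and, if needed, further columns assembled from the coefficient data) so that the perturbation is nonpositive on exactly the coordinates where $C_1 b_1$ vanishes. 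Invertibility of $L$ guarantees that the relevant square coefficient submatrices are nonsingular, so the required $b_1$ and the entries of $B$ can be solved for explicitly, mirroring the way the matrices $B$ are built by hand in the lower-dimensional cases; the facts from \cite{crj1} on inverse-nonnegative, fully indecomposable sign patterns, together with Theorem \ref{non-MSP} (which rules out $L(A_1)$ being minimally semipositive and so restricts the admissible sign pattern of $C_1$), are what keep this bookkeeping consistent.

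In every case one reaches a semipositive $B$ with $L(B)$ having a nonpositive first row, so $L(B)\notin S(\reals^n_+,\reals^m_+)$, the desired contradiction. Hence $\mathrm{rank}(C_1)\le 1$; and since $L(A_1)$ is itself semipositive its first row is nonzero, so $\mathrm{rank}(C_1)=1$. I expect the main obstacle to be precisely the boundary case of the previous paragraph: when $V$ contains nonpositive vectors only on its boundary, one must engineer the positive perturbation columns to drive the free (zero) coordinates of the first row in the nonpositive direction \emph{without} destroying semipositivity of $B$. This is the combinatorial heart of the result and the reason the general argument must follow the explicit $3\times 3$ verification rather than a one-line dimension count.
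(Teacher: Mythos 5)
Your reduction is the right one, and it matches the paper's setup: writing $B$ through its columns, the first row of $L(B)$ is $\sum_{i=1}^n C_i b_i$; a matrix with a strictly negative row is never semipositive; and one small positive column keeps $B$ semipositive no matter what the other columns are. The gap is in how you produce a strictly negative first row. Your construction confines the reachable first-row vectors to $V=\mathrm{Col}(C_1)$ plus a small perturbation $y_2\,C_2\mathbf{w}$, and your treatment of the hard case assumes you can at least find $b_1$ with $C_1b_1\le 0$, $C_1b_1\neq 0$. But when $2\le \mathrm{rank}(C_1)<n$, the subspace $V$ may meet the closed nonpositive orthant only at the origin --- for instance $V=\mathrm{span}\{(1,-1,0)^t,(0,1,-1)^t\}\subset\reals^3$ satisfies $V\cap(-\reals^3_+)=\{0\}$ --- so no such $b_1$ exists, and no small perturbation of a vector of $V$ can be $\le 0$ either. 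The bookkeeping you invoke (sign patterns from \cite{crj1}, Theorem \ref{non-MSP}, and nonsingularity of ``relevant square coefficient submatrices,'' which invertibility of $L$ does not by itself guarantee) does not address this situation; indeed you flag it yourself as the unresolved ``combinatorial heart.''

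The paper closes exactly this hole with a different and much simpler device, which is the one genuinely missing idea in your proposal: surjectivity of $L$ forces the stacked $n\times mn$ matrix $C=[C_1\,|\,C_2\,|\,\cdots\,|\,C_n]$ to have full row rank $n$ (otherwise the first row of $L(B)$ would lie in a fixed proper subspace of $\reals^n$ for every $B$, contradicting ontoness). Hence if $\mathrm{rank}(C_1)=k\ge 2$, one augments $C_1$ by at most $n-k$ further blocks $C_j$ to reach rank $n$, solves $[C_1\,|\,C_{j_1}\,|\,\cdots]\,p=z$ \emph{exactly} for an arbitrary strictly negative $z$, distributes $p$ over at most $n-k+1$ columns of $B$, and --- precisely because $k\ge 2$ --- still has a spare column left to host the small positive column $y\,\mathbf{x}$ that makes $B$ semipositive. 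The first row of $L(B)$ is then $z$ plus a term of order $y$, strictly negative for small $y>0$, giving the contradiction with no sign analysis at all; ranks $n, n-1, \ldots, 2$ are ruled out by this descending scheme, and $C_1\neq 0$ since $L(A_1)$ is semipositive. Without the full-row-rank observation your argument cannot handle the example above; with it, the sign-pattern machinery, Theorem \ref{non-MSP}, and the appeal to \cite{crj1} all become unnecessary for this particular theorem.
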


\begin{proof}
Suppose $C_1$ has rank $n$. For $z = - (z_1, \ldots, z_n)^t < 0$, choose a vector 
$q = (q_1, \ldots, q_m)^t$ such that $C_1 q = z$. Consider the matrix 
$B =  \begin{bmatrix*}[c]
          q_1 & x_1 y_2 & 0 & \ldots & 0\\
          q_2 & x_2 y_2 & 0 & \ldots & 0\\
          \vdots & \vdots & \vdots & \ldots & \vdots \\
          q_m & x_m y_2 & 0 & \ldots & 0
\end{bmatrix*}$, where $y_2 > 0$. Then, $B$ is semipositive as it contains a positive 
column (Recall that $A_1$ is semipositive). We then have 
$L(B) = \begin{bmatrix*}[c]
         -z_1 & -z_2 & \ldots & -z_n\\
         {\ast} & {\ast} & \ldots & {\ast}\\
         \vdots & \vdots & \vdots & \vdots \\
         {\ast} & {\ast} & \ldots & {\ast} 
\end{bmatrix*} \ + \ y_2 L \Bigg (\begin{bmatrix*}[c]
                                  0 & x_1 & 0 & \ldots & 0\\
                                  0 & x_2 & 0 & \ldots & 0\\ 
                                  \vdots & \vdots & \vdots & \ldots & \vdots \\
                                  0 & x_m & 0 & \ldots & 0
                                 \end{bmatrix*} \Bigg)$. 
The second term in the above expression is semipositive as $L$ preserves 
semipositivity. Choosing $y_2$ sufficiently small, it is possible to make $L(B)$ not 
semipositive. Thus, $C_1$ cannot have rank $n$.

\medskip
\noindent
Consider the matrix $C_k = \begin{bmatrix*}[c]
                           l_{1,k} & l_{1,(n+k)} & & \ldots & l_{1,(m-1)n+k}\\
                           l_{2,k} & l_{2,(n+k)} & & \ldots & l_{2,(m-1)n+k}\\
                           \vdots & \vdots & & \ldots & \vdots \\
                           l_{n,k} & l_{n,(n+k)} & & \ldots & l_{n,(m-1)n+k}     
                          \end{bmatrix*}$. Let us consider the $n \times mn$ matrix 
$C = [C_1 | C_2 | \ldots | C_n]$. Since $L$ is an invertible map, $C$ has rank $n$. 
Suppose $C_1$ has rank $n-1$. Assume without loss of generality that $[C_1|C_2]$ has rank $n$. For $z = - (z_1, \ldots, z_n)^t < 0$, choose a $p \in \reals^{2m}$ such that 
$[C_1|C_2] p = z$. 
Consider the semipositive matrix 
$B = \begin{bmatrix*}[c]
     p_1 & p_{m+1} & x_1 y_3 & 0 & \ldots & 0\\
     p_2 & p_{m+2} & x_2 y_3 & 0 & \ldots & 0\\
    \vdots & \vdots & \vdots & \vdots & \ldots & \vdots\\
    p_m & p_{2m} & x_m y_3 & 0 & \ldots & 0
\end{bmatrix*}$, where $y_3 > 0$. By choosing $y_3$ sufficiently small, it 
is possible to make $L(B)$ not semipositive (the argument is similar to the one 
used in the previous step). Thus, $C_1$ cannot have rank $n-1$. Proceeding analogously, 
we see that $C_1$ cannot have rank $n-2$ and so on, thereby proving that the rank of 
$C_1$ is $1$.
\end{proof} 

We now prove that if $L$ is an invertible map on $M_{m,n}$ that preserves 
semipositivity, then $L(A_i)$ has rank one for each $i =1, \ldots , n$.

\begin{theorem}\label{mxn-L(A_1)-rank-one-part-2}
Let $L$ be an invertible map on $M_{m,n}$ that preserves semipositivity. Suppose for 
each $i = 1, \ldots, n, \ A_i$ is semipositive. Then, $L(A_i)$ has rank one for each 
$i =1, \ldots, n$.	
\end{theorem}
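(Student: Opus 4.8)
The plan is to bootstrap from Theorem \ref{mxn-L(A_1)-rank-one-part-1}. Recall from Remark \ref{L(A_i)-general-case} that $L(A_i) = y_i\, S_i(\mathbf{x})$, where the $(r,c)$ entry of $S_i(\mathbf{x})$ is $\sum_{p=1}^m l_{(r-1)n+c,\,(p-1)n+i}\,x_p$, a linear function of $\mathbf{x}$. Theorem \ref{mxn-L(A_1)-rank-one-part-1} shows that the coefficient matrix $C_1$ governing the first row of $L(A_1)$ has rank one, and the identical argument applied to each index $i$ shows the analogous matrix $C_i$ for $L(A_i)$ has rank one. First I would observe that the only feature of the first row used in that proof is that a strictly negative row forces non-semipositivity; since this is true of any row, re-running the same construction while targeting the $r$-th row of the test matrix $L(B)$ shows that, for every $r$, the $r$-th-row coefficient matrix of $L(A_i)$ has rank one. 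Consequently each row of $L(A_i)$ is confined to a fixed one-dimensional subspace $\langle \mathbf{a}_i^{(r)}\rangle$ of $\reals^n$ as $\mathbf{x}$ varies; that is, the $r$-th row of $L(A_i)$ equals $\ell_{i,r}(\mathbf{x})\,(\mathbf{a}_i^{(r)})^t$ for some linear functional $\ell_{i,r}$.

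It then remains to prove that the directions $\mathbf{a}_i^{(1)},\dots,\mathbf{a}_i^{(m)}$ are mutually proportional, which is exactly the statement that $L(A_i)$ has rank one. Here I would first record what the earlier results already supply: by Theorem \ref{non-MSP} (and the remark following it) $L(A_i)$ is semipositive but never minimally semipositive, hence has no nonnegative left inverse. As in Theorem \ref{2x2-thm-1}, this rules out the inverse-nonnegative sign patterns and forces the redundant configuration (no strictly positive row, together with a distinguished nonnegative-column structure), as exhibited by the reduced form $\begin{bmatrix} a & -b \\ c & -d\end{bmatrix}y_1$ appearing there. This restricts the admissible sign patterns but does not by itself bound the rank, so a further argument is needed.

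The hard part will be the alignment step, i.e.\ upgrading ``each row lies on a line'' to ``all rows proportional.'' I would argue by contradiction, in direct analogy with Claims~5 and~6 of Theorem \ref{2x2-thm-1} and Claim~2 of Theorem \ref{2x2-main-thm}: if two rows of $L(A_i)$ had linearly independent directions (or, after the normalisation above, two column-entries with mismatched ratios), then, using the invertibility of $L$ to solve the corresponding linear system, I would construct a semipositive (indeed minimally semipositive) test matrix $B$ obtained as a perturbation of $A_i$ in the remaining columns, and show that $L(B)$ acquires a strictly negative row and hence fails to be semipositive, contradicting that $L$ preserves $S(\reals^n_+,\reals^m_+)$. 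To keep the construction square I would invoke the reduction used throughout this section, namely that an $m\times n$ matrix is semipositive if and only if each of its $n\times n$ submatrices is, so that it suffices to run the construction on $n\times n$ submatrices (the computation for which follows the $3\times 3$ template deferred to the appendix). Carrying out this sign-control argument simultaneously for all rows is the genuine obstacle; once it is done, proportionality of the rows gives $\mathrm{rank}\,L(A_i)=1$ for every $i$, completing the proof.
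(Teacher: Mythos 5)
Your first step is sound and matches the paper: re-running the argument of Theorem \ref{mxn-L(A_1)-rank-one-part-1} row by row does show that each row of $L(A_i)$ is a fixed direction vector scaled by a linear functional of $\mathbf{x}$, which is exactly the normal form the paper reaches (rows $P_j(1,\alpha_{j,1},\ldots,\alpha_{j,n-1})$ in its Step 3). The problem is that everything after that point in your proposal is a plan rather than a proof, and you say so yourself: the ``alignment step'' that you defer as ``the genuine obstacle'' is precisely where the entire content of the paper's proof lies. Two concrete ingredients are missing. First, the paper's Step 2: before any sign analysis can be run, one must show that $L(A_i)$ contains a \emph{positive column} (Lemma \ref{positive-column-existence} in the appendix, an eight-case construction of test matrices). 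Your substitute for this --- importing from Theorem \ref{2x2-thm-1} that $L(A_i)$ is redundantly semipositive with ``no strictly positive row, together with a distinguished nonnegative-column structure'' --- does not transfer: in the $2\times 2$ case the positive column falls out of the sign pattern almost for free, but for $n\geq 3$ it is a substantial separate lemma, and without it the normalization $L(A_1)=[P_j\,|\,\alpha_{j,k}P_j]$ that your alignment step presupposes is not available.

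Second, and more seriously, your uniform strategy for alignment --- assume two rows have independent directions and derive a contradiction by exhibiting a non-semipositive image of a semipositive test matrix --- does not match how the paper actually closes the argument, and in the hardest case it cannot work as stated. The paper splits on the direction matrix $E$ of the $\alpha_{j,k}$: when $E$ is semipositive (Lemma \ref{positive-column-1}) or $-E$ is semipositive (Lemma \ref{positive-column-2}), contradiction arguments of the kind you describe do force the rows to align. But when neither $E$ nor $-E$ is semipositive (Lemma \ref{positive-column-3}), there is no contradiction to be had from test matrices built out of $A_1$ alone; instead the paper brings in the \emph{second} rank-one matrix $A_2$, uses the positive column of $L(A_2)$, solves $L(\,\cdot\,)$ for a suitable preimage $p$, and performs a joint perturbation in two columns to conclude that $E=0$ outright --- i.e.\ the case genuinely occurs and yields rank one directly, rather than being ruled out. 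This cross-use of $L(A_2)$ against $L(A_1)$ is the key idea your proposal lacks, and without it (or a worked-out replacement) the proof does not go through.
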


\begin{proof}
We will prove the result for $A_1$. A similar argument works for $i = 2, \ldots, n$. 
We indicate the steps below, where each successive step assumes the 
previous one. We have already proved that the theorem 
when $n =2$. Recall that $L(A_1)$ is a redundantly semipositive matrix.  
The argument presented below has been verified for $n = 3$; since the calculations are 
very lengthy, we are not including them here and an explanation is included in the 
appendix when $n = 3$. Before proceeding further, let us denote by $P_1, \ldots, P_m$ 
the following numbers:

\noindent
\begin{itemize}
\item $P_1:= l_{1,1} x_1 + l_{1,n+1} x_2 + \ldots + l_{1,(m-1)n+1} x_m$ 
\item $P_2:= l_{n+1,1} x_1 + l_{n+1,n+1} x_2 + \ldots + l_{n+1,(m-1)n+1} x_m$
\item \ldots 
\item \ldots
\item $P_m:= l_{(m-1)n+1,1} x_1 + l_{(m-1)n+1,n+1} x_2 + \ldots + l_{(m-1)n+1,(m-1)n+1} x_m$.
\end{itemize}

\begin{enumerate}

\item Step 1: The first step is Theorem \ref{mxn-L(A_1)-rank-one-part-1}.

\medskip

\item Step 2: Deduce that $L(A_1)$ contains a positive column. This involves several 
steps and the calculations are involved and lengthy even in the $n = 3$ case. See the Appendix for explanation.

\medskip

\item Step 3: Using the expressions $P_1, \ldots, P_m$, Steps 1 and 2 and assuming 
that the first column of $L(A_1)$ is positive, we write $L(A_1)$ as \\
$L(A_1) = \begin{bmatrix*}[c]
           P_1 & \alpha_{1,1} P_1 & \ldots & \alpha_{1,n-2} P_1 & \alpha_{1,n-1} P_1\\
           P_2 & \alpha_{2,1} P_2 & \ldots & \alpha_{2,n-2} P_2 & \alpha_{2,n-1} P_2\\
           \vdots & \vdots & \ldots & \vdots & \vdots \\
           P_m & \alpha_{m,1} P_m & \ldots & \alpha_{m,n-2} P_m & \alpha_{m,n-1} P_m
         \end{bmatrix*}$, where $\alpha_{i,j} \in \reals$.

\medskip

\item Step 4: Consider the matrix $E:= \begin{bmatrix*}[c]
\alpha_{1,1} & \ldots & \alpha_{1,n-2} & \alpha_{1,n-1}\\
\alpha_{2,1} & \ldots & \alpha_{2,n-2} & \alpha_{2,n-1}\\
\vdots & \vdots & \ldots & \vdots \\
\alpha_{m,1} & \ldots & \alpha_{m,n-2} & \alpha_{m,n-1}
\end{bmatrix*}$. If the matrix $E$ is semipositive, then $L(A_1)$ has rank one. Therefore 
$L(A_1) = \begin{bmatrix*}[c]
           P_1\\ P_2\\ \vdots\\ P_m
          \end{bmatrix*} \begin{bmatrix*}[r]
                         1 & \alpha_{1,1} & \ldots & \alpha_{1,n-1}
                        \end{bmatrix*}$.

\medskip

\item Step 5: If the matrix $- E$ is semipositive, whereas $E$ is not, then again 
$L(A_1)$ has rank one and 
$L(A_1) = \begin{bmatrix*}[c] 
P_1\\ P_2\\ \vdots\\ P_m
\end{bmatrix*} \begin{bmatrix*}[r]
1 & \alpha_{1,1} & \ldots & \alpha_{1,n-1}
\end{bmatrix*}$.

\medskip

\item Step 6: Verify that $L(A_2)$ has a positive column. If both $E$ and $- E$ 
are not semipositive, then $L(A_1)$ has rank one. The proof of this goes as follows. 
Assuming that the 
second column of $L(A_2)$ is positive, we have  
$L(A_2) = \begin{bmatrix*}[c]
\beta_{1,1} Q_1 &  Q_1 & \ldots & \beta_{1,n-1} Q_1 \\
\beta_{2,1} Q_2 & Q_2 & \ldots & \beta_{2,n-1} Q_2 \\
\vdots & \vdots & \ldots & \vdots \\
\beta_{m,1} Q_m & Q_m &\ldots & \beta_{m,n-1} Q_m  
\end{bmatrix*}$. Choose a vector $p \in \reals^m$ such that 
$L \Bigg (\begin{bmatrix*}[r]
p_1 & 0 & \ldots & 0\\
p_2 & 0 & \ldots & 0\\
\vdots & \vdots & \ldots & \vdots\\
p_m & 0 & \ldots & 0
\end{bmatrix*} \Bigg) = \begin{bmatrix*}[c]
1 & \alpha_{1,1} & \alpha_{1,2} & \ldots & \alpha_{1,n-1}\\
1 & \alpha_{2,1} & \alpha_{2,2} & \ldots & \alpha_{2,n-1}\\
\vdots & \vdots & \vdots & \ldots & \vdots\\
1 & \alpha_{m,1} & \alpha_{m,2} & \ldots & \alpha_{m,n-1}
\end{bmatrix*}$. If $A_2$ is semipositive, then the 
matrix 
$\begin{bmatrix*}[c]
-p_1 & 0 & \ldots & 0\\
-p_2 & 0 & \ldots & 0\\
\vdots & \vdots & \ldots & \vdots\\
-p_m & 0 & \ldots & 0
\end{bmatrix*} + A_2$ is semipositive and so is its image under $L$. Note that 
it is is possible to choose a $y_2 > 0$ su that the first column of the image of the 
above matrix under $L$ is negative. 
Let $q \in \reals^m$ be such that 
$L \Bigg(\begin{bmatrix*}[r]
p_1 & 0 & \ldots & 0\\
p_2 & 0 & \ldots & 0\\
\vdots & \vdots & \ldots & \vdots\\
p_m & 0 & \ldots & 0
\end{bmatrix*} - y_2 \begin{bmatrix*}[c]
0 & q_1 & 0 & \ldots & 0\\
0 & q_2 & 0 & \ldots & 0\\
\vdots & \vdots & \vdots & \ldots & 0\\
0 & q_m & \vdots & \ldots & 0
\end{bmatrix*}\Bigg)$ equals  
$\begin{bmatrix*}[c]
1 - \beta_{1,1} y_2 & \alpha_{1,1} - y_2  & \ldots & \alpha_{1,n-1} - \beta_{1,n-1} y_2 \\
1 - \beta_{2,1} y_2 & \alpha_{2,1} - y_2  & \ldots & \alpha_{2,n-1} - \beta_{2,n-1} y_2 \\
\vdots & \vdots & \ldots & \vdots \\
1 - \beta_{m,1} y_2 & \alpha_{m,1} - y_2  & \ldots & \alpha_{m,n-1}- \beta_{m,n-1} y_2 
\end{bmatrix*}$. It is now possible to choose a sufficiently small $y_2 > 0$ so that the first 
column of the above matrix is positive and the matrix \\
$- \begin{bmatrix*}[c]
\alpha_{1,1}- y_2  & \ldots & \alpha_{1,n-1}- \beta_{1,n-1} y_2 \\
\alpha_{2,1} -  y_2  & \ldots & \alpha_{2,n-1}-\beta_{2,n-1} y_2 \\
\vdots  & \ldots & \vdots \\
\alpha_{m,1}-  y_2 & \ldots &  \alpha_{m,n-1}- \beta_{m,n-1} y_2
\end{bmatrix*}$ is semipositive. From the previous case, it follows that 
$\alpha_{1,1} - y_2 = \alpha_{2,1} - y_2 = \ldots = \alpha_{m,1} - y_2$, which 
in turn yields $\alpha_{1,1} = \alpha_{2,1} = \ldots = \alpha_{m,1} = 0$.
Similarly, we can show that $\alpha_{1,i} = \alpha_{2,i} = \ldots = \alpha_{m,i} = 0$ 
for $i= 2 , \cdots, n-1$. We 
finally have $L(A_1) = \begin{bmatrix*}[c]
P_1\\ P_2 \\ \vdots\\ P_m
\end{bmatrix*} \begin{bmatrix*}[r]
1 & 0 & \ldots & 0
\end{bmatrix*}$.
\end{enumerate}

This proves the theorem.
\end{proof}

\medskip

\noindent
We now prove our main result concerning the structure of an into preserver of semipositivity.

\begin{theorem}\label{mxn-main-thm}
For $m \geq n$, let $L$ be an invertible linear map on $M_{m,n}$ such that 
$L(S(\reals^n_+, \reals^m_+)) \subset S(\reals^n_+, \reals^m_+)$. Then, $L(A) = XAY$ 
for all $A \in M_{m,n}$, where $X \in M_m$ is an invertible row positive matrix and 
$Y \in M_n$ is an inverse nonnegative matrix.	
\end{theorem}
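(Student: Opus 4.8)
The plan is to deduce the result from the sufficient condition already isolated in Theorem~\ref{theorem-suff-cond}. Its hypothesis (1) is exactly our standing assumption that $L$ is an invertible into preserver of $S(\reals^n_+,\reals^m_+)$, so the whole task collapses to verifying hypothesis (2): that $L$ sends \emph{every} rank one semipositive matrix to a rank one matrix. Once this is in hand, Theorem~\ref{theorem-suff-cond} delivers $L(A)=XAY$ together with the assertions that $X$ is invertible and row positive and that $Y$ is inverse nonnegative (the latter via Theorem~2.4 of \cite{dgjjt}), which is precisely the conclusion sought. There is no circularity here, since Theorem~\ref{theorem-suff-cond} and Theorems~\ref{mxn-L(A_1)-rank-one-part-1}--\ref{mxn-L(A_1)-rank-one-part-2} are all proved without reference to the present statement.

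First I would shrink hypothesis (2) to the matrices already understood. If $\mathbf{x}\mathbf{u}^t$ is semipositive, then $(\mathbf{u}^t\mathbf{z})\mathbf{x}\in(\reals^m_+)^\circ$ for some $\mathbf{z}>0$; as $\mathbf{u}^t\mathbf{z}$ is a scalar, this forces $\mathbf{x}\in(\reals^m_+)^\circ$ or $-\mathbf{x}\in(\reals^m_+)^\circ$. Hence every rank one semipositive matrix lies in some $U_{\mathbf{x}}$ with $\mathbf{x}\in(\reals^m_+)^\circ\cup(-(\reals^m_+)^\circ)$, and after replacing $\mathbf{x}$ by $-\mathbf{x}$ if necessary we may assume $\mathbf{x}>0$. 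Writing $\mathbf{x}\mathbf{u}^t=\sum_{i=1}^{n}u_i\,\mathbf{x}\mathbf{e}_i^t$ expresses an arbitrary element of $U_{\mathbf{x}}$ as a combination of scalar multiples of the special matrices $A_i=\mathbf{x}\mathbf{y}_i^t$ of Remark~\ref{L(A_i)-general-case}, each of which is sent to a rank one matrix by Theorems~\ref{mxn-L(A_1)-rank-one-part-1} and \ref{mxn-L(A_1)-rank-one-part-2}. Thus it suffices to prove that $L$ maps the entire subspace $U_{\mathbf{x}}$ into a single rank one subspace $U_{\mathbf{p}}$.

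The heart of the matter is to show that the rank one images $L(A_1),\dots,L(A_n)$ share a common left factor, say $L(A_i)=\theta_i\,(X\mathbf{x})\,\mathbf{w}_i^t$ for scalars $\theta_i>0$, fixed vectors $\mathbf{w}_i\in\reals^n$, and a fixed $m\times m$ matrix $X$ independent of $i$. This is the several-variable analogue of Claims~1 and 2 in the proof of Theorem~\ref{2x2-main-thm}, where the identities $(\beta_2,\beta_4)^t=\theta(\alpha_1,\alpha_3)^t$, $(\delta_2,\delta_4)^t=\lambda(\gamma_1,\gamma_3)^t$ and $\theta=\lambda$ were forced. I would argue by contradiction: if two of the left factors were linearly independent, then invertibility of $L$ lets one solve for a semipositive test matrix $B\in U_{\mathbf{x}}$, built by prescribing the relevant columns exactly as in the constructions of Theorems~\ref{2x2-thm-1}, \ref{2x2-main-thm} and \ref{mxn-L(A_1)-rank-one-part-1}, whose image $L(B)$ acquires a zero or nonpositive row (or a nonpositive inverse) and so fails to be semipositive. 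Granting the common left factor, linearity gives $L(\mathbf{x}\mathbf{u}^t)=(X\mathbf{x})\big(\sum_i\theta_i u_i\mathbf{w}_i\big)^t$, a rank one matrix, whence $L(U_{\mathbf{x}})\subseteq U_{X\mathbf{x}}$. This establishes hypothesis (2) and completes the reduction to Theorem~\ref{theorem-suff-cond}.

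I expect the alignment of the left factors to be the genuine obstacle. Unlike the $A_i$ themselves, the mixed combinations $\mathbf{x}\mathbf{u}^t$ are not reachable by the rank count of Theorem~\ref{mxn-L(A_1)-rank-one-part-1} alone, and the needed contradiction must be produced by carefully chosen test matrices whose images violate semipositivity. These are exactly the constructions that become lengthy in the $3\times3$ verification deferred to the appendix; the general $m\times n$ statement is then recovered from the $n\times n$ case by the fact that an $m\times n$ matrix is semipositive if and only if each of its $n\times n$ submatrices is.
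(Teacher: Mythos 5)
Your proposal is correct, and its hard technical core coincides with the paper's, but the final assembly takes a genuinely different route. The paper does not pass through Theorem~\ref{theorem-suff-cond} at all: after Theorems~\ref{mxn-L(A_1)-rank-one-part-1} and~\ref{mxn-L(A_1)-rank-one-part-2}, its proof of Theorem~\ref{mxn-main-thm} establishes exactly your ``common left factor'' statement as Claims~1 and~2 (the coefficient vector $(l_{2,2}, l_{2,n+2}, \ldots, l_{2,(m-1)n+2})^t$ is a positive multiple of $(l_{1,1}, l_{1,n+1}, \ldots, l_{1,(m-1)n+1})^t$, and the resulting scalars $\lambda_{j,i-1}$ are independent of $j$), and then simply reads off $X$ and $Y$ from these coefficients, obtaining $L(A)=XAY$ by linearity on the spanning set of matrices with a single nonzero column, with inverse nonnegativity of $Y$ coming from Theorem~2.4 of \cite{dgjjt}. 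You instead use the alignment to verify hypothesis~(2) of Theorem~\ref{theorem-suff-cond} -- your preliminary observation that every rank one semipositive matrix lies in $U_{\mathbf{x}}$ with $\mathbf{x}>0$ (up to sign) is correct and is not made explicit in the paper -- and then invoke the rank-one-preserver machinery resting on Lautemann's theorem \cite{lau}. What your route buys is a clean logical reduction whose conclusion arrives prepackaged with all the stated properties of $X$ and $Y$; what it costs is redundancy, since once the left factors of $L(A_1),\ldots,L(A_n)$ are aligned, $L$ is already determined on a spanning set and $L(A)=XAY$ follows directly, so the appeal to \cite{lau} (and the need to rule out the transpose form there) does no extra work. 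Do note that the step you defer -- producing the alignment by contradiction with semipositivity-violating test matrices -- is precisely the content of the paper's Claims~1 and~2 (and, for the preparatory positive-column and rank-one facts, of the appendix in the $3\times 3$ case), so your proposal stands or falls with the same lengthy computations; it identifies them correctly but does not shorten them.
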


\vspace{-0.35 cm}
\begin{proof}
We know from Theorem \ref{mxn-L(A_1)-rank-one-part-2}, that $L(A_i)$ has rank one for 
each $i = 1, \ldots, n$. Suppose the first and second columns of $L(A_1)$ and $L(A_2)$, 
respectively, are positive. Let $L(A_1) = \begin{bmatrix*}[c]
                                             P_1\\P_2\\ \vdots \\ P_m
                                            \end{bmatrix*} 
\begin{bmatrix*}[r]
1 & \alpha_{1,1} & \ldots & \alpha_{1,n-1}
\end{bmatrix*}$ and $L(A_2) = \begin{bmatrix*}[c]
Q_1\\Q_2\\ \vdots \\ Q_m
\end{bmatrix*} \begin{bmatrix*}[r]
\alpha_{2,1} & 1 & \ldots & \alpha_{2,n-1}
\end{bmatrix*}$.

\medskip
\noindent
\underline{Claim 1:} The vectors $(l_{1,1}, l_{1,n+1}, \ldots, l_{1,(m-1)n+1})^t$ and 
$(l_{2,2}, l_{2,n+2}, \ldots, l_{2,(m-1)n+2})^t$ are linearly dependent. If not, then 
there will exist two vectors $u = (u_1, \ldots, u_m)^t$ an $v = (v_1, \ldots, v_m)^t$ 
in $\reals^m$ such that 
$\begin{bmatrix*}[r]
l_{2,2} & l_{2,n+2} & \ldots & l_{2,(m-1)n+2}\\
l_{1,1} & l_{1,n+1} & \ldots & l_{1,(m-1)n+1}
\end{bmatrix*} \begin{bmatrix*}[c]
               u_1 & v_1\\
               u_2 & v_2\\
               \vdots & \vdots\\
               u_m & v_m
              \end{bmatrix*} = \begin{bmatrix*}[r]
                               1 & 0\\
                               0 & 1
                              \end{bmatrix*}$. 
Since $\begin{bmatrix*}[r]
       l_{2,2} & l_{2,n+2} & \ldots & l_{2,(m-1)n+2}\\
       l_{1,1} & l_{1,n+1} & \ldots & l_{1,(m-1)n+1}
      \end{bmatrix*}$ is a rank two nonnegative matrix, we see that the matrix 
$B = \begin{bmatrix*}[c]
      u_1 & v_1 & 0 & \ldots & 0\\
      u_2 & v_2 & 0 & \ldots & 0\\
      \vdots & \vdots & \vdots & \ldots & \vdots\\
      u_m & v_m & 0 & \ldots & 0
    \end{bmatrix*}$ is an $m \times n$ semipositive matrix. However, 
$L(B) = \begin{bmatrix*}[c]
         0 & 0 & \ldots & 0\\
         {\ast} & {\ast} & \ldots & {\ast}\\
         \vdots & \vdots & \ldots & {\ast}\\
         {\ast} & {\ast} & \ldots & {\ast}
        \end{bmatrix*}$ is not semipositive. Therefore, there exists a positive real number 
$\lambda_{1,1}$ such that $(l_{2,2}, l_{2,n+2}, \ldots, l_{2,(m-1)n+2})^t =  
\lambda_{1,1} (l_{1,1}, l_{1,n+1}, \ldots, l_{1,(m-1)n+1})^t$. Proceeding in a similar way, 
it can be shown that 
$L(A_2) = \begin{bmatrix*}[c]
           \lambda_{1,1} P_1\\ \lambda_{2,1} P_2 \\ \vdots \\ \lambda_{m,1} P_m
\end{bmatrix*} \begin{bmatrix*}[r]
               \alpha_{2,1} & 1 & \ldots & \alpha_{2,n-1}
              \end{bmatrix*}$, where $\lambda_{j,1} > 0$ for $j = 1, \ldots, m$. More 
generally, it can be shown that 
$L(A_i) = \begin{bmatrix*}[c]
\lambda_{1,i-1} P_1\\ \lambda_{2,i-1} P_2 \\ \vdots \\ \lambda_{m,i-1} P_m
\end{bmatrix*} \begin{bmatrix*}[r]
\alpha_{i,1} & \alpha_{i,2} & \ldots & \alpha_{i,n}
\end{bmatrix*}$, where some $\alpha_{i,k} > 0, \ i = 3, 4, \ldots, n$.

\medskip
\noindent
\underline{Claim 2:} $\lambda_{1,i-1} = \lambda_{2,i-1} = \ldots = \lambda_{m,i-1}, \ 
i = 2, \ldots, n$.\\
\noindent
We prove that $\lambda_{1,1} = \lambda_{2,1}$ and skip the remaining 
arguments, as the idea is the same.  Assume that $\lambda_{1,1} > \lambda_{2,1}$. 
Choose positive numbers $d_1, d_2$ and form the matrix $B$ defined as \\
$\begin{bmatrix*}[c]
      l_{1,1} & l_{1,n+1} & \ldots & l_{1,(m-1)n+1}\\
      l_{n+1,1} & l_{n+1,n+1} & \ldots & l_{n+1,(m-1)n+1}\\
      \vdots & \vdots & \ldots & \vdots \\
      l_{(m-1)n+1,1} & l_{(m-1)n+1,n+1} & \ldots & l_{(m-1)n+1,(m-1)n+1}
\end{bmatrix*}^{-1} \begin{bmatrix*}[c]
                    d_1 & \frac{-d_1}{\lambda_{1,1}} & 0 & 0 & \ldots & 0\\
                    -d_2 & \frac{d_2}{\lambda_{2,1}} & 0 & 0 & \ldots & 0\\
                    0 & 0 & 1 & 0 & \ldots & 0\\
                    0 & 0 & 0 & 1 & \ldots & 0\\
                    \vdots & \vdots & \vdots & \vdots & \ddots\\
                    0 & 0 & 0 & 0 & \ldots & 1
                  \end{bmatrix*}$. It is clear that $B$ is a semipositive matrix. 
But $L(B) = \\
\begin{bmatrix*}[c]
(1-\alpha_{2,1})d_1 & (\alpha_{1,1}-\alpha_{2,1})d_1 & \ldots & (\alpha_{1,n-1}-\alpha_{2,n-1})d_1\\
-(1-\alpha_{2,1})d_2 & -(\alpha_{1,1}-\alpha_{2,1})d_2 & \ldots & -(\alpha_{1,n-1}-\alpha_{2,n-1})d_2\\
{\ast} & {\ast} & \ldots & {\ast}\\
{\ast} & {\ast} & \ldots & {\ast}\\
\vdots & \vdots & \ldots & \vdots\\
{\ast} & {\ast} & \ldots & {\ast}\\ 
\end{bmatrix*}$, which is not semipositive (notice that each $2 \times 2$ submatrix 
of the matrix formed from the first rows of $L(B)$ is not semipositive). Therefore, 
$\lambda_{1,1} \leq \lambda_{2,1}$. Similarly, it can be proved that 
$\lambda_{1,1} \geq \lambda_{2,1}$. Consequently, 
$\lambda_{1,1} = \lambda_{2,1}$. Proceeding this way, it can be seen that 
$\lambda_{1,i-1} = \lambda_{2,i-1} = \ldots = \lambda_{m,i-1}, \ 
i = 2, \ldots, n$. We finally have $L(A) = XAY$ for any $A \in M_{m,n}$, where 
$X$ and $Y$ are the matrices \\ 
$X = \begin{bmatrix*}[c]
      l_{1,1} & l_{1,n+1} & \ldots & l_{1,(m-1)n+1}\\
      l_{n+1,1} & l_{n+1,n+1} & \ldots & l_{n+1,(m-1)n+1}\\
      \vdots & \vdots & \ldots & \vdots \\
      l_{(m-1)n+1,1} & l_{(m-1)n+1,n+1} & \ldots & l_{(m-1)n+1,(m-1)n+1}
     \end{bmatrix*}$, a row positive matrix and \\
$Y = 
\begin{bmatrix*}[c]
1 & \alpha_{1,1} & \ldots & \alpha_{1,n-1}\\
\lambda_{1,1}\alpha_{2,1} & \lambda_{1,1} & \ldots & \lambda_{1,1}\alpha_{2,n-1}\\
\lambda_{1,2}\alpha_{3,1} & \lambda_{1,2}\alpha_{3,2} & \ldots & \lambda_{1,2}\alpha_{3,n}\\
\vdots & \vdots & \ldots & \vdots\\
\lambda_{1,n-1}\alpha_{n,1} & \lambda_{1,n-1}\alpha_{n,2} & \ldots & \lambda_{1,n-1}\alpha_{n,n}
\end{bmatrix*}$. Since $L$ preserves semipositivity, it follows from Theorem 2.4 of \cite{dgjjt} 
that $Y$ is inverse nonnegative.
\end{proof}

Summarising everything, we have proved the following theorem.

\begin{theorem}\label{conclusion}
Let $L$ be an invertible linear map on $M_{m,n}$ such that $L(S(\reals^n_+,\reals^m_+))  
\subset S(\reals^n_+,\reals^m_+)$ ($m \geq n \geq 2$). Then, $L(A) = XAY$ for all 
$A \in M_{m,n}$, where $X \in M_m$ is an invertible row positive matrix and 
$Y \in M_n$ is an inverse nonnegative matrix if and only if each $A_i$ that is 
semipositive gets mapped to a rank one matrix. 
\end{theorem}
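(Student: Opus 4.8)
The plan is to prove the two implications separately; the forward implication is immediate, and essentially all the content lies in the converse, which is a repackaging of the work already done.

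For the forward direction, suppose $L(A) = XAY$ with $X \in M_m$ invertible (row positive) and $Y \in M_n$ inverse nonnegative, hence invertible. Any $A_i = \mathbf{x}\mathbf{y}_i^t$ that is semipositive has rank one with $\mathbf{x} > 0$ and $\mathbf{y}_i \neq 0$, and $L(A_i) = X\mathbf{x}\mathbf{y}_i^t Y = (X\mathbf{x})(Y^t\mathbf{y}_i)^t$. Since $X$ and $Y$ are invertible and $\mathbf{x}, \mathbf{y}_i \neq 0$, both $X\mathbf{x}$ and $Y^t\mathbf{y}_i$ are nonzero, so $L(A_i)$ has rank exactly one. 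Thus every semipositive $A_i$ is sent to a rank-one matrix, which is the trivial half of the equivalence.

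For the converse, assume that every semipositive $A_i$ is mapped to a rank-one matrix. This is precisely the conclusion of Theorem \ref{mxn-L(A_1)-rank-one-part-2}, and that conclusion is the only way Theorem \ref{mxn-L(A_1)-rank-one-part-2} enters the proof of Theorem \ref{mxn-main-thm}. Consequently I would simply re-run the argument of Theorem \ref{mxn-main-thm} verbatim, using the present hypothesis in place of the invocation of Theorem \ref{mxn-L(A_1)-rank-one-part-2}. Concretely: each $L(A_i)$ is a rank-one semipositive matrix; having $n \geq 2$ columns it cannot have a nonnegative left inverse, so it is redundantly semipositive, and a rank-one semipositive matrix $\mathbf{z}\mathbf{w}^t$ necessarily has a positive column (the sign of $\mathbf{z}$ and of the relevant entry of $\mathbf{w}$ agree). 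This yields the factored, positive-column form of $L(A_i)$ recorded in Step 3 of Theorem \ref{mxn-L(A_1)-rank-one-part-2}, and feeding these factorizations into Claims 1 and 2 of Theorem \ref{mxn-main-thm} forces the outer factors to be proportional across $i$ with a single common scalar and pins down the inner coefficients. Collecting these produces $L(A) = XAY$ with $X$ the row positive invertible matrix and $Y$ the matrix displayed there; that $Y$ is inverse nonnegative then follows from Theorem 2.4 of \cite{dgjjt}, since $L$ preserves semipositivity.

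I expect no serious obstacle in this proof, precisely because the genuinely hard step has been quarantined into the hypothesis. In the unconditional Theorem \ref{mxn-main-thm} the difficulty is establishing that $L(A_i)$ has rank one at all, most of which is Step 2 of Theorem \ref{mxn-L(A_1)-rank-one-part-2} (deducing that $L(A_i)$ has a positive column), whose calculations are lengthy and are carried out in full only for $n = 3$ in the appendix. By taking the rank-one property as an assumption, Theorem \ref{conclusion} isolates the remaining purely algebraic bookkeeping, so the only point demanding care in the converse is the common-proportionality argument (Claim 2 of Theorem \ref{mxn-main-thm}), which is handled by exhibiting the minimally semipositive test matrices $B$ whose images under $L$ fail to be semipositive unless all the scalars coincide.
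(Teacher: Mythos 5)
Your proposal is correct and takes essentially the same route as the paper: the forward direction is the immediate factorization $L(A_i) = (X\mathbf{x})(Y^t\mathbf{y}_i)^t$, and the converse is precisely what the paper means by ``summarising everything,'' namely re-running the proof of Theorem \ref{mxn-main-thm} with the rank-one hypothesis substituted for the appeal to Theorem \ref{mxn-L(A_1)-rank-one-part-2}. Your direct observation that a rank-one semipositive matrix must have a positive column (so that the factored form with positive outer factor is available) correctly supplies what the paper otherwise extracts from Step 2 of Theorem \ref{mxn-L(A_1)-rank-one-part-2}, so the argument is complete.
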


\subsection{The proper cones case} \hspace{\fill}\\

We begin this section with useful results needed in subsequent sections. These 
include the preserver properties of $S(K_1,K_2)$ and $MS(K_1,K_2)$ under a specific 
map and the existence of a basis for $M_{m,n}$ from either of the above sets. Recall 
that all the cones are assumed to be proper.

\bigskip
\noindent
\subsubsection{\textbf{Useful results}} \hspace{\fill}\\

We begin with the following result known as a Theorem of the Alternative.

\begin{theorem}(Theorem 2.8, \cite{chs})\label{alternative}
For proper cones $K_1$ and $K_2$ in $\reals^n$ and $\reals^m$, respectively, and 
an $m \times n$ matrix $A$, one and only one of the following
alternatives holds.\\
(a) There exists $x \in K_1$ such that $A x \in K_2^\circ$. \\
(b) There exists $0 \neq y \in K_2^*$ such that $- A^t y \in K_1^*$.
\end{theorem}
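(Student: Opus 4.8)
The plan is to prove this as a standard \emph{theorem of the alternative}, deriving it from the separating hyperplane theorem for convex sets. The statement has two halves: that (a) and (b) cannot hold simultaneously, and that at least one of them must hold; establishing both yields the ``one and only one'' conclusion. The ``not both'' half is a short inner-product computation, while the ``at least one'' half is where separation of convex sets enters.

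For mutual exclusivity, I would suppose both (a) and (b) hold, with $x \in K_1$ satisfying $Ax \in K_2^{\circ}$ and $0 \neq y \in K_2^{\ast}$ satisfying $-A^t y \in K_1^{\ast}$. First I would record the basic fact that for a proper cone $K$, any $w \in K^{\circ}$ and any $0 \neq y \in K^{\ast}$ satisfy $\langle w, y \rangle > 0$: if the pairing vanished, then since $w$ is interior one could perturb $w$ in the direction $-y$ and remain in $K$, forcing $\langle \cdot, y\rangle$ strictly negative there and hence $y = 0$. Applying this with $w = Ax$ gives $\langle Ax, y\rangle > 0$. On the other hand, $\langle Ax, y\rangle = \langle x, A^t y\rangle = -\langle x, -A^t y\rangle \leq 0$ because $x \in K_1$ and $-A^t y \in K_1^{\ast}$. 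The contradiction $0 < \langle Ax, y\rangle \leq 0$ rules out both holding.

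For the converse, assume (a) fails, so that the convex cone $C := A(K_1)$ is disjoint from the nonempty open convex set $K_2^{\circ}$. I would invoke the separating hyperplane theorem, and (after possibly negating $y$) produce $0 \neq y \in \reals^m$ and a scalar $c$ with $\langle Ax, y\rangle \leq c \leq \langle w, y\rangle$ for all $x \in K_1$ and all $w \in K_2^{\circ}$. Because $C$ is a cone through the origin, the left inequality forces $\langle Ax, y\rangle \leq 0$ for every $x \in K_1$ (otherwise scaling $x$ by large positive factors breaks $\leq c$), and taking $x = 0$ gives $c \geq 0$. Rewriting $\langle Ax, y\rangle = \langle x, A^t y\rangle \leq 0$ for all $x \in K_1$ yields $-A^t y \in K_1^{\ast}$. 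The right inequality gives $\langle w, y\rangle \geq c \geq 0$ for all $w \in K_2^{\circ}$, and since $\overline{K_2^{\circ}} = K_2$ for a proper cone, this extends by continuity to $\langle w, y\rangle \geq 0$ for all $w \in K_2$, i.e.\ $y \in K_2^{\ast}$. As $y \neq 0$, alternative (b) holds.

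The main obstacle is arranging the separation so that the cone homogeneity of $A(K_1)$ pins the separating functional down to be nonpositive on $A(K_1)$ rather than merely bounded, and ensuring the one-sided bound on $K_2^{\circ}$ transfers to all of $K_2$; this is exactly where properness is used, through the density fact $\overline{K_2^{\circ}} = K_2$ and the strict positivity $\langle w, y\rangle > 0$ for interior $w$. A secondary point to handle carefully is that $A(K_1)$ need not be closed, but closedness is not needed here, since $K_2^{\circ}$ is open and ordinary separation of a convex set from a disjoint open convex set suffices.
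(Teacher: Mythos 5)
Your proof is correct. One structural remark first: the paper does not prove this statement at all --- it is quoted verbatim as Theorem 2.8 of the cited work of Cain, Hershkowitz and Schneider and used as a black box --- so there is no internal proof to compare against; your argument supplies the standard separating-hyperplane proof, and it is sound. The mutual-exclusivity half is right: the key fact that $\langle w, y\rangle > 0$ for $w \in K^{\circ}$ and $0 \neq y \in K^{\ast}$ is correctly reduced to the perturbation $w - \epsilon y \in K$ for small $\epsilon > 0$, which gives $\langle w - \epsilon y, y\rangle = -\epsilon \|y\|^2 < 0$, contradicting $y \in K^{\ast}$ unless $y = 0$ (your phrasing of this step is slightly garbled --- the strict negativity contradicts membership of $y$ in $K^{\ast}$, rather than directly ``forcing $y=0$'' --- but the substance is exactly right). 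The existence half is also handled correctly at the two points where such arguments usually go wrong: you separate $A(K_1)$ from the \emph{open} set $K_2^{\circ}$, so that no closedness of $A(K_1)$ is needed (and indeed linear images of proper cones need not be closed); and you use the homogeneity of the cone $A(K_1)$, together with $0 \in A(K_1)$, to upgrade the bound $\langle Ax, y\rangle \leq c$ to $\langle Ax, y\rangle \leq 0$ and $c \geq 0$, which pins down $-A^t y \in K_1^{\ast}$. The final passage from $\langle w, y\rangle \geq 0$ on $K_2^{\circ}$ to all of $K_2$ uses $\overline{K_2^{\circ}} = K_2$; note this needs both the density fact the paper itself proves (that $\overline{F^{\circ}} = \overline{F}$ for a convex set $F$ with nonempty interior) \emph{and} the topological closedness of $K_2$, which is part of properness --- you invoke properness, so this is fine, but it is worth being explicit that closedness is the ingredient consumed here.
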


It is a fairly well known result that the closure of the interior of a convex subset 
$K$ of $\reals^n$ equals the closure of $K$. We shall use this in the proofs later on. 
For completeness, we present a proof.

\begin{lemma}\label{a density argument}
Let $F$ be a convex set in $\reals^n$ with a nonempty  interior. Then, 
$\overline{F^{\circ}} = \overline{F}$.
\end{lemma}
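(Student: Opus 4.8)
The plan is to prove both inclusions of the set equality $\overline{F^{\circ}} = \overline{F}$. Since $F^{\circ} \subseteq F$, taking closures immediately gives $\overline{F^{\circ}} \subseteq \overline{F}$, so the entire content lies in the reverse inclusion $\overline{F} \subseteq \overline{F^{\circ}}$. Because $\overline{F^{\circ}}$ is closed, it suffices to show $F \subseteq \overline{F^{\circ}}$, that is, every point of $F$ is a limit of points of the interior.

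To establish $F \subseteq \overline{F^{\circ}}$, I would fix an arbitrary $x \in F$ and use the hypothesis that $F^{\circ} \neq \emptyset$ to pick a reference point $x_0 \in F^{\circ}$. The key geometric fact I would invoke is that for a convex set, the open segment joining an interior point to any point of the set (indeed of the closure) lies in the interior: for all $t \in (0,1]$, the point $x_t := (1-t)x + t\, x_0$ belongs to $F^{\circ}$. Granting this, letting $t \to 0^{+}$ gives $x_t \to x$ with $x_t \in F^{\circ}$ for every positive $t$, so $x \in \overline{F^{\circ}}$. Since $x \in F$ was arbitrary, $F \subseteq \overline{F^{\circ}}$, and taking closures yields $\overline{F} \subseteq \overline{\overline{F^{\circ}}} = \overline{F^{\circ}}$, completing the argument.

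The one step requiring actual justification — and the main obstacle, though a standard one — is the segment claim that $(1-t)x + t\,x_0 \in F^{\circ}$ whenever $x \in F$, $x_0 \in F^{\circ}$, and $t \in (0,1]$. I would prove this directly from the definition of interior: since $x_0 \in F^{\circ}$, there is a ball $B(x_0, r) \subseteq F$ for some $r > 0$. I then claim the ball $B(x_t, t r)$ is contained in $F$. Indeed, any point $y$ within distance $t r$ of $x_t$ can be written as a convex combination $y = (1-t)x + t\, z$, where $z = x_0 + \tfrac{1}{t}\bigl(y - x_t\bigr)$ satisfies $\lVert z - x_0 \rVert = \tfrac{1}{t}\lVert y - x_t \rVert < r$, so $z \in B(x_0, r) \subseteq F$; convexity of $F$ (with $x \in F$ and $z \in F$) then gives $y \in F$. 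Hence $B(x_t, t r) \subseteq F$, so $x_t \in F^{\circ}$.

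I expect no serious difficulty beyond carefully writing the convex-combination identity and the norm estimate in the segment claim; the topological bookkeeping (closedness of $\overline{F^{\circ}}$, idempotence of closure) is routine. The hypothesis that $F^{\circ}$ is nonempty is used exactly once but essentially, namely to supply the anchor point $x_0$ and the radius $r$; without it the statement fails, so I would make sure the proof visibly depends on it.
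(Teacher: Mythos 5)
Your proof is correct and follows essentially the same route as the paper: both reduce to showing $\overline{F} \subseteq \overline{F^{\circ}}$ by taking the segment from an interior point $x_0$ to a point of $F$ and observing that the intermediate points $(1-t)x + t\,x_0$ lie in $F^{\circ}$ and converge to $x$. The only difference is one of completeness — you actually prove the segment claim via the ball argument $B(x_t, tr) \subseteq F$, which the paper merely asserts (and indeed states slightly imprecisely, writing $a \in F$ where an interior point is needed) — so your write-up is, if anything, the more careful of the two.
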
	

\begin{proof}
We only need to prove that $\overline{F} \subset \overline{F^{\circ}}$. If $a \in F, 
b \in \overline{F}$, then the set $\{(1/n)a + (1-1/n)b: n \in \mathbb{N}\}$ is contained 
in the interior of $F$. Moreover, elements of the above set converge to $b$. Thus, 
$b \in \overline{F^{\circ}}$.
\end{proof}

The following is a well known fact concerning nonnegative matrices.

\begin{lemma} \label{(K_1,K_2)-nonnegative}
(Corollary $3.3$, \cite{fhp})
Let $K_1$ and $K_2$ be proper cones in $\mathbb{R}^n$ and $\mathbb{R}^m$, respectively, 
and $S : \mathbb{R}^n \rightarrow \mathbb{R}^m$ be a linear map such that 
$S(K_1) \subseteq K_2$. Then $S^t (K_2^*) \subseteq K_1^*$.
\end{lemma}

We now prove that given any element $v$ of a proper cone $K$, there is a subcone 
$K_1$ of $K$ which is simplicial and containing the point $v$.

\begin{lemma}\label{lemma-6}
Let $K$ be a proper cone in $\reals^n$ and $v \in K$. Then there exists an invertible 
$T \in \pi (\reals^n_+, K)$ such that $Tx = v$ for some $x \in \reals^n_+$.
\end{lemma}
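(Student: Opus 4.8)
The plan is to reduce the statement to the elementary observation that a matrix $T$ lies in $\pi(\reals^n_+, K)$ precisely when each of its columns lies in $K$, and then to build such a $T$ explicitly with $v$ appearing as a column. Indeed, since $\reals^n_+$ is the conical hull of the standard basis vectors $e_1, \dots, e_n$ and $K$ is a convex cone, one has $T(\reals^n_+) = \mathrm{cone}(Te_1, \dots, Te_n)$, so $T(\reals^n_+) \subseteq K$ holds if and only if $Te_i \in K$ for every $i$. Thus the task is to exhibit $n$ linearly independent vectors of $K$, one of which yields $v$ as a nonnegative combination. The case $v = 0$ is trivial: choose any $n$ linearly independent vectors in the (nonempty, full-dimensional) interior $K^\circ$ as the columns of $T$ and take $x = 0$, so that $Tx = 0 = v$.

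For $v \neq 0$, I would set $v_1 := v \in K$ and extend $\{v_1\}$ to a basis of $\reals^n$ while staying inside $K$. The key point is that $K^\circ$ is a nonempty open subset of $\reals^n$ and therefore cannot be contained in any proper subspace, since a proper subspace is nowhere dense. Hence, having chosen linearly independent $v_1, \dots, v_k$ with $k < n$, the subspace $\mathrm{span}(v_1, \dots, v_k)$ is proper and cannot contain $K^\circ$, so there is a vector $v_{k+1} \in K^\circ$ lying outside it; this keeps $v_1, \dots, v_{k+1}$ linearly independent and inside $K$. Iterating yields a basis $v_1, \dots, v_n \in K$.

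Taking $T := [\,v_1 \mid v_2 \mid \cdots \mid v_n\,]$, the matrix $T$ is invertible because its columns form a basis, every column lies in $K$ so $T \in \pi(\reals^n_+, K)$ by the observation above, and $Te_1 = v_1 = v$; thus $x := e_1 \in \reals^n_+$ satisfies $Tx = v$, as required. I do not anticipate a genuine obstacle here: the only step needing care is the basis extension within $K$, and this rests solely on the fact that the nonempty open set $K^\circ$ meets the complement of every proper subspace of $\reals^n$.
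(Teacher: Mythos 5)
Your proof is correct and follows essentially the same route as the paper's: extend $v$ to a basis of $\reals^n$ consisting of vectors in $K$, using the fact that $K^\circ$ is nonempty while proper subspaces of $\reals^n$ have empty interior, then take $T$ with these vectors as columns and $x = e_1$. Your version is in fact slightly tidier, since you treat $v = 0$ explicitly (the paper's linear-independence step tacitly assumes $v \neq 0$) and your induction works uniformly for all $n$, whereas the paper splits off the $n = 2$ case unnecessarily.
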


\begin{proof}
If $n=2$, then $K$ is a simplicial cone. Therefore $T(\reals^2_+)= K$ for some invertible 
$T \in M_2(\reals)$. In such a case, the result is obvious.

Let $n \geq 3$ and $v \in K$. Since $K$ is a proper cone, there exits $v_2 \in K$ which 
is linearly independent of $v$. Suppose for every $z \in K$, $z = \alpha v_2 + \beta v$ 
for some $\alpha , \beta \in \reals$. Then $K \subsetneq span \{ v , v_2  \}$, a 
proper subspace of $\reals^n$. Since such a subspace has empty interior and 
$K^{\circ} \neq \emptyset$, we get a contradiction. Thus, there exists $v_3 \in K$ such that 
$\{ v, v_2, v_3 \}$ is linearly independent. Proceeding by induction, we get a basis 
$\{ v, v_2, \ldots, v_n \}$ for $\reals^n$ such that $v, v_2, \ldots , v_n \in K$. 
Take  $T = [ v \ v_2 \cdots v_n]$. Then,  $Tx = v$, where $x = [ 1 \ 0 \cdots 0]^t 
\in \reals^n_+$. Since $K$ is a convex cone, $T \in \pi (\reals^n_+, K)$. This completes 
the proof.    
\end{proof}

Let us recall the following result from \cite{csv-1}.

\begin{theorem}\label{cone-nonnegative-sp} 
(Theorem 2.4, \cite{csv-1}) For proper cones $K_1, K_2$ in $\reals^n$, let 
$S \in \pi(K_1,K_2)$ be an invertible linear map on $\reals^n$. If a matrix $A$ is 
$K_1$-semipositive, then the matrix $B = SAS^{-1}$ is $K_2$-semipositive. Conversely, 
if the cones are self-dual and if $C$ is $K_2$-semipositive, then there exists a 
$K_1$-semipositive matrix $A$ such that $C = (S^t)^{-1} A S^t$. 
\end{theorem}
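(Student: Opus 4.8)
The plan is to deduce everything from one elementary observation about invertible nonnegative maps, after which both assertions become pure substitution. First I would record the following: if $S$ is invertible and $S(K_1) \subseteq K_2$, then $S(K_1^\circ) \subseteq K_2^\circ$. Indeed, an invertible linear map on $\reals^n$ is a homeomorphism, hence an open map, so $S(K_1^\circ)$ is an open set; since $S(K_1^\circ) \subseteq S(K_1) \subseteq K_2$ and $K_2^\circ$ is the largest open subset of $K_2$, the inclusion $S(K_1^\circ) \subseteq K_2^\circ$ follows. This is the only topological ingredient, and I expect it to be the crux of the argument; the rest is algebra of the similarity transformation.

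For the forward implication, suppose $A$ is $K_1$-semipositive, so that there is $x \in K_1^\circ$ with $Ax \in K_1^\circ$. I would take $y := Sx$. By the observation above $y \in K_2^\circ$, and moreover $By = SAS^{-1}(Sx) = S(Ax) \in S(K_1^\circ) \subseteq K_2^\circ$. Hence $y \in K_2^\circ$ witnesses that $B = SAS^{-1}$ is $K_2$-semipositive.

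For the converse, assume in addition that $K_1$ and $K_2$ are self-dual and that $C$ is $K_2$-semipositive, say $z \in K_2^\circ$ with $Cz \in K_2^\circ$. From $S \in \pi(K_1,K_2)$, Lemma \ref{(K_1,K_2)-nonnegative} yields $S^t(K_2^*) \subseteq K_1^*$; self-duality rewrites this as $S^t(K_2) \subseteq K_1$, so $S^t \in \pi(K_2,K_1)$ and $S^t$ is invertible. I would then set $A := S^t C (S^t)^{-1}$, which gives $C = (S^t)^{-1} A S^t$ as required. Applying the interior observation to $S^t$, the vector $w := S^t z$ lies in $K_1^\circ$, while $Aw = S^t C (S^t)^{-1}(S^t z) = S^t(Cz) \in S^t(K_2^\circ) \subseteq K_1^\circ$. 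Thus $w$ witnesses that $A$ is $K_1$-semipositive, completing the proof.

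The only step that requires genuine care is the interior-mapping claim of the first paragraph; should one wish to bypass the open-mapping argument, the density Lemma \ref{a density argument} combined with the Theorem of the Alternative (Theorem \ref{alternative}) offers an alternative route, since these let one pass between a boundary solution of $Ax \in K^\circ$ and an interior one. With that lemma in hand, however, both directions reduce to the single similarity computation above, so I anticipate no further difficulties.
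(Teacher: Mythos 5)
Your proof is correct, but note that there is no in-paper proof to compare it against: the paper quotes Theorem \ref{cone-nonnegative-sp} verbatim from Theorem 2.4 of \cite{csv-1} and gives no argument for it. Judged on its own, your argument is sound and self-contained. The crux is exactly the observation you isolate: an invertible linear map is a homeomorphism of $\reals^n$, so $S(K_1^\circ)$ is open and contained in $K_2$, hence contained in $K_2^\circ$. With that, the forward direction is the substitution $y = Sx$, and in the converse you correctly combine Lemma \ref{(K_1,K_2)-nonnegative} with self-duality to get $S^t(K_2) \subseteq K_1$, so that $A := S^t C (S^t)^{-1}$ satisfies $C = (S^t)^{-1} A S^t$ and has the interior witness $w = S^t z$. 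This is the natural route, and invertibility of $S$ is used exactly where it must be (openness of $S(K_1^\circ)$, equivalently $S^t y \neq 0$ for $0 \neq y \in K_2^*$).

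One minor caveat: your closing suggestion that Lemma \ref{a density argument} together with Theorem \ref{alternative} could replace the open-mapping step is too vague to stand as written. Theorem \ref{alternative} only produces a witness $x \in K_1$ (not $x \in K_1^\circ$) with $Ax \in K_2^\circ$, so one still needs a perturbation argument --- for instance, replacing $x$ by $x + \epsilon x_0$ with $x_0 \in K_1^\circ$ and $\epsilon$ small, using that $K_1 + K_1^\circ \subseteq K_1^\circ$ and that $K_2^\circ$ is open --- to recover an interior witness. Since your main proof never invokes this alternative, the gap in the aside does not affect the correctness of the proposal.
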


The following will be used subsequently. We state it without proof.

\begin{lemma}\label{lemma-1}
Let $K_1$ and $K_2$ be proper cones in $\mathbb{R}^n$ and $\mathbb{R}^m$, respectively. 
Then the following hold:
\begin{enumerate}
\item Let $Q_1 \in \pi (K_2, \reals^m_+)$ with $Q_1((K_2)^{\circ}) \subseteq 
(\reals^m_+)^{\circ}$ and an invertible $Q_2 \in \pi (K_1, \reals^n_+)$. If 
$A \in S(K_1, K_2)$, then $Q_1 A Q_2^{-1} \in S(\reals^n_+, \reals^m_+)$.

\item Let $S_1 \in \pi (\reals^m_+,K_2)$ with $S_1((\reals^m_+)^{\circ}) \subseteq 
(K_2)^{\circ}$ and an invertible $S_2 \in \pi (\reals^n_+, K_1)$. If 
$B \in S(\reals^n_+, \reals^m_+)$, then $S_1 B S_2^{-1} \in S (K_1, K_2 )$.
\end{enumerate}
\end{lemma}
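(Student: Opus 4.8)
The plan is to prove both parts directly, by producing the required interior witness and exploiting the fact that an invertible linear map is a homeomorphism of the ambient Euclidean space, hence carries interiors to interiors. The whole argument rests on two elementary topological observations, which I would record first: if $T$ is an invertible linear map on $\reals^k$ and $C \subseteq \reals^k$, then $T$ is a homeomorphism and so $T(C^{\circ}) = (TC)^{\circ}$; and the interior operation is monotone, i.e. $C \subseteq D$ forces $C^{\circ} \subseteq D^{\circ}$.

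For part (1), since $A \in S(K_1,K_2)$ I would fix $x \in K_1^{\circ}$ with $Ax \in K_2^{\circ}$ and set $w = Q_2 x$. Because $Q_2$ is invertible with $Q_2(K_1) \subseteq \reals^n_+$, the two observations give
\begin{equation*}
w = Q_2 x \in Q_2(K_1^{\circ}) = (Q_2 K_1)^{\circ} \subseteq (\reals^n_+)^{\circ}.
\end{equation*}
Then $(Q_1 A Q_2^{-1})\,w = Q_1 A Q_2^{-1} Q_2 x = Q_1(Ax)$, and since $Ax \in K_2^{\circ}$ the hypothesis $Q_1((K_2)^{\circ}) \subseteq (\reals^m_+)^{\circ}$ yields $Q_1(Ax) \in (\reals^m_+)^{\circ}$. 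Thus $w$ is an interior witness, so $Q_1 A Q_2^{-1} \in S(\reals^n_+,\reals^m_+)$.

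Part (2) is the mirror image. Given $B \in S(\reals^n_+,\reals^m_+)$, I would take $w \in (\reals^n_+)^{\circ}$ with $Bw \in (\reals^m_+)^{\circ}$ and set $z = S_2 w$. Applying the same interior argument to the invertible $S_2 \in \pi(\reals^n_+, K_1)$ gives $z \in S_2((\reals^n_+)^{\circ}) = (S_2 \reals^n_+)^{\circ} \subseteq K_1^{\circ}$. Then $(S_1 B S_2^{-1})\,z = S_1(Bw)$, and the hypothesis $S_1((\reals^m_+)^{\circ}) \subseteq (K_2)^{\circ}$ forces $S_1(Bw) \in K_2^{\circ}$, so $z$ witnesses $S_1 B S_2^{-1} \in S(K_1,K_2)$.

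I expect no genuine obstacle; the one point that needs a moment's care is why the invertible maps $Q_2$ and $S_2$ send the interior of the source cone \emph{into} the interior of the target, and this is exactly where invertibility is essential. For a merely $(K_1,\reals^n_+)$-nonnegative map one cannot guarantee that interior points map to interior points, but an invertible linear map is a homeomorphism, so it sends $K_1^{\circ}$ onto $(Q_2 K_1)^{\circ}$, and monotonicity of the interior combined with $Q_2(K_1) \subseteq \reals^n_+$ closes the inclusion. The separately assumed interior conditions on $Q_1$ and $S_1$ (which need not be invertible) are precisely what make the remaining factor of each composition land in the correct interior.
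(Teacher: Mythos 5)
Your proof is correct. There is nothing in the paper to compare it against: the authors state Lemma \ref{lemma-1} explicitly without proof, and your argument --- transporting the interior witness by the invertible factor ($Q_2$, resp.\ $S_2$), using that an invertible linear map is a homeomorphism and hence carries $K_1^{\circ}$ onto $(Q_2 K_1)^{\circ}$, then applying monotonicity of the interior and the assumed interior-preservation of $Q_1$ (resp.\ $S_1$) --- is precisely the routine verification the authors omitted, with the invertibility hypothesis used exactly where it is needed. For contrast, the companion Lemma \ref{lemma-2}, which the paper does prove, is handled there by duality via the Theorem of the Alternative (Theorem \ref{alternative}); that machinery would also work here but is unnecessary, since the explicit interior hypotheses on $Q_1$ and $S_1$ allow your direct witness computation to close the argument.
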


The following two results are similar to that of Theorem \ref{cone-nonnegative-sp} 
for $MS(K_1,K_2)$.

\begin{lemma}\label{lemma-2}
For $m > n$, let $K_1$ and $K_2$ be proper cones in $\reals^n$ and $\reals^m$, 
respectively, with $K_2$ simplicial. 

\begin{enumerate}
\item If $S$ an $T$ are invertible maps on $\reals^n$ and $\reals^m$ such that 
$S \in \pi (\reals^n_+, K_1)$ and $T (\reals^m_+) = K_2$, then 
$TAS^{-1} \in MS(K_1, K_2)$ whenever $A \in M_{m,n}$ is minimally semipositive. 
\item If $S$ an $T$ are invertible maps on $\reals^n$ and $\reals^m$ such that 
$S \in \pi (\reals^n_+, K_1^*)$ and $T ( \reals^m_+) = K_2$, then 
$T^{-1} B (S^t)^{-1}$ is minimally semipositive whenever $B \in MS(K_1, K_2)$.
\end{enumerate}
\end{lemma}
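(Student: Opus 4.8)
The plan is to verify, for each part, the two defining requirements of minimal semipositivity separately (Definition \ref{defn-2}): that the transformed matrix lies in the appropriate semipositive set, and that it possesses a suitable nonnegative left inverse. The first requirement will come almost for free from Lemma \ref{lemma-1}, so the genuine content is the construction of the left inverse, in the spirit of Theorem \ref{cone-nonnegative-sp}. The one elementary observation I will use repeatedly is that an invertible linear map $R$ with $R(C_1)\subseteq C_2$ satisfies $R(C_1^{\circ})\subseteq C_2^{\circ}$: since $R$ is a homeomorphism, $R(C_1^{\circ})=(R(C_1))^{\circ}$, which is an open set contained in $C_2$ and hence in $C_2^{\circ}$. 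In particular $T((\reals^m_+)^{\circ})=K_2^{\circ}$ and $S((\reals^n_+)^{\circ})\subseteq K_1^{\circ}$.

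For part (1), let $A$ be minimally semipositive, so $A\in S(\reals^n_+,\reals^m_+)$ and there is $C\geq 0$ with $CA=I_n$. Taking $S_1=T$ (which meets the interior hypothesis by the observation above) and $S_2=S$ in Lemma \ref{lemma-1}(2) gives $TAS^{-1}\in S(K_1,K_2)$. For the left inverse set $D:=SCT^{-1}$. Then $D(TAS^{-1})=S(CA)S^{-1}=I_n$, and using $T^{-1}(K_2)=\reals^m_+$, $C(\reals^m_+)\subseteq\reals^n_+$ and $S\in\pi(\reals^n_+,K_1)$ we get $D(K_2)=SC(\reals^m_+)\subseteq S(\reals^n_+)\subseteq K_1$. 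Thus $D\in\pi(K_2,K_1)$ is a $(K_2,K_1)$-nonnegative left inverse of $TAS^{-1}$, so $TAS^{-1}\in MS(K_1,K_2)$.

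For part (2), the first task is to convert the hypothesis $S\in\pi(\reals^n_+,K_1^*)$ into a statement about $S^t$. Applying Lemma \ref{(K_1,K_2)-nonnegative} to $S(\reals^n_+)\subseteq K_1^*$ yields $S^t((K_1^*)^*)\subseteq(\reals^n_+)^*$; since $K_1$ is a proper cone $(K_1^*)^*=K_1$ and $(\reals^n_+)^*=\reals^n_+$, so $S^t(K_1)\subseteq\reals^n_+$, with $S^t$ invertible. Now let $B\in MS(K_1,K_2)$, so $B\in S(K_1,K_2)$ with a $(K_2,K_1)$-nonnegative left inverse $E$ satisfying $EB=I_n$ and $E(K_2)\subseteq K_1$. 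Applying Lemma \ref{lemma-1}(1) with $Q_1=T^{-1}$ and $Q_2=S^t$ (both satisfy the hypotheses there, the interior condition on $T^{-1}$ coming again from the homeomorphism observation) gives $T^{-1}B(S^t)^{-1}\in S(\reals^n_+,\reals^m_+)$. For the left inverse put $F:=S^t E T$. Then $F\,T^{-1}B(S^t)^{-1}=S^t(EB)(S^t)^{-1}=I_n$, while $F(\reals^m_+)=S^t E(K_2)\subseteq S^t(K_1)\subseteq\reals^n_+$, so $F\geq 0$ is a nonnegative left inverse. Hence $T^{-1}B(S^t)^{-1}$ is minimally semipositive.

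The only real obstacle is the duality bookkeeping in part (2): one must notice that the correct left multiplier is $S^t$ (not $S$ or $S^{-1}$), which is forced by Lemma \ref{(K_1,K_2)-nonnegative} together with the reflexivity $(K_1^*)^*=K_1$. Once $S^t(K_1)\subseteq\reals^n_+$ is established, both the semipositivity conclusions and the left-inverse identities reduce to routine compositions of the three cone inclusions.
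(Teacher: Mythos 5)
Your proof is correct, and it shares the paper's skeleton: both arguments verify the two requirements of Definition \ref{defn-2} separately, and your left inverses $SCT^{-1}$ (part (1)) and $S^{t}ET$ (part (2)) are exactly the paper's construction. The difference lies in the semipositivity step, where the paper does not invoke Lemma \ref{lemma-1} at all but instead argues by contradiction using the Theorem of the Alternative (Theorem \ref{alternative}): if $TAS^{-1}\notin S(K_1,K_2)$, there is $0\neq x\in K_2^{*}$ with $-(S^{t})^{-1}A^{t}T^{t}x\in K_1^{*}$; since $S\in\pi(\reals^n_+,K_1)$ gives $S^{t}(K_1^{*})\subseteq\reals^n_+$, applying $S^{t}$ yields $-A^{t}T^{t}x\in\reals^n_+$, which by the same theorem contradicts the easily checked fact that $TA\in S(\reals^n_+,K_2)$. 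Your direct route via Lemma \ref{lemma-1}(2) is shorter and avoids the contradiction, but it needs your interior-preservation observation (an invertible $T$ with $T(\reals^m_+)=K_2$ satisfies $T((\reals^m_+)^{\circ})=K_2^{\circ}$) to discharge that lemma's hypotheses, and it leans on a lemma the paper states without proof; the paper's argument is self-contained modulo the Theorem of the Alternative. A genuine plus of your write-up is that you prove part (2) in full, including the duality bookkeeping $S^{t}(K_1)\subseteq\reals^n_+$, which the paper dismisses as ``similar'' even though the correct placement of the transpose and inverse there is precisely the delicate point.
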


\begin{proof} 
We prove only the first statement as the proof of the second statement is similar. 
Observe that $TA  \in S(\reals^n_+, K_2)$. Let $B \in M_{n,m}$ be a nonnegative 
left inverse of $A$. Then $SB T^{-1}\in \pi (K_2, K_1)$ is a left inverse of 
$T A S^{-1}$. Therefore, it is enough to prove that  $TA S^{-1} \in S(K_1, K_2)$. 
Suppose $T A S^{-1} \notin S(K_1, K_2)$. By the Theorem of the Alternative 
(Theorem \ref{alternative}), there exists $0 \neq x \in K_2^*$ such that 
$- (S^t)^{-1} A^t T^tx \in K_1^*$. We then have $-A^t T^t x \in \reals^n_+$ which 
implies that $TA \notin S(\reals^n_+, K_2)$, a contradiction. 
\end{proof}

Observe that we need not assume that the cone $K_2$ is simplicial if $m = n$, where 
we have the following result: For proper cones $K_1$ and $K_2$ in $\reals^n$, l
et $S \in \pi(\reals^n_+,K_1)$ and $T \in \pi(\reals^n_+,K_2^{\ast})$ be invertible matrices. Then, $(T^t)^{-1}AS^{-1} \in MS(K_1,K_2)$ whenever $A$ is minimally 
semipositive. Similarly, if $S \in \pi(\reals^n_+,K_1^{\ast})$ and 
$T \in \pi(\reals^n_+,K_2)$ are invertible matrices, then 
$T^{-1}B(S^t)^{-1}$ is minimally semipositive whenever $B \in MS(K_1,K_2)$. 
We skip the proof as it is similar to the above Lemma.

Before proceeding further, let us mention the following useful results that follow 
from Lemmas \ref{lemma-1} and \ref{lemma-2}.

\begin{observation}
For $A \in M_{m,n}$ and proper cones $K_1$ and $K_2$ in $\reals^n$ and 
$\reals^m$, respectively, the following hold:
\begin{enumerate}
\item There exists $B, C \in S(K_1,K_2)$ such that $A = B + C$.
\item There exists $C_1, C_2 \in MS(K_1,K_2)$ such that $A = C_1 - C_2$, 
if in addition the cone $K_2$ is simplicial when $m > n$.
\end{enumerate}
\end{observation}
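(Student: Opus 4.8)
The plan is to treat the two assertions separately: I would prove (1) by a direct two-witness construction valid over arbitrary proper cones, and (2) by reducing to the nonnegative orthants via Lemma \ref{lemma-2} (and the remark following it in the square case) and then giving an explicit $M$-matrix construction there. The organising principle throughout is that semipositivity only requires the \emph{existence} of a single interior witness, so the two summands/differands may be certified by different witnesses.

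For (1), I would first fix two linearly independent vectors $x, y \in K_1^{\circ}$; these exist because $K_1^{\circ}$ is a nonempty open subset of $\reals^n$ and $n \geq 2$. I would then define $B$ by prescribing its action on $x$ and $y$: pick any $w \in K_2^{\circ}$ and any $k \in K_2^{\circ}$, set $Bx = w$ and $By = Ay - k$, and extend $B$ linearly over a completion of $\{x,y\}$ to a basis of $\reals^n$ (agreeing with $A$ there, say). Putting $C := A - B$, one gets $Bx = w \in K_2^{\circ}$, so $B \in S(K_1,K_2)$ with witness $x$, while $Cy = Ay - By = k \in K_2^{\circ}$, so $C \in S(K_1,K_2)$ with witness $y$; and $A = B + C$. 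The point of using two distinct witnesses — which is also why a one-witness attempt fails — is that forcing both pieces through a common $x$ would require $Bx \in K_2^{\circ}$ and $(A-B)x = Ax - Bx \in K_2^{\circ}$ at once, with the two contributions pulling in opposite directions, which collides with pointedness, $K_2^{\circ} \cap (-K_2^{\circ}) = \emptyset$.

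For (2) I would first reduce to the orthants. Choosing an invertible $S \in \pi(\reals^n_+, K_1)$ (which exists by Lemma \ref{lemma-6}) and an invertible $T$ with $T(\reals^m_+) = K_2$ (available when $K_2$ is simplicial, as assumed for $m > n$; the square case $m=n$ is handled by the remark following Lemma \ref{lemma-2}), Lemma \ref{lemma-2} shows the bijection $D \mapsto TDS^{-1}$ of $M_{m,n}$ carries minimally semipositive matrices into $MS(K_1,K_2)$. Hence it suffices to decompose $A' := T^{-1}AS$ over the orthants and then apply $T(\cdot)S^{-1}$. Over the orthants I would construct the pieces by hand. Write $A' = \begin{bmatrix} A_{\mathrm{top}} \\ A_{\mathrm{bot}} \end{bmatrix}$ with $A_{\mathrm{top}} \in M_n$, and split each block into nonnegative and nonpositive parts, $A_{\mathrm{top}} = A_{\mathrm{top}}^+ - A_{\mathrm{top}}^-$ and $A_{\mathrm{bot}} = A_{\mathrm{bot}}^+ - A_{\mathrm{bot}}^-$. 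For $s$ exceeding the spectral radii of $A_{\mathrm{top}}^{\pm}$, the matrices $P_1 := sI_n - A_{\mathrm{top}}^-$ and $P_2 := sI_n - A_{\mathrm{top}}^+$ are nonsingular $M$-matrices, so $P_i^{-1} \geq 0$ (with no zero row), and $P_1 - P_2 = A_{\mathrm{top}}$. Letting $J$ be the all-ones $(m-n)\times n$ matrix, set
\[
C_1 = \begin{bmatrix} P_1 \\ A_{\mathrm{bot}}^+ + J \end{bmatrix}, \qquad
C_2 = \begin{bmatrix} P_2 \\ A_{\mathrm{bot}}^- + J \end{bmatrix},
\]
so that $C_1 - C_2 = A'$. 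Each $C_i$ has the nonnegative left inverse $[\,P_i^{-1} \mid 0\,]$ and is semipositive with witness $P_i^{-1}v$ for any $v > 0$ (the strictly positive bottom block sends every positive vector to a positive one), hence lies in $MS(\reals^n_+,\reals^m_+)$.

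The step I expect to be the main obstacle is (2), precisely the production of a $(K_2,K_1)$-nonnegative left inverse. Unlike semipositivity, admitting a nonnegative left inverse is not stable under perturbation or rescaling — if $DC = I$ with $D \geq 0$, the correction in $(I + \varepsilon DE)^{-1}D$ generically destroys nonnegativity — so the straightforward additive analogue of the trick in (1) does not transfer. The $M$-matrix block construction circumvents this by manufacturing the left inverse explicitly: placing $M$-matrices in the top $n \times n$ block makes $[\,P_i^{-1}\mid 0\,]$ an honest nonnegative left inverse regardless of the bottom block, which then frees the bottom block to realise $A_{\mathrm{bot}}$ as a difference of strictly positive blocks and thereby recover semipositivity.
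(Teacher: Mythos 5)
Your proposal is correct, but note that the paper itself writes out no proof: the Observation is stated as a direct consequence of Lemmas \ref{lemma-1} and \ref{lemma-2}, i.e.\ the intended argument is to transport decompositions over the nonnegative orthants to the pair $(K_1,K_2)$ through those lemmas. For item (2) you follow exactly that route --- reduction via $D \mapsto TDS^{-1}$ using Lemma \ref{lemma-2} when $m>n$ and the remark following it when $m=n$ --- but you additionally supply the orthant-level decomposition that the paper leaves implicit: writing the top block as the difference of the nonsingular M-matrices $P_1 = sI - A_{\mathrm{top}}^{-}$ and $P_2 = sI - A_{\mathrm{top}}^{+}$ and padding with strictly positive bottom blocks, so that $[\,P_i^{-1} \mid 0\,]$ is an explicit $(\reals^m_+,\reals^n_+)$-nonnegative left inverse and $P_i^{-1}v$ (for $v>0$) an explicit positive witness; this filling-in is genuine added value, since the paper cites no orthant result asserting that every matrix is a difference of two minimally semipositive matrices. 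For item (1) your route is genuinely different: rather than transporting an orthant decomposition through Lemma \ref{lemma-1} (which requires producing invertible matrices in $\pi(\reals^n_+,K_1)$ and $\pi(\reals^m_+,K_2)$ carrying interiors into interiors, in the spirit of Lemma \ref{lemma-6}), you build $B$ directly over the given cones by prescribing $Bx = w$ and $By = Ay - k$ on two linearly independent $x, y \in K_1^{\circ}$ and extending linearly, so that $B$ and $C = A - B$ are certified by two different witnesses. This is more elementary, bypasses the transport machinery entirely, and isolates exactly where the paper's standing hypothesis $n \geq 2$ enters: for $n = 1$ the set $S(K_1,K_2)$ is essentially $K_2^{\circ}$ (or $-K_2^{\circ}$), hence closed under addition, and item (1) is then false. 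One small imprecision, which does not affect the argument: your parenthetical blaming pointedness for the failure of a single common witness is not quite the right reason --- a common witness $x$ for $B$ and $C = A - B$ merely forces $Ax = Bx + Cx \in K_2^{\circ}$, i.e.\ that $A$ itself be semipositive with witness $x$; pointedness is not what obstructs.
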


We end this subsection by proving that $M_{m,n}$ contains a basis from $S(K_1,K_2)$ 
and $MS(K_1,K_2)$. The following result was proved recently by 
P. N. Choudhury et al \cite{prs}.

\begin{theorem} (Theorem 3.1, \cite{prs}) \label{theorem-1}
There is a basis of minimally semipositive matrices for $M_{m,n}, \ m \geq n$. 
\end{theorem}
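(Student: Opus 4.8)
The plan is to produce $mn = \dim M_{m,n}$ linearly independent minimally semipositive matrices, which then form the desired basis. The strategy rests on two facts: that $MS(\reals^n_+,\reals^m_+)$ has nonempty interior, and that perturbing a single interior point along the standard basis $\{E_{ij}\}$ of $M_{m,n}$ yields a linearly independent family that remains inside $MS(\reals^n_+,\reals^m_+)$. Recall (from the characterization stated in the introduction) that $A$ is minimally semipositive if and only if $A$ is semipositive and admits a nonnegative left inverse; since a left inverse requires full column rank, the hypothesis $m \geq n$ is exactly what is needed.

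First I would construct an interior point $A_0$. Take $B = (1+t)I_n - tJ \in M_n$, where $J$ is the all-ones matrix and $0 < t < 1/(n-1)$. By the Sherman--Morrison formula $B^{-1} = \tfrac{1}{1+t}\bigl(I_n + cJ\bigr)$ with $c>0$, so $B^{-1}$ is entrywise positive, and $B\mathbf 1 = \bigl(1-t(n-1)\bigr)\mathbf 1 > 0$ shows $B$ is semipositive. When $m=n$ set $A_0=B$; when $m>n$ set $A_0 = \begin{bmatrix} B \\ J_0 \end{bmatrix}$ with $J_0$ the $(m-n)\times n$ all-ones block. A short computation gives a strictly positive left inverse: writing $L=[L_1 \mid L_2]$ with $L_2 = \eta P$ for a small positive $P$ and $L_1 = (I_n - \eta P J_0)B^{-1}$, one has $L A_0 = I_n$, and $L_1 \to B^{-1}>0$ as $\eta \to 0$, so $L$ is entrywise positive for small $\eta$. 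Also $A_0 \mathbf 1 > 0$, so $A_0$ is semipositive.

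Next comes the openness step. Suppose $A_0$ has a strictly positive left inverse $L_0$ with $L_0 A_0 = I_n$ and $A_0 x_0 > 0$ for some $x_0 > 0$. For $A = A_0 + \Delta$ with $\Delta$ small, $I_n + L_0\Delta$ is invertible and $L := (I_n + L_0\Delta)^{-1}L_0$ satisfies $L A = I_n$ and $L \to L_0$, hence $L$ stays entrywise positive; moreover $A x_0 > 0$. Thus $A \in MS(\reals^n_+,\reals^m_+)$, so $A_0$ is an interior point. Now choose $\epsilon > 0$ small enough that $A_0 + \epsilon E_{ij} \in MS(\reals^n_+,\reals^m_+)$ for every $E_{ij}$ (finitely many conditions) and with $\epsilon \neq -\sigma$, where $\sigma = \sum_{i,j}(A_0)_{ij}$. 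If $\sum_{i,j} c_{ij}(A_0 + \epsilon E_{ij}) = 0$, then setting $s = \sum_{i,j} c_{ij}$ and reading off the $(i,j)$ entry gives $c_{ij} = -s(A_0)_{ij}/\epsilon$; summing yields $s\bigl(1 + \sigma/\epsilon\bigr) = 0$, forcing $s = 0$ and hence $c_{ij} = 0$ for all $i,j$. Therefore the $mn$ matrices $\{A_0 + \epsilon E_{ij}\}$ are linearly independent and constitute a basis of $M_{m,n}$ lying in $MS(\reals^n_+,\reals^m_+)$.

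The main obstacle is the interior-nonemptiness, particularly in the rectangular case $m > n$: one must exhibit $A_0$ possessing a \emph{strictly} positive left inverse and then verify openness through the identity $L = (I_n + L_0\Delta)^{-1}L_0$, which is what lets the nonnegativity of the left inverse survive perturbation (note that boundary examples such as $A_0 = I_n$ fail, since adding positive off-diagonal entries destroys inverse nonnegativity). Once an interior point is secured, the linear-independence computation is routine bookkeeping.
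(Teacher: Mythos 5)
Your proof is correct, and it is worth noting that the paper itself offers no proof of this statement at all: Theorem \ref{theorem-1} is simply quoted from Theorem 3.1 of \cite{prs}, so your argument is a genuinely independent, self-contained route. Each step checks out. The matrix $B=(1+t)I_n-tJ$ with $0<t<1/(n-1)$ satisfies $B\mathbf{1}=(1-t(n-1))\mathbf{1}>0$ and, by Sherman--Morrison, $B^{-1}=\tfrac{1}{1+t}\bigl(I_n+cJ\bigr)$ with $c=t/(1-t(n-1))>0$, so $B$ is semipositive with a strictly positive left inverse; your bordered left inverse $L=[L_1\mid L_2]$, $L_1=(I_n-\eta PJ_0)B^{-1}$, $L_2=\eta P$, does satisfy $LA_0=I_n$ and is entrywise positive for small $\eta$, handling the rectangular case; the identity $L=(I_n+L_0\Delta)^{-1}L_0$ correctly shows that a strictly positive left inverse and strict semipositivity both survive small perturbations, so $A_0$ is interior to $MS(\reals^n_+,\reals^m_+)$; and the independence computation for $\{A_0+\epsilon E_{ij}\}$ is sound (here $\sigma>0$, so the condition $\epsilon\neq-\sigma$ is automatic for $\epsilon>0$). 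The comparison, then, is between an explicit citation-level construction (what \cite{prs} supplies) and your perturbation argument: the former hands you a concrete list of $mn$ basis matrices, while yours proves the strictly stronger structural fact that $MS(\reals^n_+,\reals^m_+)$ has nonempty interior, from which the basis statement follows by the general principle that any full-dimensional subset of a finite-dimensional space contains a basis. That stronger fact is of independent use in this paper: combined with Lemma \ref{lemma-2} it would transfer interiority to $MS(K_1,K_2)$ (for simplicial $K_2$ when $m>n$), giving an alternative proof of part (2) of Theorem \ref{theorem-1.1} without routing through \cite{prs} at all.
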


Below is the proof that $M_{m,n}$ contains a basis from both $S(K_1,K_2)$ as well as 
$MS(K_1,K_2)$.

\begin{theorem}\label{theorem-1.1}
Given proper cones $K_1$ and $K_2$ in $\mathbb{R}^n$ and $\mathbb{R}^m$, 
respectively, the following hold:
\begin{enumerate}
\item $S(K_1, K_2)$ contains a basis for $M_{m,n}$.
		
\item $MS(K_1, K_2)$ contains a basis for $M_{m,n}$, if in addition the cone 
$K_2$ is simplicial, when $m > n$.
\end{enumerate}
\end{theorem}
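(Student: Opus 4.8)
The plan is to reduce both parts to the nonnegative-orthant case, where Theorem \ref{theorem-1} already supplies a basis, and then to push that basis forward by an invertible linear map of congruence type $A \mapsto PAQ^{-1}$. Any such map with $P, Q$ invertible is a linear isomorphism of $M_{m,n}$, hence carries a basis to a basis; the only work is to arrange that it sends the orthant class into the desired cone class. The two transport lemmas (Lemma \ref{lemma-1} and Lemma \ref{lemma-2}) are exactly the tools that do this, so the proof amounts to checking their hypotheses with invertible transport matrices.

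For part (1), I would first use Lemma \ref{lemma-6} to obtain invertible matrices $S_2 \in \pi(\reals^n_+, K_1)$ and $S_1 \in \pi(\reals^m_+, K_2)$. The key observation is that invertibility automatically yields the interior condition required in Lemma \ref{lemma-1}(2): since $S_1$ is a linear homeomorphism and $S_1(\reals^m_+) \subseteq K_2$, one has $S_1((\reals^m_+)^\circ) = (S_1(\reals^m_+))^\circ \subseteq K_2^\circ$ by monotonicity of the interior operation. Lemma \ref{lemma-1}(2) then shows that $\Phi(B) := S_1 B S_2^{-1}$ maps $S(\reals^n_+, \reals^m_+)$ into $S(K_1, K_2)$. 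Taking a basis of $M_{m,n}$ consisting of minimally semipositive (hence semipositive) matrices from Theorem \ref{theorem-1} and applying the isomorphism $\Phi$ produces a basis of $M_{m,n}$ contained in $S(K_1, K_2)$.

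For part (2), I would repeat the argument with the $MS$-transport of Lemma \ref{lemma-2}. When $m > n$ and $K_2$ is simplicial, simpliciality furnishes an invertible $T$ with $T(\reals^m_+) = K_2$, while Lemma \ref{lemma-6} gives an invertible $S \in \pi(\reals^n_+, K_1)$; by Lemma \ref{lemma-2}(1) the isomorphism $\Psi(A) := TAS^{-1}$ carries minimally semipositive matrices into $MS(K_1, K_2)$, so $\Psi$ applied to the basis of Theorem \ref{theorem-1} does the job. When $m = n$ simpliciality is unnecessary, and I would instead invoke the remark following Lemma \ref{lemma-2}, using invertible $S \in \pi(\reals^n_+, K_1)$ and $T \in \pi(\reals^n_+, K_2^*)$ and the isomorphism $A \mapsto (T^t)^{-1} A S^{-1}$, which again sends minimally semipositive matrices into $MS(K_1, K_2)$.

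The single point that needs genuine care — and the only place the argument could fail — is the interior condition: the transport map must send a semipositivity witness $x \in (\reals^n_+)^\circ$ to a witness for the target cones. This is precisely the hypothesis isolated in Lemmas \ref{lemma-1} and \ref{lemma-2}, and as noted above it is automatic once the transport matrices are invertible, because an invertible nonnegative map sends $(\reals^n_+)^\circ$ onto the interior of its image cone, which is contained in the interior of the larger cone. The remaining facts are routine: an invertible linear map preserves the basis property, and every minimally semipositive matrix is semipositive, so the basis exhibited in part (2) simultaneously witnesses part (1) in those cases, while the general case of part (1) is covered by the map $\Phi$ above.
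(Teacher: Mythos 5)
Your proposal is correct and follows essentially the same route as the paper: transport a known orthant-case basis through the invertible map $A \mapsto TAS^{-1}$, invoking Lemma \ref{lemma-1} for part (1) and Lemma \ref{lemma-2} (together with its square-case remark) for part (2). The only differences are cosmetic or are points of extra care on your side — the paper starts part (1) from the semipositive basis of \cite{dgjjt} rather than the minimally semipositive basis of Theorem \ref{theorem-1}, while your explicit justification that invertibility forces the interior condition $S_1((\reals^m_+)^{\circ}) \subseteq K_2^{\circ}$, and your separate handling of the $m = n$ case without simpliciality, are left implicit in the paper's proof.
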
 

\begin{proof}
$(1)$ From \cite{dgjjt}, we know  that $S(\reals^n_+,\reals^m_+)$ contains a basis 
for $M_{m,n}$. Let $\{ A_{ij} \} \subset S (\reals^n_+,\reals^m_+) $ be a basis 
for $M_{m,n}$. Then by Lemma \ref{lemma-1}(2), $\{ B_{ij} = T A_{ij} S^{-1} \} 
\subset S( K_1, K_2)$ will be a basis for $M_{m,n}$, where 
$S \in \pi (\reals^n_+,K_1)$ and $T \in \pi (\reals^m_+,K_2)$ are invertible matrices 
with $T ((\reals^m_+)^{\circ}) 	\subseteq (K_2)^{\circ}$.

\medskip
\noindent 
$(2)$ By Theorem \ref{theorem-1}, let $\{\widetilde{A_{ij}} \}$ be a collection of 
minimally semipositive matrices that form a basis for $M_{m,n}$. Then by 
Lemma \ref{lemma-2}, $\{\widetilde{B_{ij}} = T \widetilde{A_{ij}} S^{-1} \} 
\subset MS( K_1, K_2)$  is a basis for $M_{m,n}$, where $S \in \pi(\reals^n_+,K_1)$ 
and $T (\reals^m_+) = K_2$ are invertible matrices. 
\end{proof}

We are now in a position to tackle preservers of $S(K_1,K_2)$. Recall the following.

\begin{definition}\label{defn-4}
A linear map $L$ on $ M_{m,n}$ is an onto preserver of $\mathcal{S}$ if 
$L(\mathcal{S})= \mathcal{S}$.
\end{definition}

We shall use the following lemma in our proofs (see \cite{dieu} for details).

\begin{lemma}\label{lemma-5}
If $\mathcal{S}$ contains a basis for $M_{m,n}$, then $L$ is an onto preserver 
of $\mathcal{S}$ if and only if $L$ and $L^{-1}$ are into preservers of $\mathcal{S}$. 
\end{lemma}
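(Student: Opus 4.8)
Looking at this lemma, I need to prove an equivalence about onto preservers when $\mathcal{S}$ contains a basis for $M_{m,n}$.

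The plan is to prove both directions of the equivalence, relying on the fact that $\mathcal{S}$ contains a basis for $M_{m,n}$ to transfer the ``into'' property of the inverse into surjectivity onto $\mathcal{S}$. First I would handle the easier forward direction: suppose $L$ is an onto preserver of $\mathcal{S}$, so $L(\mathcal{S}) = \mathcal{S}$. I must show $L$ is a bijection on $M_{m,n}$ so that $L^{-1}$ makes sense. The key observation is that since $\mathcal{S}$ contains a basis $\{B_i\}$ for $M_{m,n}$ and $L(\mathcal{S}) = \mathcal{S}$, the set $L(\mathcal{S})$ also spans $M_{m,n}$; combined with linearity this forces $L$ to be surjective, hence invertible (as a linear endomorphism of a finite-dimensional space). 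Then $L^{-1}(\mathcal{S}) = L^{-1}(L(\mathcal{S})) = \mathcal{S}$, so both $L$ and $L^{-1}$ are into (indeed onto) preservers of $\mathcal{S}$.

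For the reverse direction, suppose $L$ and $L^{-1}$ are both into preservers of $\mathcal{S}$, meaning $L(\mathcal{S}) \subseteq \mathcal{S}$ and $L^{-1}(\mathcal{S}) \subseteq \mathcal{S}$. From the latter, applying $L$ to both sides gives $\mathcal{S} \subseteq L(\mathcal{S})$. Combined with $L(\mathcal{S}) \subseteq \mathcal{S}$, we obtain $L(\mathcal{S}) = \mathcal{S}$, which is precisely the statement that $L$ is an onto preserver of $\mathcal{S}$. Note that here the invertibility of $L$ is already assumed implicitly by the existence of $L^{-1}$, so no spanning argument is needed in this direction.

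The main subtlety, and the step I would be most careful about, is the forward direction where I must extract invertibility of $L$ purely from the condition $L(\mathcal{S}) = \mathcal{S}$. The hypothesis that $\mathcal{S}$ contains a basis for $M_{m,n}$ is exactly what makes this work: without it, $L$ restricted to $\mathcal{S}$ could be onto $\mathcal{S}$ while $L$ collapses directions not represented in $\mathcal{S}$. Since $L(\mathcal{S}) = \mathcal{S}$ and $\mathcal{S} \supseteq \{B_i\}$ a spanning set, the image $L(M_{m,n}) = \operatorname{span}(L(\mathcal{S})) = \operatorname{span}(\mathcal{S}) = M_{m,n}$, giving surjectivity and hence bijectivity in finite dimension. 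This is the crux; the rest is straightforward set manipulation. For the detailed structure and technical background one may consult \cite{dieu}.
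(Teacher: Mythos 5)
Your proof is correct. Note that the paper itself offers no proof of this lemma at all: it merely points the reader to Dieudonn\'{e} \cite{dieu} ``for details,'' so your argument is a genuinely self-contained replacement for that citation rather than a rederivation of an argument in the text. Both directions are sound, and you have correctly located the only delicate point. The paper's Definition \ref{defn-4} of an onto preserver ($L(\mathcal{S})=\mathcal{S}$) does not presuppose that $L$ is invertible, so in the forward direction invertibility must be extracted before $L^{-1}$ can even be discussed; your spanning argument, $L(M_{m,n}) \supseteq \operatorname{span}\bigl(L(\mathcal{S})\bigr) = \operatorname{span}(\mathcal{S}) = M_{m,n}$, followed by the finite-dimensional fact that a surjective endomorphism is bijective, handles this cleanly, after which $L^{-1}(\mathcal{S}) = L^{-1}(L(\mathcal{S})) = \mathcal{S}$ gives both into-preserver conclusions. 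In the reverse direction you correctly observe that the basis hypothesis is not needed: applying $L$ to $L^{-1}(\mathcal{S}) \subseteq \mathcal{S}$ yields $\mathcal{S} \subseteq L(\mathcal{S})$, which with $L(\mathcal{S}) \subseteq \mathcal{S}$ gives equality. Your closing remark on why the basis hypothesis is indispensable (an $L$ that acts bijectively on a proper subspace containing $\mathcal{S}$ while annihilating a complement would satisfy $L(\mathcal{S})=\mathcal{S}$ without being invertible) identifies the correct obstruction and explains the role of the hypothesis in the statement.
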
 

\bigskip
\noindent
\subsubsection{\textbf{Preservers of $S(K_1, K_2)$}} \hspace{\fill}\\

We begin this section with results that will be used in the proof of Theorem \ref{theorem-3}. 

\begin{lemma}\label{lemma-7}
Let $X \in M_n$. If $S X T \in \pi (\reals^n_+)$ for all invertible matrices 
$T \in \pi (\reals^n_+, K)$ and $S \in  \pi (K,\reals^n_+)$, then $X \in \pi (K)$.
\end{lemma}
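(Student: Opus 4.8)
The plan is to fix an arbitrary $v \in K$ and show directly that $Xv \in K$; since $0 \in K$ this is trivial for $v = 0$, so assume $v \neq 0$. By Lemma \ref{lemma-6} applied to the cone $K$ and the point $v$, there exist an invertible $T \in \pi(\reals^n_+, K)$ and a vector $x \in \reals^n_+$ with $Tx = v$. Let $S \in \pi(K, \reals^n_+)$ be any invertible matrix. By hypothesis $SXT \in \pi(\reals^n_+)$, so $SXTx \in \reals^n_+$ because $x \in \reals^n_+$; that is, $SXv \in \reals^n_+$. Hence $SXv \in \reals^n_+$ for every invertible $S \in \pi(K, \reals^n_+)$, and it remains to deduce $Xv \in K$ from this.

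The crux is the following claim: if $u \in \reals^n$ satisfies $Su \in \reals^n_+$ for every invertible $S \in \pi(K, \reals^n_+)$, then $u \in K$. Taking $u = Xv$ then gives $Xv \in K$, and since $v \in K$ was arbitrary we conclude $X(K) \subseteq K$, i.e. $X \in \pi(K)$.

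To prove the claim I would argue by contradiction. First note the standard fact that $S \in \pi(K, \reals^n_+)$ if and only if every row of $S$ lies in $K^*$: the $i$-th coordinate of $Sw$ is $\langle s_i, w\rangle$, where $s_i$ denotes the $i$-th row of $S$, so $Sw \in \reals^n_+$ for all $w \in K$ is equivalent to $s_i \in K^*$ for each $i$ (this is essentially Lemma \ref{(K_1,K_2)-nonnegative}). Now suppose $u \notin K$. Since $K$ is a proper (hence closed) cone, $K = (K^*)^*$, so there is a $y \in K^*$ with $\langle y, u\rangle < 0$. The dual $K^*$ is again proper, so Lemma \ref{lemma-6} applied to $K^*$ and the point $y$ produces an invertible $T' \in \pi(\reals^n_+, K^*)$ and $x' \in \reals^n_+$ with $T'x' = y$; in particular every column of $T'$ lies in $K^*$. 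Put $S := (T')^t$, whose rows are the columns of $T'$; then $S$ is invertible and $S \in \pi(K, \reals^n_+)$. Writing $(Su)_j = \langle T'e_j, u\rangle$ we get $\sum_j x'_j (Su)_j = \langle T'x', u\rangle = \langle y, u\rangle < 0$, so some coordinate of $Su$ is negative, contradicting $Su \in \reals^n_+$. This proves the claim.

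The main obstacle is the claim, and inside it the construction of an invertible $S \in \pi(K, \reals^n_+)$ that detects the separating functional $y$. The resolution is that a single vector $y \in K^*$ can be completed to a basis of $\reals^n$ drawn from $K^*$ precisely because $K^*$ has nonempty interior, which is exactly what Lemma \ref{lemma-6} provides when applied to the proper cone $K^*$. The remaining ingredients — the reduction through $v = Tx$ and the row characterization of $\pi(K, \reals^n_+)$ — are routine.
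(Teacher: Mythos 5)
Your proof is correct and takes essentially the same route as the paper's: fix $v \in K$, use Lemma \ref{lemma-6} on $K$ to write $v = Tx$ and deduce $SXv \in \reals^n_+$ for all admissible $S$, then apply Lemma \ref{lemma-6} to the proper cone $K^*$ and pass to transposes so that vectors of $K^*$ become rows of admissible matrices $S$, concluding via $K = (K^*)^*$. The only difference is presentational: the paper argues directly that $\langle Xv, p\rangle \geq 0$ for every $p \in K^*$, whereas you run the same duality step as a separation-and-contradiction argument (and, usefully, you make explicit the transpose correspondence between $\pi(\reals^n_+, K^*)$ and $\pi(K, \reals^n_+)$ that the paper leaves implicit).
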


\begin{proof}
Let $v \in K$. By 
Lemma \ref{lemma-6}, there exists $T \in \pi (\reals^n_+, K)$ such that $Tx = v$ 
for some $ x \in \reals^n_+$. We have $S X Tx = S Xv \in \reals^n_+$ for all 
invertible $S \in \pi (K,\reals^n_+)$. Therefore, 
$\langle  S Xv,  u \rangle = \langle  Xv, S^t u \rangle \geq 0$ for all 
$u \in \reals^n_+$ and all invertible $S \in \pi (K,\reals^n_+)$. Let us take 
$p \in K^{\ast}$. By Lemma \ref{lemma-6}, there exists 
an invertible $T_1 \in \pi (\reals^n_+, K^{\ast})$ such that $T_1 y = p$ for some 
$y \in \reals^n_+$. In particular, 
$\langle  Xv, T_1 y \rangle = \langle  Xv, p \rangle \geq 0$. Therefore, we get 
$Xv \in K$.      
\end{proof}

Recall that a square matrix $A$ is said to be row positive if $A$ is nonnegative with a nonzero entry 
in each row.

\begin{lemma}\label{lemma-8}
Let $X \in M_n$. If $S X T$ is row positive for all $T  \in  \pi (\reals^n_+, K)$ 
and $S \in \pi (K, \reals^n_+)$ with $T((\reals^n_+)^{\circ}) \subseteq K^{\circ}$ and 
$S(K^{\circ}) \subseteq (\reals^n_+)^{\circ}$, then $X(K^\circ) \subseteq K^\circ$.
\end{lemma}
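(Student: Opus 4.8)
The plan is to mirror the structure of the preceding Lemma~\ref{lemma-7}, which already established that the hypothesis $SXT \in \pi(\reals^n_+)$ for all admissible $T,S$ forces $X \in \pi(K)$. The new statement strengthens both the hypothesis (row positivity rather than mere nonnegativity, together with the interior-mapping conditions on $T$ and $S$) and the conclusion (interior mapped into interior). So I expect $X \in \pi(K)$ to come for free from Lemma~\ref{lemma-7}, and the real work is upgrading this to $X(K^\circ) \subseteq K^\circ$. First I would fix an arbitrary $v \in K^\circ$ and, via Lemma~\ref{lemma-6}, produce an invertible $T \in \pi(\reals^n_+, K)$ with $Tx = v$ for some $x \in \reals^n_+$; the point is to arrange $x \in (\reals^n_+)^\circ$, which is possible because $v$ is interior and $T$ is an invertible (hence open) map, so $T^{-1}$ carries the interior point $v$ to an interior point of $\reals^n_+$. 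I would additionally want $T$ to satisfy $T((\reals^n_+)^\circ) \subseteq K^\circ$, which holds for an invertible $(\reals^n_+,K)$-nonnegative map since such a $T$ maps the open orthant homeomorphically onto a relatively open subset of $K$ lying inside $K^\circ$.

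Having fixed such a $T$, the hypothesis says $SXT$ is row positive for every invertible $S \in \pi(K,\reals^n_+)$ with $S(K^\circ) \subseteq (\reals^n_+)^\circ$. The strategy is to show $Xv \in K^\circ$ by testing against the dual cone: I would use the characterization that a point $w$ lies in $K^\circ$ if and only if $\langle w, p \rangle > 0$ for every nonzero $p \in K^*$. From Lemma~\ref{lemma-7} we already know $Xv \in K$, so it remains to rule out $Xv$ lying on the boundary, i.e. to rule out the existence of a nonzero $p \in K^*$ with $\langle Xv, p \rangle = 0$. The row positivity of $SXT$ evaluated at the interior point $x$ gives $SXTx = S(Xv)$ being a \emph{positive} vector (row positivity together with $x \in (\reals^n_+)^\circ$ forces every coordinate of the product to be strictly positive, since each row of $SXT$ is nonnegative with at least one nonzero entry paired against a strictly positive $x$). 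Thus $S(Xv) \in (\reals^n_+)^\circ$ for all admissible invertible $S$.

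The key step is then to convert "$S(Xv)$ is strictly positive for all such $S$" into "$Xv \in K^\circ$." I would argue as in Lemma~\ref{lemma-7} but tracking strictness: for any nonzero $p \in K^*$, Lemma~\ref{lemma-6} applied to $K^*$ produces an invertible $T_1 \in \pi(\reals^n_+, K^*)$ with $T_1 y = p$, and one can choose $p$ to be an interior point of $K^*$ first, handling the boundary by a density/limiting argument via Lemma~\ref{a density argument}. The relation $\langle Xv, Su \rangle = \langle S^t Xv, u \rangle$ coupled with $S(Xv) \in (\reals^n_+)^\circ$ should yield $\langle Xv, p \rangle > 0$ for interior $p$, since $S^t$ ranges over enough of $\pi(K^*, \reals^n_+)$-type maps to detect strict positivity against all of $K^*$. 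Concretely, because $S(Xv)$ is strictly positive for every admissible $S$ and these $S$ realize every generator direction of the orthant-to-cone correspondence, $Xv$ cannot annihilate any nonzero functional in $K^*$.

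I expect the main obstacle to be the bookkeeping around strictness and the interior conditions: ensuring that the $T$ and $S$ furnished by Lemma~\ref{lemma-6} can be chosen invertible \emph{and} satisfying the required interior inclusions $T((\reals^n_+)^\circ) \subseteq K^\circ$ and $S(K^\circ) \subseteq (\reals^n_+)^\circ$ simultaneously, and then pushing the strict inequality $\langle Xv, p\rangle > 0$ through the boundary of $K^*$ by continuity. The algebraic identity $\langle Xv, Su\rangle = \langle Xv, S^t u\rangle$ and the test against $K^*$ are routine once the strictness is in hand; the delicate part is producing, for each fixed interior $v$, a single admissible $T$ with $Tx = v$ at an interior $x$, so that row positivity of $SXT$ genuinely forces $S(Xv)$ into the open orthant rather than merely onto its boundary.
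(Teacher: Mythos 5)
Your reduction to Lemma~\ref{lemma-7} is legitimate: an invertible $T \in \pi(\reals^n_+,K)$ automatically satisfies $T((\reals^n_+)^{\circ}) \subseteq K^{\circ}$, since $T((\reals^n_+)^{\circ})$ is an open subset of $K$, and similarly for $S$; so the hypothesis of Lemma~\ref{lemma-8} does imply that of Lemma~\ref{lemma-7}. But both places where you pass from nonnegativity to strictness have real gaps. First, Lemma~\ref{lemma-6} does \emph{not} furnish $Tx = v$ with $x \in (\reals^n_+)^{\circ}$: its construction gives $x = e_1$, a boundary point, and your open-mapping argument does not repair this. Openness of $T^{-1}$ says nothing about $T^{-1}(K^{\circ})$ lying inside $(\reals^n_+)^{\circ}$; that set lies in the interior of the generally much larger cone $T^{-1}(K)$. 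Concretely, for $K = \reals^2_+$ and $T = \left[\begin{smallmatrix} 1 & 0 \\ 1 & 1 \end{smallmatrix}\right]$, $T$ is invertible and nonnegative, yet $T^{-1}$ carries the interior point $(1,1)^t$ to $e_1$. This gap is repairable, but only by dropping invertibility, which the hypothesis of Lemma~\ref{lemma-8} (unlike that of Lemma~\ref{lemma-7}) permits: the paper takes the rank-one matrix $T = [\tfrac{1}{n}v \ \cdots \ \tfrac{1}{n}v] \in \pi(\reals^n_+,K)$, which maps $(\reals^n_+)^{\circ}$ into $K^{\circ}$ (every image is a positive multiple of $v$) and sends $x = (1,\ldots,1)^t \in (\reals^n_+)^{\circ}$ to $v$. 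Your insistence on invertible maps, needed only to quote Lemma~\ref{lemma-7} (itself dispensable here), is what creates the difficulty.

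The second gap is fatal as written. You establish $\langle Xv, p \rangle > 0$ only for $p \in (K^{\ast})^{\circ}$ and propose to reach boundary $p$ ``by a density/limiting argument via Lemma~\ref{a density argument}.'' Limits do not preserve strict inequalities: from $\langle Xv, p_k \rangle > 0$ with $p_k \to p$ you obtain only $\langle Xv, p \rangle \geq 0$, i.e., only $Xv \in K^{\ast\ast} = K$, which you already had from Lemma~\ref{lemma-7}. Indeed, strict positivity against $(K^{\ast})^{\circ}$ simply does not characterize $K^{\circ}$: for $K = \reals^2_+$, the boundary vector $w = (1,0)^t$ satisfies $\langle w, p \rangle > 0$ for every $p \in (K^{\ast})^{\circ}$. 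So your argument, as proposed, proves only $Xv \in K$. What is needed, and what the paper's proof does with no limiting at all, is a strict test against \emph{every} nonzero $p \in K^{\ast}$, boundary points included: take $T_1 = [p \ q \ \cdots \ q]$ with $q \in (K^{\ast})^{\circ}$. All columns of $T_1$ are nonzero elements of $K^{\ast}$, so $S = T_1^t$ lies in $\pi(K,\reals^n_+)$ and maps $K^{\circ}$ into $(\reals^n_+)^{\circ}$, hence is admissible; your own earlier step then gives $S(Xv) \in (\reals^n_+)^{\circ}$, whose first coordinate is exactly $\langle p, Xv \rangle > 0$. (Your instinct to apply Lemma~\ref{lemma-6} to $K^{\ast}$ would also have worked for boundary $p$, since the columns it produces form a basis and are therefore nonzero elements of $K^{\ast}$, which is all that admissibility of $T_1^t$ requires; the restriction to interior $p$, and hence the density step, was never necessary.)
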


\begin{proof}
Let $v \in K^\circ$. Take $T= [\frac{1}{n}v \cdots \frac{1}{n}v]$. We see that 
$T \in \pi (\reals^n_+, K)$ with $T((\reals^n_+)^{\circ}) \subseteq K^{\circ}$ and 
$Tx = v$, where $x =[1 \cdots 1]^t \in (\reals^n_+)^\circ$. We have 
$S X Tx = S Xv \in (\reals^n_+)^\circ$ for all $S  \in  \pi (K,\reals^n_+)$ with 
$S(K^{\circ}) \subseteq (\reals^n_+)^{\circ}$. Then, $\langle  S Xv, u \rangle = 
\langle  Xv, S^t u \rangle > 0$ for all $0 \neq u \in \reals^n_+$ and $S  \in  
\pi(K,\reals^n_+)$ with $S(K^{\circ}) \subseteq (\reals^n_+)^{\circ}$. Let us take 
$0 \neq p \in K^{\ast}$ and $T_1 = [p \ q \cdots q]$, where $q \in (K^{\ast})^\circ$. 
It is easy to verify that $T_1 \in \pi (\reals^n_+, K^{\ast})$ with 
$T((\reals^n_+)^{\circ}) \subseteq K^{\circ}$ and $T_1 y = p$, where 
$ y= [1 \ 0 \cdots 0]^t \in \reals^n_+$. In particular, 
$\langle  Xv, T_1 y \rangle = \langle  Xv, p \rangle  > 0$. Therefore, 
we get $X v \in K^\circ$. 
\end{proof}

The main theorems of this section are proved below.

\begin{theorem}\label{theorem-2}
Let $S_2 \in \pi (\reals^n_+,K_1)$ and $Q_2 \in \pi (K_1, \reals^n_+)$ be invertible 
matrices and $S_1 \in \pi (\reals^m_+,K_2)$ and $Q_1 \in \pi (K_2, \reals^m_+)$ with 
$S_1((\reals^m_+)^{\circ}) \subseteq K_2^{\circ}$ and $Q_1(K_2^{\circ}) \subseteq 
(\reals^m_+)^{\circ}$, respectively. Let $T_1(A)= Q_1 A Q_2^{-1}$ and 
$T_2(A)= S_1 A S_2^{-1}$. If $L$: $M_{m,n} \rightarrow M_{m,n}$ is an into preserver 
of $S(K_1, K_2)$, then $L_1 = T_1L T_2$ is an into preserver of 
$S(\reals^n_+,\reals^m_+)$.
\end{theorem}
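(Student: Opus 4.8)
The plan is to note first that $L_1 = T_1 L T_2$ is itself a linear map on $M_{m,n}$, being a composition of the linear maps $T_2$, $L$ and $T_1$; so the only thing to verify is that $L_1$ carries $S(\reals^n_+,\reals^m_+)$ into itself. I would do this by tracking an arbitrary $B \in S(\reals^n_+,\reals^m_+)$ through the three maps one at a time, invoking the two transport statements of Lemma \ref{lemma-1} at the two ends and the preserver hypothesis on $L$ in the middle.

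Concretely, take $B \in S(\reals^n_+,\reals^m_+)$. Since $S_1 \in \pi(\reals^m_+,K_2)$ satisfies $S_1((\reals^m_+)^\circ) \subseteq K_2^\circ$ and $S_2 \in \pi(\reals^n_+,K_1)$ is invertible, Lemma \ref{lemma-1}(2) gives $T_2(B) = S_1 B S_2^{-1} \in S(K_1,K_2)$. As $L$ is an into preserver of $S(K_1,K_2)$, we then have $L(T_2(B)) \in S(K_1,K_2)$. Finally, since $Q_1 \in \pi(K_2,\reals^m_+)$ satisfies $Q_1(K_2^\circ) \subseteq (\reals^m_+)^\circ$ and $Q_2 \in \pi(K_1,\reals^n_+)$ is invertible, Lemma \ref{lemma-1}(1) applied to the element $L(T_2(B)) \in S(K_1,K_2)$ yields $T_1(L(T_2(B))) = Q_1\, L(T_2(B))\, Q_2^{-1} \in S(\reals^n_+,\reals^m_+)$. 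Since $L_1(B) = T_1(L(T_2(B)))$, this is exactly the assertion that $L_1(B) \in S(\reals^n_+,\reals^m_+)$, and as $B$ was arbitrary, $L_1$ is an into preserver of $S(\reals^n_+,\reals^m_+)$.

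There is no genuine obstacle here: the statement is a clean chaining argument whose content is entirely housed in Lemma \ref{lemma-1}. The only point that requires care is verifying that $S_1$ and $Q_1$ satisfy the interior inclusions $S_1((\reals^m_+)^\circ) \subseteq K_2^\circ$ and $Q_1(K_2^\circ) \subseteq (\reals^m_+)^\circ$, since these (rather than mere cone-nonnegativity) are what license the use of Lemma \ref{lemma-1} and guarantee that an interior point is transported to an interior point at each end; but these are precisely the standing hypotheses imposed on $S_1$ and $Q_1$ in the statement. The invertibility of $S_2$ and $Q_2$ is what makes $T_2$ and $T_1$ well defined through $S_2^{-1}$ and $Q_2^{-1}$.
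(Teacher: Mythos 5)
Your proposal is correct and is essentially identical to the paper's own proof: both track an arbitrary element of $S(\reals^n_+,\reals^m_+)$ through $T_2$, $L$, and $T_1$ in turn, applying Lemma \ref{lemma-1}(2), the into-preserver hypothesis on $L$, and Lemma \ref{lemma-1}(1) at the respective steps. Your version is merely more explicit than the paper's about which part of Lemma \ref{lemma-1} is invoked where and which interior-inclusion hypotheses license each application.
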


\begin{proof}
Let $A \in S (\reals^n_+,\reals^m_+)$. By Lemma \ref{lemma-1}, $T_2(A) \in S(K_1, K_2)$. Then $L T_2 (A) \in S(K_1, K_2) $, since $L(S(K_1, K_2)) \subset S(K_1,K_2)$. 
By Lemma \ref{lemma-1}, we finally have $T_1 L T_2 (A) \in S (\reals^n_+,\reals^m_+)$.
\end{proof}

\begin{remark}
Suppose the map $L$ as well as the matrices $S_1$ and $Q_1$ are invertible 
(so that the maps $T_1$ and $T_2$ are invertible), then the map $L_1$ is an 
invertible linear preserver of semipositivity. It then follows from Theorem 
\ref{mxn-main-thm} that $L_1(A) = XAY$ for every $A \in M_{m,n}$ 
for some invertible row positive matrix $X$ and an inverse nonnegative matrix $Y$. 
This also yields that $L(A) = \widetilde{X} A \widetilde{Y}$ for every $A \in M_{m,n}$ 
for some matrices $\widetilde{X}$ and $\widetilde{Y}$ of appropriate sizes. This gives 
us a motivation to study preserver properties of the map $A \mapsto XAY$ for 
appropriate $X$ and $Y$. We however wish to emphasize that no invertibility assumption 
is made in the following result.
\end{remark}

\begin{theorem}\label{theorem-3}
Let $L(A) = XAY$ be a linear map on $M_{m,n}$, where $X \in M_m 
\ and \ Y \in M_n$ are fixed. $L$ is an into preserver of $S( K_1, K_2)$
if and only if either $X(K_2^\circ) \subseteq K_2^\circ$ and $Y$ is $K_1$-inverse
nonnegative or $ - X(K_2^\circ) \subseteq K_2^\circ$ and $ - Y$ is $K_1$-inverse nonnegative.
\end{theorem}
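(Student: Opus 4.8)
The plan is to prove both implications separately, dispatching sufficiency by a direct change of variable and reducing necessity to the nonnegative orthant case already settled in \cite{dgjjt}.

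For sufficiency, suppose first that $X(K_2^\circ) \subseteq K_2^\circ$ and that $Y$ is $K_1$-inverse nonnegative. Given $A \in S(K_1,K_2)$, I would pick $x \in K_1^\circ$ with $Ax \in K_2^\circ$ and set $w = Y^{-1}x$. Since $Y^{-1}(K_1) \subseteq K_1$ and $Y^{-1}$ is a linear homeomorphism, $Y^{-1}(K_1^\circ) = (Y^{-1}(K_1))^\circ \subseteq K_1^\circ$, so $w \in K_1^\circ$; then $(XAY)w = XAx \in X(K_2^\circ) \subseteq K_2^\circ$, whence $XAY \in S(K_1,K_2)$. The second alternative is immediate from the identity $XAY = (-X)A(-Y)$.

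For necessity, the idea is to transport $L$ to the orthant. I would fix invertible $S_2 \in \pi(\reals^n_+, K_1)$ and $Q_2 \in \pi(K_1, \reals^n_+)$, together with interior-respecting $S_1 \in \pi(\reals^m_+, K_2)$ and $Q_1 \in \pi(K_2, \reals^m_+)$ as in Theorem \ref{theorem-2}, and form $L_1 = T_1 L T_2$. Because $L(A)=XAY$, a direct computation gives $L_1(A) = (Q_1 X S_1)\,A\,(S_2^{-1}Y Q_2^{-1})$, once more of the form $\widetilde{X} A \widetilde{Y}$, and Theorem \ref{theorem-2} guarantees $L_1$ is an into preserver of $S(\reals^n_+,\reals^m_+)$. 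Theorem 2.4 of \cite{dgjjt}, which requires no invertibility, then forces, for each admissible tuple $(Q_1,Q_2,S_1,S_2)$, one of the two alternatives: either $Q_1 X S_1$ is row positive and $S_2^{-1}Y Q_2^{-1}$ is inverse nonnegative, or both of their negatives are.

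The delicate point, and the step I expect to be the main obstacle, is to show that a \emph{single} alternative holds uniformly over all admissible tuples, so that Lemmas \ref{lemma-7} and \ref{lemma-8} apply. The key observation is that the $Y$-part of the alternative depends only on $(Q_2,S_2)$ while the $X$-part depends only on $(Q_1,S_1)$; since at most one of a matrix and its negative can be inverse nonnegative, the branch is pinned down by $(Q_2,S_2)$ alone (and, in particular, $Y$ must be invertible, for otherwise neither branch could hold, contradicting that $L_1$ preserves semipositivity). Fixing one pair $(Q_2,S_2)$ and, after possibly replacing $(X,Y)$ by $(-X,-Y)$ — which leaves $L$ unchanged — assuming $S_2^{-1}YQ_2^{-1}$ is inverse nonnegative, I conclude that $Q_1 X S_1$ is row positive for \emph{every} admissible $(Q_1,S_1)$, so Lemma \ref{lemma-8} yields $X(K_2^\circ)\subseteq K_2^\circ$. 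Were the opposite branch to occur for some other pair $(Q_2',S_2')$, the same reasoning would give $-X(K_2^\circ)\subseteq K_2^\circ$, which is incompatible with $X(K_2^\circ)\subseteq K_2^\circ$ by pointedness of $K_2$ (no $v\in K_2^\circ$ can have $Xv$ and $-Xv$ both in $K_2^\circ$). Hence the first branch holds for all $(Q_2,S_2)$, so $Q_2 Y^{-1}S_2 = (S_2^{-1}YQ_2^{-1})^{-1} \in \pi(\reals^n_+)$ for every admissible $(Q_2,S_2)$, and Lemma \ref{lemma-7} applied to $Y^{-1}$ gives $Y^{-1}(K_1)\subseteq K_1$, i.e.\ $Y$ is $K_1$-inverse nonnegative. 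This establishes the first alternative, and the earlier replacement $(X,Y)\mapsto(-X,-Y)$ accounts for the second.
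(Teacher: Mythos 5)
Your proof is correct and follows essentially the same route as the paper: sufficiency by a direct change of variable, and necessity by transporting $L$ to the orthant via Theorem \ref{theorem-2}, invoking Theorem 2.4 of \cite{dgjjt}, and then applying Lemmas \ref{lemma-7} and \ref{lemma-8}. Your explicit argument that a single alternative holds uniformly over all admissible tuples $(Q_1,Q_2,S_1,S_2)$ --- pinning the branch down by $(Q_2,S_2)$ alone, since a matrix and its negative cannot both be inverse nonnegative, and excluding a mixed situation via pointedness of $K_2$ --- supplies a detail the paper's proof passes over in silence, but it is a refinement of the same method rather than a different one.
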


\begin{proof}
Suppose $X(K_2^\circ) \subseteq K_2^\circ$ and $Y$ is $K_1$-inverse nonnegative,
then $XAY \in S( K_1, K_2)$ whenever $A \in S( K_1, K_2)$. If
$ - X(K_2^\circ) \subseteq K_2^\circ$ and $ - Y$ is $K_1$-inverse nonnegative, 
then $L(A)= (-X)A(-Y) = XAY \in S( K_1, K_2)$ whenever $A \in S(K_1, K_2)$.

Conversely, by Theorem \ref{theorem-2}, the map  $L_1(A) = T_1L T_2(A) = 
Q_1 X S_1 A S_2^{-1} Y Q_2^{-1}$ is an into preserver of $S (\reals^n_+,\reals^m_+)$, 
for all $Q_1, Q_2, S_1, S_2$ (all of them satisfying the assumptions of Theorem 
\ref{theorem-2}). By Theorem 2.4 of \cite{dgjjt}, either $Q_1 X S_1 $ is row positive 
and $S_2^{-1} Y Q_2^{-1}$ is inverse nonnegative or $-Q_1 X S_1 $ is row positive and 
$-S_2^{-1} Y Q_2^{-1}$ is inverse nonnegative. By using Lemmas \ref{lemma-7} and 
\ref{lemma-8}, we finally have either $X(K_2^\circ) \subseteq K_2^\circ$ and $Y$ 
is $K_1$-inverse nonnegative, or $ - X(K_2^\circ) \subseteq K_2^\circ$ and $ - Y$ is
$K_1$-inverse nonnegative. 
\end{proof}

The following corollary follows from Theorems \ref{theorem-1.1}, \ref{theorem-3}, and 
Lemma \ref{lemma-5}.
 
\begin{corollary}\label{cor-1}
The linear map $L(A) = XAY$ is an onto preserver of $S(K_1, K_2)$ if and only if
$X(K_2) = K_2$ and $Y (K_1) =K_1$, or $-X(K_2) = K_2$ and $-Y(K_1) = K_1$.
\end{corollary}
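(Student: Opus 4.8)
The plan is to deduce the corollary from the three cited ingredients by reducing the onto (strong) preserver condition to a pair of into conditions. Since $S(K_1,K_2)$ contains a basis for $M_{m,n}$ by Theorem \ref{theorem-1.1}(1), Lemma \ref{lemma-5} shows that $L(A)=XAY$ is an onto preserver of $S(K_1,K_2)$ precisely when both $L$ and its inverse $L^{-1}(A)=X^{-1}AY^{-1}$ are into preservers of $S(K_1,K_2)$; here onto-ness on a spanning set forces $L$, and hence $X$ and $Y$, to be invertible. Thus the whole argument amounts to applying Theorem \ref{theorem-3} twice, once to $L$ and once to $L^{-1}$, and then reconciling the two conclusions.

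For the sufficiency, suppose $X(K_2)=K_2$ and $Y(K_1)=K_1$. Since $K_1$ and $K_2$ are full-dimensional, these equalities force $X$ and $Y$ to be invertible, and an invertible map carrying a proper cone onto itself is a homeomorphism, hence carries the interior onto itself; in particular $X(K_2^\circ)\subseteq K_2^\circ$. Applying $Y^{-1}$ to $Y(K_1)=K_1$ gives $Y^{-1}(K_1)=K_1\subseteq K_1$, so $Y$ is $K_1$-inverse nonnegative, and Theorem \ref{theorem-3} makes $L$ an into preserver. The same reasoning applied to $X^{-1}(K_2)=K_2$ and to $Y^{-1}$ shows $L^{-1}$ is an into preserver, whence $L$ is onto by Lemma \ref{lemma-5}. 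The case $-X(K_2)=K_2$, $-Y(K_1)=K_1$ is identical after writing $L(A)=(-X)A(-Y)$.

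For the necessity, assume $L$ is an onto preserver. By Lemma \ref{lemma-5} both $L$ and $L^{-1}$ are into preservers, so Theorem \ref{theorem-3} applies to each and yields a sign choice in both cases. Taking the \emph{positive} branch for $L$ gives $X(K_2^\circ)\subseteq K_2^\circ$ and $Y^{-1}(K_1)\subseteq K_1$, while the positive branch for $L^{-1}$ gives $X^{-1}(K_2^\circ)\subseteq K_2^\circ$ and $Y(K_1)\subseteq K_1$. I would convert the interior inclusions into inclusions of the closed cones using Lemma \ref{a density argument} (continuity of $X$ together with $\overline{K_2^\circ}=K_2$), obtaining $X(K_2)\subseteq K_2$ and $X^{-1}(K_2)\subseteq K_2$; applying $X$ to the latter yields $K_2\subseteq X(K_2)$, so $X(K_2)=K_2$, and the two inclusions on $Y$ combine to $Y(K_1)=K_1$. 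The negative branches combine analogously to $-X(K_2)=K_2$, $-Y(K_1)=K_1$.

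The one point requiring care, and the part I expect to be the main obstacle, is showing that the sign choices coming from $L$ and from $L^{-1}$ must agree, since a priori Theorem \ref{theorem-3} could deliver the positive branch for $L$ and the negative branch for $L^{-1}$. I would rule out this mixed case using pointedness of $K_2$: the positive branch for $L$ gives $X(K_2)\subseteq K_2$, while the negative branch for $L^{-1}$ gives $-X^{-1}(K_2)\subseteq K_2$, that is $X^{-1}(K_2)\subseteq -K_2$; applying $X$ yields $K_2\subseteq -X(K_2)\subseteq -K_2$, so $K_2\subseteq K_2\cap(-K_2)=\{0\}$, contradicting $K_2^\circ\neq\emptyset$. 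Hence the positive branch for $L$ forces the positive branch for $L^{-1}$ and, symmetrically, negative forces negative, which is exactly the dichotomy in the statement. Once this consistency is secured, the remaining steps are the routine closure and inverse manipulations indicated above.
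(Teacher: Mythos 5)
Your proposal is correct and takes essentially the same route as the paper: reduce onto-preservation to into-preservation of $L$ and $L^{-1}$ via Theorem \ref{theorem-1.1} and Lemma \ref{lemma-5}, apply Theorem \ref{theorem-3} to each, and pass from interior inclusions to equalities of closed cones using Lemma \ref{a density argument}. The only place you go beyond the paper is the sign-consistency step: the paper's proof states the two branches in parallel without ruling out the mixed case (positive branch for $L$, negative for $L^{-1}$), whereas your pointedness argument ($K_2 \subseteq -X(K_2) \subseteq -K_2$, hence $K_2 \subseteq K_2 \cap (-K_2) = \{0\}$, contradicting $K_2^{\circ} \neq \emptyset$) closes that small gap explicitly.
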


\begin{proof}
We know that  there is a basis for $M_{m,n}$ from $S(K_1, K_2)$. Since $L$ is 
an onto preserver of $S(K_1, K_2)$, both $L$ and its inverse are into preservers
of $S(K_1,K_2)$. Therefore, by the previous theorem, $X(K_2^{\circ}) 
\subseteq K_2^{\circ}$ and $Y(K_1^{\circ}) \subseteq K_1^{\circ}$ 
(or $ -X(K_2^{\circ}) \subseteq K_2^{\circ}$ and $ -Y(K_1^{\circ}) 
\subseteq K_1^{\circ}$). Moreover, $X$ and $Y$ 
(or $-X$ and $-Y$) are $K_2$-inverse nonnegative and $K_1$-inverse 
nonnegative, respectively. Since $K^{\circ}$ is dense in $K$ by 
Lemma \ref{a density argument}, this shows one implication.

Conversely, if $X(K_2) = K_2$ and $Y (K_1) =K_1$, then $L(A) = XAY$
is an into preserver of $S(K_1, K_2)$ and so is $L^{-1}(A) = X^{-1}AY^{-1}$. 
Thus, $L$ is an onto preserver of $S(K_1, K_2)$.
\end{proof}

\bigskip
\noindent
\subsubsection{\textbf{Preservers of $MS(K_1, K_2)$}} \hspace{\fill}\\

We now turn our attention to linear maps $L$ that preserve the set $MS(K_1,K_2)$. We 
start with the following result on nonnegativity.

\begin{lemma}\label{lemma-9}
Let $X \in M_n$. If $S^{-1} X (T^t)^{-1} \in \pi (\reals^n_+)$, for all invertible 
$S \in \pi (\reals^n_+, K)$ and $T \in \pi (\reals^n_+, K^{\ast})$, then $X \in \pi (K)$.
\end{lemma}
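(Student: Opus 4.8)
The plan is to mirror the proof of Lemma \ref{lemma-7}, but adapted to the inverse-form hypothesis. The conclusion $X \in \pi(K)$ means $X(K) \subseteq K$, and since $K$ is a proper cone we have $K = K^{\ast\ast}$; hence it suffices to show that $\langle Xv, p \rangle \geq 0$ for every $v \in K$ and every $p \in K^{\ast}$. I would first translate the hypothesis into inner-product form: nonnegativity of $S^{-1} X (T^t)^{-1} \in \pi(\reals^n_+)$ means that for all $x, u \in \reals^n_+$,
\[
0 \le \langle S^{-1} X (T^t)^{-1} x,\, u \rangle = \langle X (T^t)^{-1} x,\, (S^t)^{-1} u \rangle .
\]
Thus the two vectors I must learn to control are $v := (T^t)^{-1} x$ and $p := (S^t)^{-1} u$, and the whole argument reduces to realizing an arbitrary $v \in K$ and an arbitrary $p \in K^{\ast}$ in these two forms with $x, u \in \reals^n_+$.

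For the first, I would fix an invertible $T \in \pi(\reals^n_+, K^{\ast})$, which exists by Lemma \ref{lemma-6} applied to any point of $K^{\ast}$. Applying Lemma \ref{(K_1,K_2)-nonnegative} to the inclusion $T(\reals^n_+) \subseteq K^{\ast}$ gives $T^t(K^{\ast\ast}) = T^t(K) \subseteq \reals^n_+$. Consequently, for the given $v \in K$, the vector $x := T^t v$ lies in $\reals^n_+$ and satisfies $(T^t)^{-1} x = v$. Symmetrically, fixing an invertible $S \in \pi(\reals^n_+, K)$ and applying Lemma \ref{(K_1,K_2)-nonnegative} to $S(\reals^n_+) \subseteq K$ yields $S^t(K^{\ast}) \subseteq (\reals^n_+)^{\ast} = \reals^n_+$; so for any prescribed $p \in K^{\ast}$ the vector $u := S^t p$ lies in $\reals^n_+$ and satisfies $(S^t)^{-1} u = p$.

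Substituting these choices of $x$ and $u$ into the displayed inequality then gives $\langle Xv, p \rangle \geq 0$ for every $p \in K^{\ast}$, whence $Xv \in K^{\ast\ast} = K$. Since $v \in K$ was arbitrary, this proves $X(K) \subseteq K$, i.e.\ $X \in \pi(K)$.

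The step requiring the most care — and the only real (if mild) obstacle — is keeping the transposes and the \emph{directions} of the cone inclusions straight. The naive temptation is to treat $S^{-1}$ and $(T^t)^{-1}$ as nonnegativity data in their own right, but $S^{-1}(K)$ and $(T^t)^{-1}(\reals^n_+)$ are the ``wrong-way'' images and do not yield usable containments. The correct move is to apply Lemma \ref{(K_1,K_2)-nonnegative} to $S$ and $T$ themselves, producing $T^t(K) \subseteq \reals^n_+$ and $S^t(K^{\ast}) \subseteq \reals^n_+$; once these are in hand the substitutions $x = T^t v$ and $u = S^t p$ are forced, and the argument closes exactly as in Lemma \ref{lemma-7}. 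Existence of the required invertible $S$ and $T$ is guaranteed by Lemma \ref{lemma-6}, since both $K$ and $K^{\ast}$ are proper cones.
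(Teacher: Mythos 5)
Your proof is correct and is essentially the paper's own argument: the paper's terse proof rests on exactly the two inclusions $K \subset (T^t)^{-1}(\reals^n_+)$ and $K^{\ast} \subset (S^t)^{-1}(\reals^n_+)$, which are precisely your substitutions $x = T^t v$ and $u = S^t p$ obtained from Lemma \ref{(K_1,K_2)-nonnegative}, followed by the same dualization $\langle Xv, p\rangle \geq 0$ for all $p \in K^{\ast}$, giving $Xv \in K^{\ast\ast} = K$. Your write-up is just a more explicit version of the same route, including the (implicit in the paper) appeal to Lemma \ref{lemma-6} for the existence of the required invertible $S$ and $T$.
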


\begin{proof}
Let $x \in K$. As $K \subset (T^t)^{-1} (\reals^n_+), \ (T^t)^{-1} x = v$ for some 
$x \in \reals^n_+$. We get 
$\langle S^{-1} Xv, u \rangle = \langle Xv, (S^t)^{-1} u \rangle \geq 0$ 
for all $ u \in \reals^n_+$. Since $K^{\ast} \subset (S^t)^{-1} (\reals^n_+)$, 
it follows that $X v \in K$.
\end{proof}

Our main result is the following.

\begin{theorem}\label{theorem-4}
Let $S_1 \in \pi (\reals^n_+,K_1^*), \ S_2 \in \pi (\reals^m_+,K_2), \ 
Q_1 \in \pi(\reals^n_+,K_1)$ and $Q_2 \in \pi (\reals^m_+,K_2)$ be invertible 
matrices. Assume further that $Q_2(R^m _+) = K_2$ and $S_2(R^m _+) = K_2$. 
Let $P_1(A)= S_2^{-1} A (S_1^t)^{-1}$ and $P_2(A)= Q_2 A Q_1^{-1}$. If 
$L$: $M_{m,n} \rightarrow M_{m,n}$ is an into preserver of $MS(K_1, K_2)$, then 
$L_2 = P_1L P_2$ is an into preserver of minimally semipositive matrices.
\end{theorem}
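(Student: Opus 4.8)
The plan is to imitate the proof of Theorem~\ref{theorem-2} step for step, with the minimal-semipositivity transfer Lemma~\ref{lemma-2} playing the role that Lemma~\ref{lemma-1} played for ordinary semipositivity. The composition $L_2 = P_1 L P_2$ should be read as a three-stage passage: $P_2$ pushes a minimally semipositive matrix forward into $MS(K_1,K_2)$, the hypothesis on $L$ keeps it inside $MS(K_1,K_2)$, and $P_1$ pulls the result back to a minimally semipositive matrix over the orthants. I would verify these three passages one after another.

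First I would fix a minimally semipositive $A \in M_{m,n}$ and look at $P_2(A) = Q_2 A Q_1^{-1}$. This is exactly the shape $T A S^{-1}$ of Lemma~\ref{lemma-2}(1) with $T = Q_2$ and $S = Q_1$, and the hypotheses demanded there are furnished verbatim by the statement: $Q_1 \in \pi(\reals^n_+, K_1)$ is invertible and $Q_2$ is invertible with $Q_2(\reals^m_+) = K_2$. Hence $P_2(A) \in MS(K_1,K_2)$. Since $L$ is an into preserver of $MS(K_1,K_2)$, we then have $L P_2(A) \in MS(K_1,K_2)$. It remains to treat $P_1(L P_2(A)) = S_2^{-1}\, L P_2(A)\, (S_1^t)^{-1}$. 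Writing $B = L P_2(A)$, this is precisely the form $T^{-1} B (S^t)^{-1}$ of Lemma~\ref{lemma-2}(2) with $T = S_2$ and $S = S_1$, and once more the required hypotheses, namely $S_1 \in \pi(\reals^n_+, K_1^*)$ invertible and $S_2$ invertible with $S_2(\reals^m_+) = K_2$, are exactly the standing assumptions. Thus $L_2(A) = P_1 L P_2(A)$ is minimally semipositive, which is the desired conclusion.

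I do not expect a genuine obstacle here; the proof is a direct two-sided application of Lemma~\ref{lemma-2}, and the care needed is purely in the bookkeeping. The one thing to get right is the pairing of roles: part~(1) is invoked in the ``into the cones'' direction (which is why $P_2$ is built from $Q_1, Q_2$) and part~(2) in the ``back to the orthants'' direction (which is why $P_1$ uses the dual-adapted $S_1 \in \pi(\reals^n_+, K_1^*)$ together with the transpose-inverse $(S_1^t)^{-1}$). A secondary point is the hypothesis ``$m > n$ with $K_2$ simplicial'' under which Lemma~\ref{lemma-2} is stated: here the assumptions $Q_2(\reals^m_+) = K_2 = S_2(\reals^m_+)$ with invertible $Q_2, S_2$ make $K_2$ simplicial for free, and the transfer argument of Lemma~\ref{lemma-2} uses the dimension only through the existence of a nonnegative left inverse, available whenever $m \geq n$; so the same sandwich also closes the case $m = n$ (alternatively via the companion statement recorded in the remark immediately following Lemma~\ref{lemma-2}, which has the matching $T^{-1} B (S^t)^{-1}$ form).
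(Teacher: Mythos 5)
Your proof is correct and essentially identical to the paper's: the paper likewise applies Lemma~\ref{lemma-2}(1) to place $P_2(A) = Q_2 A Q_1^{-1}$ in $MS(K_1,K_2)$, invokes the into-preserver hypothesis on $L$, and then applies Lemma~\ref{lemma-2}(2) to conclude that $S_2^{-1}\, L P_2(A)\, (S_1^t)^{-1}$ is minimally semipositive. Your closing remark about the $m=n$ case (simpliciality of $K_2$ coming for free from $Q_2(\reals^m_+)=K_2$, and the companion statement recorded after Lemma~\ref{lemma-2}) addresses a point the paper's terse proof leaves implicit.
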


\begin{proof}
Notice that $L$ and consequently $L_2$ are invertible maps. Let $A$ be minimally semipositive. By Lemma \ref{lemma-2}, $P_2(A) \in MS(K_1, K_2)$. Then 
$L P_2 (A) \in MS(K_1, K_2 )$, 	since $L(MS(K_1, K_2)) \subset MS(K_1,K_2)$. Again, 
by Lemma \ref{lemma-2}, we have $P_1 L P_2 (A)$ is minimally semipositive. 
\end{proof}

Similar to the previous section, we now focus our attention to the map $L(A) = XAY$. 
We have a complete answer in this case too. We discuss the cases $n < m$ and $n = m$ separately.

\begin{theorem}\label{theorem-5}
Let $L(A) = X A Y$ be a linear map on $M_{m,n}$ with $n < m$ (where $n \geq 2$) 
for fixed $X \in M_m \ and \ Y \in M_n$. Then $L$ is an into preserver of 
$MS(K_1, K_2)$ if and only if $X(K_2) = K_2$ and $Y$ is $K_1$-inverse nonnegative, 
or $-X(K_2) = K_2$ and $-Y$ is $K_1$-inverse nonnegative.
\end{theorem}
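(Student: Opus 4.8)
The plan is to prove both implications, mirroring the proof of Theorem \ref{theorem-3} but with $S(K_1,K_2)$ replaced everywhere by $MS(K_1,K_2)$: Theorem \ref{theorem-4} takes the place of Theorem \ref{theorem-2}, and the minimal-semipositivity preserver result (Theorem 2.11 of \cite{dgjjt}) takes the place of the semipositivity result (Theorem 2.4 of \cite{dgjjt}). As in the earlier $MS$-results, I take $K_2$ to be simplicial in the regime $m>n$, so that invertible maps carrying $\reals^m_+$ onto $K_2$ exist and Theorem \ref{theorem-4} can be invoked.

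For sufficiency, suppose $X(K_2)=K_2$ and $Y$ is $K_1$-inverse nonnegative, and let $A\in MS(K_1,K_2)$ with $(K_2,K_1)$-nonnegative left inverse $B$, so $BA=I_n$. First, $XAY\in S(K_1,K_2)$ by Theorem \ref{theorem-3}, since $X(K_2)=K_2$ forces $X(K_2^\circ)\subseteq K_2^\circ$. I would then exhibit $Y^{-1}BX^{-1}$ as a $(K_2,K_1)$-nonnegative left inverse of $XAY$: it is a left inverse because $(Y^{-1}BX^{-1})(XAY)=Y^{-1}(BA)Y=I_n$, and it is $(K_2,K_1)$-nonnegative because $X^{-1}(K_2)=K_2$, $B(K_2)\subseteq K_1$ and $Y^{-1}(K_1)\subseteq K_1$ compose to send $K_2$ into $K_1$. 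Hence $XAY\in MS(K_1,K_2)$. The alternative $-X(K_2)=K_2$, $-Y$ inverse nonnegative is identical after writing $XAY=(-X)A(-Y)$.

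For necessity, fix any admissible quadruple $S_1,S_2,Q_1,Q_2$ as in Theorem \ref{theorem-4}. That theorem makes $L_2=P_1LP_2$ an into preserver of minimal semipositivity, and a direct computation gives $L_2(A)=\tilde X A\tilde Y$ with $\tilde X=S_2^{-1}XQ_2$ and $\tilde Y=Q_1^{-1}Y(S_1^t)^{-1}$. By Theorem 2.11 of \cite{dgjjt}, either $\tilde X$ is monomial and $\tilde Y$ is inverse nonnegative, or their negatives are. Using the standard fact that an invertible matrix is monomial precisely when it carries $\reals^m_+$ onto $\reals^m_+$, the $X$-part falls out from a single admissible choice: $\tilde X(\reals^m_+)=\reals^m_+$ gives $X(K_2)=X(Q_2(\reals^m_+))=(S_2\tilde X)(\reals^m_+)=S_2(\reals^m_+)=K_2$, while the branch with $-\tilde X$ monomial gives $-X(K_2)=K_2$. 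These two options are mutually exclusive by pointedness of $K_2$, so the sign is fixed once and for all; say $X(K_2)=K_2$.

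The $Y$-part is where care is needed. Taking inverses, $\tilde Y^{-1}=S_1^t Y^{-1}Q_1\ge 0$ (entrywise) whenever the $\tilde X$-monomial branch holds. The difficulty is that Theorem 2.11 selects the branch only for each individual quadruple, whereas I must know $\tilde Y^{-1}\ge 0$ for \emph{all} admissible $S_1,Q_1$ before I can feed it into a nonnegativity lemma. This is resolved by the sign rigidity just obtained: once $X(K_2)=K_2$, every admissible $\tilde X=S_2^{-1}XQ_2$ satisfies $\tilde X(\reals^m_+)=S_2^{-1}X(K_2)=\reals^m_+$, hence is monomial and not its negative, so the $\tilde X$-monomial branch holds uniformly and $S_1^t Y^{-1}Q_1\ge 0$ for every admissible $S_1,Q_1$. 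Finally, Lemma \ref{(K_1,K_2)-nonnegative} shows $S_1\mapsto S_1^t$ is a bijection from the invertible elements of $\pi(\reals^n_+,K_1^*)$ onto those of $\pi(K_1,\reals^n_+)$, so the previous line reads exactly $\widehat S\,Y^{-1}Q_1\ge 0$ for all invertible $\widehat S\in\pi(K_1,\reals^n_+)$ and $Q_1\in\pi(\reals^n_+,K_1)$; Lemma \ref{lemma-7} then yields $Y^{-1}\in\pi(K_1)$, i.e.\ $Y$ is $K_1$-inverse nonnegative. The branch $X(K_2)=-K_2$ is symmetric and produces the second alternative. I expect this sign-consistency step---promoting the per-quadruple dichotomy of Theorem 2.11 to a uniform one---to be the main obstacle, together with the transpose bookkeeping that aligns $S_1^tY^{-1}Q_1$ with the hypothesis of Lemma \ref{lemma-7}.
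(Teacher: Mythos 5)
Your proof is correct and follows essentially the same route as the paper's: sufficiency by exhibiting $Y^{-1}BX^{-1}$ as a $(K_2,K_1)$-nonnegative left inverse (with semipositivity of $XAY$ supplied by the forward direction of Theorem \ref{theorem-3}), and necessity via Theorem \ref{theorem-4} followed by Theorem 2.11 of \cite{dgjjt} and the cone lemmas. You are in fact more careful than the paper on two points it glosses over --- promoting the per-quadruple dichotomy of Theorem 2.11 to a uniform choice of sign (via $X(K_2)=K_2$ versus $-X(K_2)=K_2$ and pointedness), and the transposition bookkeeping that turns the condition $S_1^tY^{-1}Q_1\ge 0$ into the exact hypothesis of Lemma \ref{lemma-7}, where the paper's citation of Lemma \ref{lemma-9} does not literally pattern-match --- so your write-up is, if anything, tighter.
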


\begin{proof}
If $A \in MS(K_1, K_2)$ then $A$ has a $(K_2, K_1 )$-nonnegative left inverse. Let 
$B$ be a $(K_2, K_1 )$-nonnegative left inverse for $A$. If $X(K_2) = K_2$ and $Y$ 
is $K_1$-inverse nonnegative, $Y^{-1} B X^{-1}$ is $(K_2, K_1 )$-nonnegative and 
a left inverse for $XAY$. Since $XAY \in S(K_1, K_2)$, it follows that 
$XAY \in MS(K_1, K_2)$. 
	
Conversely, suppose that $L(A)$ is an into preserver of $MS(K_1, K_2)$. By 
Theorem \ref{theorem-4}, $L_2(A) = P_1L P_2(A) = 
(S_2)^{-1} X Q_2 A Q_1^{-1} Y (S_1^t)^{-1}$ is minimally semipositive whenever 
$A$ is minimally semipositive, where $S_1, S_2 , Q_1$ and $Q_2$ satisfy the 
assumptions of Theorem \ref{theorem-4}. By Theorem 2.11 of \cite{dgjjt}, either 
$(S_2)^{-1} X Q_2$ is monomial and $Q_1^{-1} Y (S_1^t)^{-1}$ is inverse nonnegative, 
or $-(S_2)^{-1} X Q_2$ is monomial and $-Q_1^{-1} Y (S_1^t)^{-1}$ is inverse 
nonnegative. The result now follows from Lemmas \ref{lemma-7} and \ref{lemma-9}.
\end{proof}
 
As in the case of onto preservers of $MS(K_1,K_2)$, we have the following corollary. 
We assume again that $n < m$ and that the cone $K_2$ is simplicial.

\begin{corollary}\label{cor-2}
The linear map $L(A) = XAY$ is an onto preserver of $MS(K_1, K_2)$ if and only if
$X(K_2) = K_2$ and $Y (K_1) =K_1$, or $-X(K_2) = K_2$ and $-Y (K_1) = K_1$.
\end{corollary}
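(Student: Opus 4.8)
The plan is to mimic the argument used for Corollary \ref{cor-1}, replacing $S(K_1,K_2)$ with $MS(K_1,K_2)$ and the into-preserver characterization of Theorem \ref{theorem-3} by that of Theorem \ref{theorem-5}. Since $n<m$ and $K_2$ is simplicial, Theorem \ref{theorem-1.1}(2) guarantees that $MS(K_1,K_2)$ contains a basis for $M_{m,n}$, so Lemma \ref{lemma-5} is available: a map is an onto preserver of $MS(K_1,K_2)$ precisely when both it and its inverse are into preservers. First I would record that an onto preserver $L(A)=XAY$ is automatically a bijection of $M_{m,n}$, because its image contains the spanning set $MS(K_1,K_2)$; this forces $X$ and $Y$ to be invertible, so that $L^{-1}(A)=X^{-1}AY^{-1}$.

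For the forward implication, I would apply Theorem \ref{theorem-5} to both $L$ and $L^{-1}$. Applied to $L$, it yields (after possibly replacing $(X,Y)$ by $(-X,-Y)$, which leaves $L$ unchanged) that $X(K_2)=K_2$ and $Y$ is $K_1$-inverse nonnegative; applied to $L^{-1}$ it yields either $X^{-1}(K_2)=K_2$ with $Y^{-1}$ being $K_1$-inverse nonnegative, or the sign-reversed alternative. The key step is to rule out the mixed-sign situation: if $X(K_2)=K_2$ while $-X^{-1}(K_2)=K_2$, then $X^{-1}(K_2)=K_2=-K_2$, contradicting pointedness of the proper cone $K_2$. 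Hence the sign alternatives for $L$ and $L^{-1}$ agree, and I then combine the two conclusions about $Y$. From $Y$ being $K_1$-inverse nonnegative I get $Y^{-1}(K_1)\subseteq K_1$, and from $Y^{-1}$ being $K_1$-inverse nonnegative I get $Y(K_1)\subseteq K_1$; since $Y$ is bijective, $K_1=Y(Y^{-1}(K_1))\subseteq Y(K_1)\subseteq K_1$, which forces $Y(K_1)=K_1$. Together with $X(K_2)=K_2$ this is the desired conclusion, and the opposite sign branch produces $-X(K_2)=K_2$ and $-Y(K_1)=K_1$ in the same way.

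For the converse, suppose $X(K_2)=K_2$ and $Y(K_1)=K_1$. Then $Y(K_1)\subseteq K_1$ and $Y^{-1}(K_1)\subseteq K_1$, so $Y$ is $K_1$-inverse nonnegative, and Theorem \ref{theorem-5} shows that $L$ is an into preserver of $MS(K_1,K_2)$. The same theorem applied to $L^{-1}(A)=X^{-1}AY^{-1}$, using $X^{-1}(K_2)=K_2$ and $Y^{-1}(K_1)=K_1$, shows that $L^{-1}$ is an into preserver as well; Lemma \ref{lemma-5} then yields that $L$ is an onto preserver. The sign-reversed hypotheses are handled by writing $L(A)=XAY=(-X)A(-Y)$.

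I expect the main obstacle to lie in the bookkeeping of the forward direction. Theorem \ref{theorem-5} delivers only the one-sided condition that $Y$ is $K_1$-inverse nonnegative (equivalently, that $Y^{-1}$ is $K_1$-nonnegative), rather than the full equality $Y(K_1)=K_1$. Upgrading to equality genuinely requires pushing $L^{-1}$ back through Theorem \ref{theorem-5} and then reconciling the two sign alternatives by invoking pointedness of $K_2$. Making this sign-matching precise, rather than merely asserting it, is the delicate point of the argument.
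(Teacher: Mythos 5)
Your proposal is correct and follows essentially the same route as the paper: reduce ontoness to ``both $L$ and $L^{-1}$ are into preservers'' via the basis from Theorem \ref{theorem-1.1}(2) and Lemma \ref{lemma-5}, then apply Theorem \ref{theorem-5} to $L$ and to $L^{-1}(A)=X^{-1}AY^{-1}$ in both directions. The only difference is that you spell out what the paper leaves implicit in ``one implication follows,'' namely the reconciliation of the two sign alternatives via pointedness of $K_2$ and the upgrade from $Y$ being $K_1$-inverse nonnegative to the equality $Y(K_1)=K_1$; these details are worth recording.
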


\begin{proof}
Suppose the map $L$ is an onto preserver of $MS(K_1, K_2)$. Then L must be invertible 
and both $L$ and $L^{-1}$ are into preservers of $MS(K_1, K_2)$. By the previous 
Theorem, one implication follows.

Conversely, if $X(K_2) = K_2$ and $Y(K_1) = K_1$, then obviously 
$L$ is an onto preserver of $MS(K_1, K_2)$, since for every $B \in MS(K_1, K_2)$, 
we can set $A = X^{-1} B Y^{-1} \in MS(K_1, K_2)$, so that $L(A)= B$ (see also Theorem 
2.11 of \cite{dgjjt}).
\end{proof}

The $n = m$ case is presented below, the into and onto separately. The proof is omitted 
as it is similar to that of Theorem \ref{theorem-5} and follows from Theorem 
\ref{theorem-4}, Lemmas \ref{lemma-7} and \ref{lemma-9} and Theorem 2.10 of \cite{prs-1}. 
Note that we need 
not assume simpliciality of the cone $K_2$ in this case.

\begin{theorem}\label{theorem-5.1}
Let $L(A) = X A Y$ be a linear map on $M_n$ for fixed $X, Y \in M_n$. 
Then $L$ is an into preserver of $MS(K_1, K_2)$ if and only if $X$ is $K_2$-inverse nonnegative and 
$Y$ is $K_1$-inverse nonnegative or $-X$ is $K_2$-inverse nonnegative 
and $-Y$ is $K_1$-inverse nonnegative. 
\end{theorem}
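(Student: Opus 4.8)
The plan is to follow the template of the proof of Theorem \ref{theorem-5}, but with two substitutions dictated by the hypothesis $m=n$: I would replace the rectangular transforms of Theorem \ref{theorem-4} by their $m=n$ versions recorded in the discussion immediately following Lemma \ref{lemma-2} (these use the dual cone and so do \emph{not} require $K_2$ simplicial), and I would invoke Theorem 2.10 of \cite{prs-1} in place of Theorem 2.11 of \cite{dgjjt}. The structural fact that makes the square case clean is that for $m=n$ a one-sided inverse is two-sided, so $A\in MS(K_1,K_2)$ exactly when $A$ is invertible with $A^{-1}\in\pi(K_2,K_1)$: indeed semipositivity is then automatic, since the invertible map $A^{-1}$ sends the open full-dimensional set $K_2^{\circ}$ into $K_1$ and an open subset of $K_1$ must lie in $K_1^{\circ}$.

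For sufficiency I would argue directly. Suppose $X$ is $K_2$-inverse nonnegative and $Y$ is $K_1$-inverse nonnegative (the case of $-X,-Y$ is identical as $(-X)A(-Y)=XAY$). Given $A\in MS(K_1,K_2)$, I compute $(XAY)^{-1}=Y^{-1}A^{-1}X^{-1}$ and chase cones: from $X^{-1}(K_2)\subseteq K_2$, $A^{-1}(K_2)\subseteq K_1$ and $Y^{-1}(K_1)\subseteq K_1$ one gets $(XAY)^{-1}(K_2)\subseteq K_1$, so $XAY$ is invertible with a $(K_2,K_1)$-nonnegative inverse, hence $XAY\in MS(K_1,K_2)$.

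For necessity I would conjugate $L$ into the orthant setting. Fix invertible $S_1\in\pi(\reals^n_+,K_1)$, $T_2\in\pi(\reals^n_+,K_2^{\ast})$, $\hat S_1\in\pi(\reals^n_+,K_1^{\ast})$ and $\hat T_2\in\pi(\reals^n_+,K_2)$, and set $P(A)=(T_2^t)^{-1}AS_1^{-1}$ and $P'(B)=\hat T_2^{-1}B(\hat S_1^t)^{-1}$. By the $m=n$ versions of Lemma \ref{lemma-2}, $P$ carries minimally semipositive matrices into $MS(K_1,K_2)$ and $P'$ carries $MS(K_1,K_2)$ back into the minimally semipositive matrices, so $L_2:=P'LP$ is an into preserver of minimal semipositivity with $L_2(A)=\tilde X A\tilde Y$, where $\tilde X=\hat T_2^{-1}X(T_2^t)^{-1}$ and $\tilde Y=S_1^{-1}Y(\hat S_1^t)^{-1}$. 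Theorem 2.10 of \cite{prs-1} then forces $\tilde X,\tilde Y$ to be inverse nonnegative (or both their negatives); reading off inverses, $T_2^t X^{-1}\hat T_2\in\pi(\reals^n_+)$ and $\hat S_1^t Y^{-1}S_1\in\pi(\reals^n_+)$. By the duality $T\in\pi(\reals^n_+,K^{\ast})\Rightarrow T^t\in\pi(K,\reals^n_+)$ (Lemma \ref{(K_1,K_2)-nonnegative}), the outer factors satisfy $T_2^t\in\pi(K_2,\reals^n_+)$, $\hat T_2\in\pi(\reals^n_+,K_2)$, $\hat S_1^t\in\pi(K_1,\reals^n_+)$ and $S_1\in\pi(\reals^n_+,K_1)$; letting these families range over all admissible invertible matrices, Lemma \ref{lemma-7} applied to $X^{-1}$ (with $K=K_2$) and to $Y^{-1}$ (with $K=K_1$) yields $X^{-1}\in\pi(K_2)$ and $Y^{-1}\in\pi(K_1)$, i.e. $X$ is $K_2$-inverse nonnegative and $Y$ is $K_1$-inverse nonnegative. (Lemma \ref{lemma-9} plays the analogous transferring role if one instead uses the simplicial transforms of Theorem \ref{theorem-4}.)

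The cone-chasing in both directions is routine; the delicate point is entirely in the necessity step. Two items require care. First, one must use the dual-cone square transforms rather than Theorem \ref{theorem-4} verbatim, since the latter presupposes $Q_2(\reals^n_+)=S_2(\reals^n_+)=K_2$ and hence $K_2$ simplicial, which is exactly what we do not want to assume. Second, and this is the real obstacle, Theorem 2.10 of \cite{prs-1} returns a sign alternative for \emph{each} choice of the conjugators, whereas Lemma \ref{lemma-7} needs the nonnegativity of $T_2^t X^{-1}\hat T_2$ (and of $\hat S_1^t Y^{-1}S_1$) \emph{uniformly} over all admissible conjugators; I must therefore show the alternative is globally constant and never flips between the two branches. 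I expect to settle this by observing that the entrywise-nonnegative and entrywise-nonpositive loci of $\tilde X^{-1}$ are relatively closed and disjoint (as $\tilde X^{-1}$ is invertible, hence nonzero), and are interchanged only by the permutation-type moves of the conjugators that preserve entrywise sign, so the branch is locked once it is fixed at a single base conjugator; the same sign then governs $\tilde Y$, giving the stated dichotomy for $X$ and $Y$ simultaneously.
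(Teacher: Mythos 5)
Your proposal is correct and is essentially the proof the paper intends: the paper omits the argument, saying only that it follows the proof of Theorem \ref{theorem-5} with the square-case ($m=n$) transforms recorded after Lemma \ref{lemma-2} in place of Theorem \ref{theorem-4} (so that simpliciality of $K_2$ is not needed), together with Theorem 2.10 of \cite{prs-1} and Lemmas \ref{lemma-7} and \ref{lemma-9} --- exactly the ingredients you assemble, and your sufficiency and conjugation steps are sound. The sign-uniformity issue you isolate in your last paragraph is real but is passed over in silence in the paper's own proofs of Theorems \ref{theorem-3} and \ref{theorem-5} as well, so it does not put you at variance with the paper; just note that your connectivity/permutation-moves sketch is shakier than necessary (the admissible conjugators form a disconnected set), and a cleaner repair is a two-point version of Lemma \ref{lemma-6}: if $u^{t}X^{-1}w$ took both strict signs for $(u,w)\in K_2^{\ast}\times K_2$, one could extend the two witnessing pairs to bases inside $K_2^{\ast}$ and $K_2$, producing a single conjugator pair for which $\widetilde{X}^{-1}$ has entries of both signs, contradicting the dichotomy.
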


\begin{corollary}\label{cor-2.1}
Let $L(A) = X A Y$ be a linear map on $M_n$ for fixed $X, Y \in M_n$. The map 
$L$ is an onto preserver of $MS(K_1, K_2)$ if and only if $X(K_2) = K_2$ and 
$Y (K_1) =K_1$, or $-X(K_2) = K_2$ and $-Y(K_1) = K_1$.
\end{corollary}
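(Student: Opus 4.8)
The plan is to deduce Corollary \ref{cor-2.1} from the into-preserver characterization in Theorem \ref{theorem-5.1} together with the basis-and-inverse principle of Lemma \ref{lemma-5}, exactly in the spirit of the proof of Corollary \ref{cor-2}. First I would record that $MS(K_1,K_2)$ contains a basis for $M_n$: this is Theorem \ref{theorem-1.1}(2), and since $m=n$ here the simpliciality hypothesis on $K_2$ is not needed. Consequently, by Lemma \ref{lemma-5}, the map $L$ is an onto preserver of $MS(K_1,K_2)$ if and only if $L$ is invertible and both $L$ and $L^{-1}$ are into preservers of $MS(K_1,K_2)$.

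For the forward direction, assume $L$ is an onto preserver, so $L$ and $L^{-1}$ are into preservers. Applying Theorem \ref{theorem-5.1} to $L$ and, after replacing $(X,Y)$ by $(-X,-Y)$ if necessary (which leaves $L$ unchanged), I may assume $X$ is $K_2$-inverse nonnegative and $Y$ is $K_1$-inverse nonnegative; in particular $X$ and $Y$ are invertible, so $L^{-1}(A)=X^{-1}AY^{-1}$. By Definition \ref{defn-3} this says $X^{-1}(K_2)\subseteq K_2$ and $Y^{-1}(K_1)\subseteq K_1$. Now apply Theorem \ref{theorem-5.1} to $L^{-1}$: either $X^{-1}$ is $K_2$-inverse nonnegative and $Y^{-1}$ is $K_1$-inverse nonnegative, giving $X(K_2)\subseteq K_2$ and $Y(K_1)\subseteq K_1$, or the opposite-sign branch holds, giving $-X(K_2)\subseteq K_2$ and $-Y(K_1)\subseteq K_1$.

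The delicate point, and the one I expect to require the most care, is ruling out this mixed-sign branch and then upgrading the two inclusions to equalities. For the mixed branch: from $X^{-1}(K_2)\subseteq K_2$ I get $K_2\subseteq X(K_2)$, so any nonzero $v\in K_2$ equals $Xu$ with $u\in K_2$; then $-v=-Xu\in -X(K_2)\subseteq K_2$, forcing $v\in K_2\cap(-K_2)=\{0\}$ by pointedness of the proper cone $K_2$, a contradiction. Hence only the matching-sign branch survives, and combining $X^{-1}(K_2)\subseteq K_2$ with $X(K_2)\subseteq K_2$ yields $K_2=X(X^{-1}(K_2))\subseteq X(K_2)\subseteq K_2$, that is $X(K_2)=K_2$; the same argument gives $Y(K_1)=K_1$. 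Had I started in the other branch for $L$, the symmetric computation would instead produce $-X(K_2)=K_2$ and $-Y(K_1)=K_1$, which is the second alternative in the statement.

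For the converse, suppose $X(K_2)=K_2$ and $Y(K_1)=K_1$ (the case $-X(K_2)=K_2$, $-Y(K_1)=K_1$ being identical after a sign change). Then $X,Y$ are invertible with $X^{-1}(K_2)=K_2$ and $Y^{-1}(K_1)=K_1$, so $X$ is $K_2$-inverse nonnegative and $Y$ is $K_1$-inverse nonnegative, and likewise for $X^{-1},Y^{-1}$. Theorem \ref{theorem-5.1} then shows that $L$ and $L^{-1}(A)=X^{-1}AY^{-1}$ are both into preservers of $MS(K_1,K_2)$, whence $L$ is onto by Lemma \ref{lemma-5}; alternatively one checks surjectivity directly by noting that $A=X^{-1}BY^{-1}\in MS(K_1,K_2)$ satisfies $L(A)=B$ for each $B\in MS(K_1,K_2)$, exactly as in Corollary \ref{cor-2}.
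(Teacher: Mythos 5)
Your proof is correct and takes essentially the same route the paper intends: the paper omits the proof of this corollary, saying it goes as in Corollary \ref{cor-2}, namely via the basis of $MS(K_1,K_2)$ from Theorem \ref{theorem-1.1}(2), Lemma \ref{lemma-5}, and the into-characterization of Theorem \ref{theorem-5.1}, with the converse checked directly. Your explicit elimination of the mixed-sign branch (via pointedness of $K_2$) and the upgrade of the inclusions $X(K_2)\subseteq K_2$, $X^{-1}(K_2)\subseteq K_2$ to the equality $X(K_2)=K_2$ simply supply details the paper leaves implicit.
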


\bigskip
\noindent
\subsubsection{\textbf{General onto preservers of $S(K_1, K_2)$}} \hspace{\fill}\\

We now turn our attention to general onto preservers of $S(K_1,K_2)$. Our main result 
is the following.

\begin{theorem}\label{onto-preservers-proper-cones}
Let $L$ be a linear map on $M_{m,n}$. If $L$ is an onto preserver of $S(K_1,K_2)$, 
then $L(A) = \widetilde{X} A \widetilde{Y}$ for all $A \in M_{m,n}$, where 
$\widetilde{X}(K_2) = K_2$ and $\widetilde{Y}(K_1) = K_1$.	
\end{theorem}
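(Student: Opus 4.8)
The plan is to reduce a general onto preserver of $S(K_1,K_2)$ to the nonnegative-orthant case already settled in Theorem \ref{mxn-main-thm}, to extract from this the multiplicative form $L(A)=\widetilde{X}A\widetilde{Y}$, and then to read off the cone equalities directly from Corollary \ref{cor-1}. All of the genuinely hard analytic work is carried by Theorem \ref{mxn-main-thm}; the present argument is essentially a conjugation-and-bookkeeping argument.

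First I would record that $L$ is invertible. Since $S(K_1,K_2)$ contains a basis for $M_{m,n}$ by Theorem \ref{theorem-1.1}, Lemma \ref{lemma-5} applies and shows that the onto preserver $L$ is invertible with both $L$ and $L^{-1}$ into preservers of $S(K_1,K_2)$. Next I would conjugate $L$ into a semipositivity preserver on the orthants. Choosing invertible matrices $S_1\in\pi(\reals^m_+,K_2)$, $Q_1\in\pi(K_2,\reals^m_+)$, $S_2\in\pi(\reals^n_+,K_1)$ and $Q_2\in\pi(K_1,\reals^n_+)$ that satisfy the interior hypotheses of Theorem \ref{theorem-2} (their existence is guaranteed by the constructions around Lemma \ref{lemma-6} and Lemma \ref{lemma-8}, and such matrices are already used in the proof of Theorem \ref{theorem-1.1}), I set $T_1(A)=Q_1AQ_2^{-1}$ and $T_2(A)=S_1AS_2^{-1}$. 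Theorem \ref{theorem-2} then makes $L_1=T_1LT_2$ an into preserver of $S(\reals^n_+,\reals^m_+)$, and since $L$, $S_1$ and $Q_1$ are invertible, so are $T_1$, $T_2$ and hence $L_1$.

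By Theorem \ref{mxn-main-thm} (using the standing assumption $m\geq n\geq 2$), $L_1(A)=XAY$ for an invertible row positive $X\in M_m$ and an inverse nonnegative $Y\in M_n$. I would then unwind the conjugation: from $Q_1L(S_1AS_2^{-1})Q_2^{-1}=XAY$, the substitution $A=S_1^{-1}CS_2$ gives $L(C)=Q_1^{-1}X S_1^{-1}\,C\,S_2 Y Q_2$ for every $C\in M_{m,n}$, so that $L(A)=\widetilde{X}A\widetilde{Y}$ with $\widetilde{X}=Q_1^{-1}XS_1^{-1}$ and $\widetilde{Y}=S_2YQ_2$.

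Finally, since $L(A)=\widetilde{X}A\widetilde{Y}$ is by hypothesis an onto preserver of $S(K_1,K_2)$, Corollary \ref{cor-1} forces either $\widetilde{X}(K_2)=K_2$ and $\widetilde{Y}(K_1)=K_1$, or $-\widetilde{X}(K_2)=K_2$ and $-\widetilde{Y}(K_1)=K_1$; in the second case I absorb the signs by replacing $(\widetilde{X},\widetilde{Y})$ with $(-\widetilde{X},-\widetilde{Y})$, which leaves $L$ unchanged and yields the asserted equalities. The only place calling for care is the conjugation step itself---keeping straight which auxiliary factor lands on the left and which on the right, and checking that the interior conditions demanded by Theorem \ref{theorem-2} match exactly the properties the auxiliary matrices are chosen to have. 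I do not expect any genuine obstacle beyond this bookkeeping, since the structural theorem on orthant preservers and Corollary \ref{cor-1} do all the substantive work.
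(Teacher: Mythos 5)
Your proposal is correct and follows essentially the same route as the paper's own proof: conjugate $L$ to an orthant preserver via Theorem \ref{theorem-2}, apply Theorem \ref{mxn-main-thm} to get the form $XAY$, unwind to obtain $L(A)=\widetilde{X}A\widetilde{Y}$, and finish with Corollary \ref{cor-1}. The only difference is that you spell out details the paper leaves implicit---the invertibility of $L$ via Theorem \ref{theorem-1.1} and Lemma \ref{lemma-5}, the explicit computation $\widetilde{X}=Q_1^{-1}XS_1^{-1}$, $\widetilde{Y}=S_2YQ_2$, and the absorption of the sign ambiguity from Corollary \ref{cor-1}---all of which are correct.
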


\begin{proof}
By Theorem \ref{theorem-2}, we know that for invertible maps $T_1$ and $T_2$, the map 
$L_1 = T_1LT_2$ is an invertible into linear preserver of $S(\reals^n_+,\reals^m_+)$. 
From Theorem \ref{mxn-main-thm}, we infer that $L_1(A) = XAY$ for an invertible 
row positive matrix $X$ and an inverse nonnegative matrix $Y$. It follows that 
$L(A) = \widetilde{X} A \widetilde{Y}$ for some $\widetilde{X} \in M_m$ and 
$\widetilde{Y} \in M_n$. Finally, Corollary \ref{cor-1} yields the desired conclusion. 
\end{proof}

The following result was proved by A. Chandrashekaran et al.

\begin{theorem}
(Theorem 2.3, \cite{csv-2}) Let $A \in M_{m,n}$ and let $K_1, \ K_2$ be proper 
cones in $\reals^n$ and $\reals^m$, respectively. If $A+B \in S(K_1,K_2)$ for every 
$B \in S(K_1,K_2)$ then $A \in \pi(K_1,K_2)$.
\end{theorem}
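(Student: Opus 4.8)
The plan is to prove the contrapositive and to certify non-semipositivity by the Theorem of the Alternative (Theorem \ref{alternative}). Assume $A \notin \pi(K_1,K_2)$, so that $Aw_0 \notin K_2$ for some $w_0 \in K_1$. Since $K_2$ is a closed convex cone (equivalently $K_2 = K_2^{**}$), separation yields a vector $0 \neq y_0 \in K_2^*$ with $\langle y_0, Aw_0\rangle < 0$. The aim is then to produce a single $B \in S(K_1,K_2)$ for which $A+B$ is not semipositive, contradicting the hypothesis. The device for the last step is alternative (b): it suffices to exhibit $0 \neq y \in K_2^*$ with $-(A+B)^t y \in K_1^*$, because then alternative (a) must fail and hence no $x \in K_1^\circ$ can satisfy $(A+B)x \in K_2^\circ$.

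I would take $y = y_0$ and force $B^t y_0 = -A^t y_0$, so that $-(A+B)^t y_0 = 0 \in K_1^*$ automatically. Everything then reduces to constructing a \emph{semipositive} $B$ obeying this one linear constraint on $B^t y_0$. To guarantee semipositivity I first upgrade $w_0$ to an interior witness: the map $x \mapsto \langle y_0, Ax\rangle$ is continuous and strictly negative at $w_0 \in K_1$, and $K_1^\circ$ is dense in $K_1$ by Lemma \ref{a density argument}, so there is $x_1 \in K_1^\circ$ with $\rho := -\langle y_0, Ax_1\rangle > 0$. Fixing any $v_0 \in K_2^\circ$ (for which $\langle v_0, y_0\rangle > 0$ since $y_0 \in K_2^*\setminus\{0\}$) and rescaling produces $v_1 \in K_2^\circ$ with $\langle v_1, y_0\rangle = \rho$.

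I would then build $B$ to satisfy the column constraint $Bx_1 = v_1$ (which forces semipositivity because $x_1 \in K_1^\circ$ and $v_1 \in K_2^\circ$) together with the row constraint $B^t y_0 = -A^t y_0$ (which forces the alternative to fail for $A+B$). Writing $c = -A^t y_0$ and choosing $\phi, \psi$ with $\langle \phi, x_1\rangle = 1$ and $\langle \psi, y_0\rangle = 1$, the explicit matrix $B = v_1\phi^t + \psi c^t - \rho\,\psi\phi^t$ does the job, as a one-line check gives $Bx_1 = v_1$ and $y_0^t B = c^t$. Consequently $-(A+B)^t y_0 = 0 \in K_1^*$ with $0 \neq y_0 \in K_2^*$, so Theorem \ref{alternative} yields $A+B \notin S(K_1,K_2)$, contradicting the standing hypothesis; hence $A \in \pi(K_1,K_2)$.

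The main obstacle is the simultaneous realizability of the two prescriptions on $B$. Both $Bx_1 = v_1$ and $B^t y_0 = c$ pin down the same bilinear value $y_0^t B x_1$, and the construction closes only because $\langle v_1, y_0\rangle$ and $\langle x_1, c\rangle = -\langle y_0, Ax_1\rangle$ were arranged to coincide, namely to equal $\rho$. The density lemma is what makes this possible: it delivers an \emph{interior} point $x_1$ at which $\langle y_0, Ax_1\rangle$ is still strictly negative, which is needed both for $x_1$ to serve as an interior semipositivity witness and for $\rho > 0$ so that a matching $v_1 \in K_2^\circ$ exists.
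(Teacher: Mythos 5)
Your proof is correct. Note first that the paper itself does not prove this statement: it is quoted verbatim as Theorem 2.3 of \cite{csv-2}, so there is no in-paper argument to compare against; what you have produced is a self-contained proof of a result the authors only cite. Checking your steps: the contrapositive reduction is the right formulation; since $K_2$ is a proper (hence closed convex) cone, $K_2=K_2^{**}$ gives the separating functional $0 \neq y_0 \in K_2^*$ with $\langle y_0, Aw_0\rangle<0$; Lemma \ref{a density argument} plus continuity legitimately upgrades $w_0$ to $x_1 \in K_1^\circ$ with $\rho=-\langle y_0,Ax_1\rangle>0$; and the standard fact $\langle v_0,y_0\rangle>0$ for $v_0\in K_2^\circ$, $0\neq y_0\in K_2^*$ makes the rescaled $v_1$ well defined. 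The heart of the argument, the compatibility $y_0^tBx_1=\langle v_1,y_0\rangle=\rho=\langle c,x_1\rangle$, is exactly what makes the two linear constraints $Bx_1=v_1$ and $B^ty_0=c=-A^ty_0$ simultaneously solvable, and your explicit $B=v_1\phi^t+\psi c^t-\rho\,\psi\phi^t$ (with, e.g., $\phi=x_1/\|x_1\|^2$, $\psi=y_0/\|y_0\|^2$, both nonzero since $K_1$ is pointed and $y_0\neq 0$) verifiably satisfies both: $Bx_1=v_1+\rho\psi-\rho\psi=v_1$ and $y_0^tB=\rho\phi^t+c^t-\rho\phi^t=c^t$. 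Then $B\in S(K_1,K_2)$ because $x_1\in K_1^\circ$ and $v_1\in K_2^\circ$, while $-(A+B)^ty_0=0\in K_1^*$ puts $A+B$ in alternative (b) of Theorem \ref{alternative}; since the two alternatives are exclusive, no $x\in K_1$ (a fortiori no $x\in K_1^\circ$) maps into $K_2^\circ$, so $A+B\notin S(K_1,K_2)$, contradicting the hypothesis. One stylistic remark: your construction shows more than is needed, namely that the failing perturbation $B$ can always be chosen of rank at most three, which is a pleasant quantitative byproduct of making the argument explicit rather than invoking a general solvability claim for the two-sided constraint.
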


We present below connections between onto preservers of $S(K_1,K_2)$ and other 
preserver properties of maps related to $L$. We begin with the following result.

\begin{lemma} \label{lemma-12}
Suppose that $L$ is an onto linear preserver of $S(K_1, K_2)$. Then $L$ is an 
automorphism of the cone $\pi(K_1, K_2)$.
\end{lemma}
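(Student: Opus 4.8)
The plan is to characterize $\pi(K_1,K_2)$ purely in terms of the preserved set $S(K_1,K_2)$ and then transport this characterization through $L$. First I would establish the set-theoretic identity
\[
\pi(K_1,K_2) = \{A \in M_{m,n} : A + B \in S(K_1,K_2) \text{ for all } B \in S(K_1,K_2)\}.
\]
The inclusion $\supseteq$ is exactly Theorem 2.3 of \cite{csv-2}. For $\subseteq$, take $A \in \pi(K_1,K_2)$ and any $B \in S(K_1,K_2)$; choosing $x \in K_1^\circ$ with $Bx \in K_2^\circ$, we have $Ax \in K_2$ since $A(K_1) \subseteq K_2$, so $(A+B)x = Ax + Bx \in K_2 + K_2^\circ \subseteq K_2^\circ$, whence $A + B \in S(K_1,K_2)$. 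This identity is the crux, since it describes membership in $\pi(K_1,K_2)$ entirely through the set $S(K_1,K_2)$ that $L$ is assumed to preserve.

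Next I would record the invertibility bookkeeping. Because $S(K_1,K_2)$ contains a basis of $M_{m,n}$ by Theorem \ref{theorem-1.1}, the onto map $L$ has range containing a basis, hence $L$ is a linear bijection of $M_{m,n}$; moreover, by Lemma \ref{lemma-5}, both $L$ and $L^{-1}$ are into preservers of $S(K_1,K_2)$.

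The main argument then proves $L(\pi(K_1,K_2)) \subseteq \pi(K_1,K_2)$. Fix $A \in \pi(K_1,K_2)$; by the identity above it suffices to show $L(A) + C \in S(K_1,K_2)$ for every $C \in S(K_1,K_2)$. Since $L$ is onto on $S(K_1,K_2)$, write $C = L(B)$ with $B \in S(K_1,K_2)$. Then $A + B \in S(K_1,K_2)$ by the easy inclusion established above, and as $L$ preserves $S(K_1,K_2)$ we obtain $L(A) + C = L(A+B) \in S(K_1,K_2)$, as required. Applying the identical argument to $L^{-1}$, which is likewise an onto preserver of $S(K_1,K_2)$, gives $L^{-1}(\pi(K_1,K_2)) \subseteq \pi(K_1,K_2)$ and hence $\pi(K_1,K_2) \subseteq L(\pi(K_1,K_2))$. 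Combining the two inclusions yields $L(\pi(K_1,K_2)) = \pi(K_1,K_2)$, so the bijection $L$ is an automorphism of the cone $\pi(K_1,K_2)$.

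The step I expect to carry the real weight is the characterization identity, but since one direction is quoted and the other is a one-line interior-plus-cone computation, the argument is genuinely short; the only point to watch is the symmetric use of the onto hypothesis for both $L$ and $L^{-1}$, which is what upgrades containment to equality. As an alternative I could bypass the identity entirely and invoke Theorem \ref{onto-preservers-proper-cones}, writing $L(A) = \widetilde{X}A\widetilde{Y}$ with $\widetilde{X}(K_2) = K_2$ and $\widetilde{Y}(K_1) = K_1$, and then check directly that $A(K_1) \subseteq K_2$ forces $\widetilde{X}A\widetilde{Y}(K_1) = \widetilde{X}A(K_1) \subseteq \widetilde{X}(K_2) = K_2$, with the reverse inclusion following from $L^{-1}(A) = \widetilde{X}^{-1}A\widetilde{Y}^{-1}$.
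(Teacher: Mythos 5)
Your proposal is correct. The paper itself writes out no argument for this lemma --- its ``proof'' is only the pointer to Theorem 2.6 of \cite{csv-2} ``with suitable modifications'' --- and what you have written is exactly the argument that citation encodes: characterize $\pi(K_1,K_2)$ additively in terms of $S(K_1,K_2)$, with the nontrivial inclusion being Theorem 2.3 of \cite{csv-2} (which the paper quotes immediately before the lemma, evidently for this purpose) and the easy inclusion being the one-line computation $K_2 + K_2^\circ \subseteq K_2^\circ$; then use $L(S(K_1,K_2)) = S(K_1,K_2)$ to write any $C \in S(K_1,K_2)$ as $L(B)$ and conclude $L(A)+C = L(A+B) \in S(K_1,K_2)$, and run the same argument for $L^{-1}$ (legitimate, since the basis from Theorem \ref{theorem-1.1} forces $L$ to be bijective) to upgrade $L(\pi(K_1,K_2)) \subseteq \pi(K_1,K_2)$ to equality. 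So in effect you have supplied the proof the paper leaves to the reference, and every step checks out. Your alternative route via Theorem \ref{onto-preservers-proper-cones} is also valid and non-circular within the paper's ordering (that theorem precedes the lemma and does not use it), but it invokes the full structure theorem $L(A)=\widetilde{X}A\widetilde{Y}$, which rests on the heavy rank-one machinery of the orthant case; the additive argument is both more elementary and the one actually intended.
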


\begin{proof}
The proof can be found in Theorem 2.6 of \cite{csv-2}, by making suitable 
modifications.
\end{proof}

We now prove that if $L$ is an onto preserver of $S(K_1,K_2)$, then a map that 
is equivalent to $L$ will be a preserver of $\pi(\reals^n_+,\reals^m_+)$.

\begin{theorem}\label{theorem-8}
Let $S_1 \in \pi (\reals^n_+,K_1)$, $S_2 \in \pi (\reals^n_+,K_1^*)$, $T_1 \in 
\pi(\reals^m_+,K_2^*)$ and $T_2 \in \pi (\reals^m_+,K_2)$ be invertible matrices. 
Let $\widetilde{T_1}(A)= T_1^t A S_1$ and $\widetilde{T_2}(A) = T_2 A S_2^t$.
If $L$: $M_{m,n} \rightarrow M_{m,n}$ is an onto preserver of 
$S(K_1, K_2)$, then $\widetilde{L_1} = \widetilde{T_1}L \widetilde{T_2}$ is an into 
preserver of $\pi (\reals^n_+,\reals^m_+)$.
\end{theorem}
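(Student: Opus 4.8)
The plan is to verify the single containment $\widetilde{L_1}(\pi(\reals^n_+,\reals^m_+)) \subseteq \pi(\reals^n_+,\reals^m_+)$ by chasing cones through the factorisation $\widetilde{L_1}(A) = T_1^t\, L(T_2 A S_2^t)\, S_1$. The outer conjugating maps $\widetilde{T_2}$ and $\widetilde{T_1}$ should carry nonnegative matrices onto $\pi(K_1,K_2)$ and back, while the automorphism property of $L$ handles the middle factor. First I would record two dual containments obtained from Lemma \ref{(K_1,K_2)-nonnegative}. Since $S_2 \in \pi(\reals^n_+,K_1^*)$, that is $S_2(\reals^n_+) \subseteq K_1^*$, the lemma gives $S_2^t\big((K_1^*)^*\big) \subseteq (\reals^n_+)^*$; using $(K_1^*)^* = K_1$ and the self-duality of $\reals^n_+$, this reads $S_2^t(K_1) \subseteq \reals^n_+$. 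Likewise, from $T_1 \in \pi(\reals^m_+,K_2^*)$ one obtains $T_1^t(K_2) \subseteq \reals^m_+$.

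Next I would show that $\widetilde{T_2}$ sends nonnegative matrices into $\pi(K_1,K_2)$. Let $A \in \pi(\reals^n_+,\reals^m_+)$ and $x \in K_1$. Then $S_2^t x \in \reals^n_+$ by the first containment, $A S_2^t x \in \reals^m_+$ by nonnegativity of $A$, and $T_2 A S_2^t x \in K_2$ since $T_2(\reals^m_+) \subseteq K_2$; hence $\widetilde{T_2}(A) = T_2 A S_2^t \in \pi(K_1,K_2)$. By Lemma \ref{lemma-12}, $L$ is an automorphism of the cone $\pi(K_1,K_2)$, so in particular $L(\widetilde{T_2}(A)) \in \pi(K_1,K_2)$. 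A symmetric argument then shows $\widetilde{T_1}$ maps $\pi(K_1,K_2)$ back into $\pi(\reals^n_+,\reals^m_+)$: for $C \in \pi(K_1,K_2)$ and $z \in \reals^n_+$ we have $S_1 z \in K_1$ (as $S_1(\reals^n_+) \subseteq K_1$), so $C S_1 z \in K_2$, and finally $T_1^t C S_1 z \in \reals^m_+$ by the second containment. Composing the three steps yields $\widetilde{L_1}(A) \in \pi(\reals^n_+,\reals^m_+)$, as required.

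I do not expect a genuine obstacle here: the argument is a bookkeeping exercise in cone containments. The only subtlety is the correct application of the duality Lemma \ref{(K_1,K_2)-nonnegative} together with the identities $(K^*)^* = K$ and $(\reals^k_+)^* = \reals^k_+$ to convert the hypotheses on $S_2$ and $T_1$ into the transpose containments $S_2^t(K_1) \subseteq \reals^n_+$ and $T_1^t(K_2) \subseteq \reals^m_+$; getting these transposes and their cones paired correctly is where an error would most likely creep in. One point worth flagging is that the conclusion uses only the \emph{into} half of the automorphism statement of Lemma \ref{lemma-12}, so the fact that $L$ is \emph{onto} on $S(K_1,K_2)$ enters solely through that lemma.
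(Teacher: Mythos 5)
Your proof is correct and follows essentially the same route as the paper's: conjugate by $\widetilde{T_2}$ to land in $\pi(K_1,K_2)$, apply Lemma \ref{lemma-12} to push the middle factor through $L$, then conjugate by $\widetilde{T_1}$ to return to $\pi(\reals^n_+,\reals^m_+)$. The only difference is that you spell out the duality bookkeeping (via Lemma \ref{(K_1,K_2)-nonnegative}, giving $S_2^t(K_1) \subseteq \reals^n_+$ and $T_1^t(K_2) \subseteq \reals^m_+$) and the resulting cone-chasing, which the paper's proof asserts without detail.
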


\begin{proof}
Let $A \in \pi (\reals^n_+,\reals^m_+)$. Then 
$\widetilde{T_2} (A) \in \pi ( K_1, K_2)$. By Lemma \ref{lemma-12}, we know that 
$L$ is an onto preserver of $\pi( K_1,K_2)$, so that 
$L \widetilde{T_2} (A) \in \pi (K_1,K_2)$. Hence, 
$\widetilde{L_1} = \widetilde{T_1}L \widetilde{T_2}(A) \in \pi (\reals^n_+,\reals^m_+)$. 
\end{proof}

\begin{remark}
It follows from the above result that when $L$ is an onto preserver of $S(K_1,K_2)$, 
the map $\widetilde{L}$ is also of the form $A \mapsto \widetilde{X} A \widetilde{Y}$, 
for some invertible matrices $\widetilde{X}$ and $\widetilde{Y}$. It can be easily 
seen that $\widetilde{X} = T_1^t X T_2$ and $\widetilde{Y} = S_2^t Y S_1$, which are 
nonnegative with respect to $\reals^m_+$ and $\reals^n_+$, respectively.
\end{remark}

\bigskip
\noindent
\subsubsection{\textbf{Preservers of left semipositivity}} \hspace{\fill}\\

We end with the notion of left semipositivity and a preserver result concerning the 
same.

\begin{definition}
Let $A \in M_{m,n}$. We say $A$ is left $(K_1, K_2)$-semipositive if there exists 
$x \in K_2^{\ast}$ such that $A^tx \in (K_1^{\ast})^\circ$.
\end{definition}

The set of all left $(K_1, K_2 )$-semipositive matrices will be denoted by 
$LS(K_1, K_2)$.

\begin{lemma}\label{lemma-13}
Let $K_1$ and $K_2$ be proper cones in $\mathbb{R}^n$ and $\mathbb{R}^m$, respectively. 
Then the following hold:
\begin{enumerate}
\item Let $Q_1 \in \pi (K_2, \reals^m_+)$  and $Q_2 \in \pi (K_1, \reals^n_+)$ be invertible. If $A \in LS(\reals^n_+, \reals^m_+)$, then 
$Q_1^{-1} A Q_2 \in LS(K_1,K_2))$.

\item Let $S_1 \in \pi (\reals^m_+,K_2)$  and $S_2 \in \pi (\reals^n_+, K_1)$ be 
invertible. If $B \in LS(K_1,K_2))$, then 
$S_1^{-1} B S_2 \in LS(\reals^n_+, \reals^m_+)$.
\end{enumerate}
\end{lemma}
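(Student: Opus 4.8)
The plan is to prove both parts by the same mechanism: left semipositivity is a condition on the transpose, the congruence-type maps act on $A^t$ through the transposed factors, and a single judiciously chosen test vector transfers the witnessing data from the source space to the target. The governing identity is that an invertible factor and its inverse cancel on the transpose side, so a witness for one space is pushed forward to a witness for the other.

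For part (1), I would begin with $A \in LS(\reals^n_+, \reals^m_+)$. Since the orthants are self-dual, the definition supplies $x \in \reals^m_+$ with $A^t x \in (\reals^n_+)^\circ$. Writing $B = Q_1^{-1} A Q_2$, so that $B^t = Q_2^t A^t (Q_1^t)^{-1}$, I would take the test vector $y = Q_1^t x$; then $B^t y = Q_2^t A^t x$. Two memberships remain: that $y \in K_2^*$ and that $B^t y \in (K_1^*)^\circ$. The first is immediate, because $Q_1 \in \pi(K_2, \reals^m_+)$ gives, via Lemma \ref{(K_1,K_2)-nonnegative}, the inclusion $Q_1^t(\reals^m_+) \subseteq K_2^*$, so $y = Q_1^t x \in K_2^*$. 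For the second, $Q_2 \in \pi(K_1, \reals^n_+)$ gives $Q_2^t(\reals^n_+) \subseteq K_1^*$; since $Q_2^t$ is invertible it is a homeomorphism and carries $(\reals^n_+)^\circ$ onto the interior of its image, which by monotonicity of the interior operation sits inside $(K_1^*)^\circ$. As $A^t x \in (\reals^n_+)^\circ$, this yields $B^t y \in (K_1^*)^\circ$ and hence $B \in LS(K_1, K_2)$.

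Part (2) is the mirror image, interchanging the orthants with the proper cones. Starting from $B \in LS(K_1, K_2)$ with witness $x \in K_2^*$ and $B^t x \in (K_1^*)^\circ$, I would set $z = S_1^t x$ and compute $(S_1^{-1} B S_2)^t z = S_2^t B^t x$. Here $z \in \reals^m_+$ follows from $S_1 \in \pi(\reals^m_+, K_2)$ through $S_1^t(K_2^*) \subseteq \reals^m_+$, while $S_2^t B^t x \in (\reals^n_+)^\circ$ follows from $S_2^t(K_1^*) \subseteq \reals^n_+$ together with the same homeomorphism argument applied to $S_2^t$.

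The step I would watch most closely, and the only genuinely nontrivial point, is the promotion of the interior condition. Lemma \ref{(K_1,K_2)-nonnegative} delivers only that the transposed factor maps the closed dual cone into the closed dual cone, whereas the definition of left semipositivity demands that the relevant image land in the open interior. The resolution uses invertibility decisively: an invertible linear map is a homeomorphism, so it sends the interior of the source cone to the interior of its image cone, and monotonicity of the interior under inclusion then places that image interior inside the interior of the target dual cone. This is exactly where the invertibility hypotheses on $Q_1, Q_2, S_1, S_2$ are indispensable.
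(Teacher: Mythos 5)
Your proof is correct, and there is nothing in the paper to compare it against: the authors state Lemma \ref{lemma-13} with no proof at all (just as they do for the companion Lemma \ref{lemma-1}), so your argument fills a genuine omission rather than duplicating or diverging from a printed one. The witness-transfer mechanism is exactly right: writing $B = Q_1^{-1}AQ_2$, the identity $B^t(Q_1^t x) = Q_2^t A^t x$ reduces everything to the two dual-cone inclusions supplied by Lemma \ref{(K_1,K_2)-nonnegative}, and you correctly isolate the only delicate point, namely upgrading $Q_2^t(\reals^n_+)\subseteq K_1^*$ to an interior statement. Your resolution --- an invertible linear map is a homeomorphism, hence carries $(\reals^n_+)^\circ$ onto $\operatorname{int}\bigl(Q_2^t(\reals^n_+)\bigr)$, which monotonicity of the interior places inside $(K_1^*)^\circ$ --- is exactly where the invertibility hypotheses earn their keep, and the mirror argument for part (2) goes through verbatim with $S_1^t, S_2^t$ in place of $Q_1^t, Q_2^t$. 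One could alternatively argue by contradiction via the Theorem of the Alternative (Theorem \ref{alternative}), as the authors do for Lemma \ref{lemma-2}, but your direct approach is cleaner and is almost certainly the argument the authors intended to leave to the reader.
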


The following theorem can be proved and the proof follows similar to 
Lemma 3.2 of \cite{dgjjt}.

\begin{theorem}
If $L$: $M_{m,n} \rightarrow M_{m,n}$ is an onto preserver of $S(K_1, K_2)$, 
then $L$ is also an onto preserver of $LS(K_1, K_2)$.
\end{theorem}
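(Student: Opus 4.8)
The plan is to reduce everything to the structural description of onto preservers that we have already obtained. By Theorem \ref{onto-preservers-proper-cones}, an onto preserver $L$ of $S(K_1,K_2)$ is forced to have the form $L(A) = \widetilde{X} A \widetilde{Y}$ for all $A \in M_{m,n}$, with $\widetilde{X}(K_2) = K_2$ and $\widetilde{Y}(K_1) = K_1$. Since $K_1$ and $K_2$ are full-dimensional, these equalities force $\widetilde{X}$ and $\widetilde{Y}$ to be invertible, and $L^{-1}(A) = \widetilde{X}^{-1} A \widetilde{Y}^{-1}$ again satisfies $\widetilde{X}^{-1}(K_2) = K_2$ and $\widetilde{Y}^{-1}(K_1) = K_1$. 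So I would first observe that it suffices to prove that any map $A \mapsto \widetilde{X} A \widetilde{Y}$, with $\widetilde{X}$ and $\widetilde{Y}$ cone automorphisms of $K_2$ and $K_1$ respectively, carries $LS(K_1,K_2)$ into itself; applying this to $L$ and to $L^{-1}$ then yields $L(LS(K_1,K_2)) = LS(K_1,K_2)$.

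Next I would transport the two cone automorphisms to the dual cones. From $\widetilde{X}(K_2) = K_2$, Lemma \ref{(K_1,K_2)-nonnegative} applied to $\widetilde{X}$ yields $\widetilde{X}^t(K_2^*) \subseteq K_2^*$, and applied to $\widetilde{X}^{-1}$ yields $(\widetilde{X}^t)^{-1}(K_2^*) \subseteq K_2^*$, so that $\widetilde{X}^t(K_2^*) = K_2^*$. The same argument gives $\widetilde{Y}^t(K_1^*) = K_1^*$; being an invertible self-map of the proper cone $K_1^*$, the map $\widetilde{Y}^t$ then sends $(K_1^*)^\circ$ onto $(K_1^*)^\circ$.

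The verification itself is then a one-line substitution. Given $A \in LS(K_1,K_2)$ choose $x \in K_2^*$ with $A^t x \in (K_1^*)^\circ$, and set $z := (\widetilde{X}^t)^{-1} x \in K_2^*$. Using $L(A)^t = \widetilde{Y}^t A^t \widetilde{X}^t$ I would compute
\begin{equation*}
L(A)^t z = \widetilde{Y}^t A^t \widetilde{X}^t (\widetilde{X}^t)^{-1} x = \widetilde{Y}^t (A^t x) \in (K_1^*)^\circ,
\end{equation*}
the membership holding because $A^t x \in (K_1^*)^\circ$ and $\widetilde{Y}^t$ preserves this interior. Hence $L(A) \in LS(K_1,K_2)$, giving $L(LS(K_1,K_2)) \subseteq LS(K_1,K_2)$; repeating the same computation with $L^{-1}$ closes the argument.

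I do not expect any genuine obstacle here. The only point demanding care is the dualization step, where one must confirm that a cone automorphism induces an automorphism of the dual cone that respects the topological interior; this is exactly what Lemma \ref{(K_1,K_2)-nonnegative}, together with invertibility, provides. I note that an alternative route, matching the remark that the proof parallels Lemma 3.2 of \cite{dgjjt}, would be to conjugate $L$ onto the orthants via Lemma \ref{lemma-13}, invoke the orthant version of the statement, and transfer the conclusion back; but the direct computation above is the shorter path.
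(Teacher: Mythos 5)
Your proof is correct, and every ingredient you use is legitimately available where the theorem sits in the paper: Theorem \ref{onto-preservers-proper-cones} is established beforehand (no circularity), the equalities $\widetilde{X}(K_2)=K_2$ and $\widetilde{Y}(K_1)=K_1$ do force invertibility since proper cones span, the dualization to $\widetilde{X}^t(K_2^{\ast})=K_2^{\ast}$ and $\widetilde{Y}^t(K_1^{\ast})=K_1^{\ast}$ via Lemma \ref{(K_1,K_2)-nonnegative} is sound, and the substitution $L(A)^tz=\widetilde{Y}^t(A^tx)$ with $z=(\widetilde{X}^t)^{-1}x$ settles both inclusions. It is, however, a genuinely different route from the one the paper points to. The paper gives no computation at all; it says the proof ``follows similar to Lemma 3.2 of \cite{dgjjt}'', which is a structure-free duality argument: applying the Theorem of the Alternative (Theorem \ref{alternative}) to $A^t$ with the cones $K_2^{\ast}$ and $K_1^{\ast}$, and using $K^{\ast\ast}=K$, one sees that $A\in LS(K_1,K_2)$ exactly when there is no $0\neq x\in K_1$ with $-Ax\in K_2$, which (after checking that $\overline{S(K_1,K_2)}$ consists precisely of those $B$ with $Bx\in K_2$ for some $0\neq x\in K_1$) says $A\in LS(K_1,K_2)$ if and only if $-A\notin\overline{S(K_1,K_2)}$; an onto preserver of $S(K_1,K_2)$ is automatically invertible (the set spans $M_{m,n}$ by Theorem \ref{theorem-1.1}), hence is a homeomorphism commuting with negation, so it maps $\overline{S(K_1,K_2)}$ onto itself and therefore $LS(K_1,K_2)$ onto itself. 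The trade-off is worth noting: your argument is a short verification, but it rides on Theorem \ref{onto-preservers-proper-cones} and hence on the heaviest machinery in the paper (Theorem \ref{mxn-main-thm} and the whole rank-one analysis behind it); the dgjjt-style argument is elementary, needs no classification of preservers, and is exactly the kind of lemma that in \cite{dgjjt} must come \emph{before} the structure theorem, where it is an ingredient of the classification rather than a consequence of it---in this paper, by contrast, either order is legitimate. Your closing remark about conjugating to the orthants via Lemma \ref{lemma-13} is closer in spirit to the paper's intention, but the closure characterization above is the cleaner reading of its citation.
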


\begin{theorem}
Let $S_2 \in \pi (\reals^n_+,K_1)$, $Q_2 \in \pi (K_1, \reals^n_+)$, 
$S_1 \in \pi (\reals^m_+,K_2)$ and $Q_1 \in \pi (K_2, \reals^m_+)$ be invertible 
matrices. Let $T_1(A)= Q_1 A Q_2^{-1}$ and $T_2(A)= S_1 A S_2^{-1}$. 
If $L$: $M_{m,n} \rightarrow M_{m,n}$ is an onto preserver of $S(K_1, K_2)$, then 
$L_1 = T_1L T_2$ is an into preserver of $S(\reals^n_+,\reals^m_+)$ and $L_1^{-1}$ 
is an into preserver of $LS(\reals^n_+,\reals^m_+)$.
\end{theorem}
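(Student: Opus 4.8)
The plan is to handle the two assertions separately, each reducing to results already in hand. For the first assertion, I would note that an onto preserver of $S(K_1,K_2)$ is in particular an into preserver, so the hypothesis on $L$ in Theorem \ref{theorem-2} is satisfied. The one subtlety is that Theorem \ref{theorem-2} also requires the interior conditions $S_1((\reals^m_+)^{\circ}) \subseteq K_2^{\circ}$ and $Q_1(K_2^{\circ}) \subseteq (\reals^m_+)^{\circ}$, which are not listed among the present hypotheses. I would dispose of these by invoking invertibility: since $S_1 \in \pi(\reals^m_+,K_2)$ is an invertible linear map, hence a homeomorphism of $\reals^m$, the set $S_1((\reals^m_+)^{\circ})$ is open and contained in $S_1(\reals^m_+) \subseteq K_2$, so it lies in $K_2^{\circ}$; the condition on $Q_1$ follows in exactly the same way. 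With these in place, Theorem \ref{theorem-2} gives directly that $L_1 = T_1 L T_2$ is an into preserver of $S(\reals^n_+,\reals^m_+)$.

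For the second assertion, I would first record the explicit shape of $L_1^{-1}$. Because $T_1(A) = Q_1 A Q_2^{-1}$ and $T_2(A) = S_1 A S_2^{-1}$ are invertible, we have $L_1^{-1} = T_2^{-1} L^{-1} T_1^{-1}$, that is, $L_1^{-1}(A) = S_1^{-1} L^{-1}(Q_1^{-1} A Q_2) S_2$. The argument then chains three preservation facts. Given $A \in LS(\reals^n_+,\reals^m_+)$, Lemma \ref{lemma-13}(1) yields $Q_1^{-1} A Q_2 \in LS(K_1,K_2)$. Next, since $L$ is an onto preserver of $S(K_1,K_2)$, the theorem immediately preceding this one shows $L$ is also an onto preserver of $LS(K_1,K_2)$; moreover $L$ is invertible (see the next paragraph), so $L^{-1}(LS(K_1,K_2)) = LS(K_1,K_2)$ and hence $L^{-1}(Q_1^{-1} A Q_2) \in LS(K_1,K_2)$. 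Finally, Lemma \ref{lemma-13}(2) applied to this matrix gives $S_1^{-1} L^{-1}(Q_1^{-1} A Q_2) S_2 \in LS(\reals^n_+,\reals^m_+)$, which is precisely $L_1^{-1}(A)$.

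The whole proof is essentially a bookkeeping exercise in matching the cone data of $S_1,S_2,Q_1,Q_2$ against the hypotheses of Lemma \ref{lemma-13}, and the only substantive input is the already-established passage from onto preservation of $S(K_1,K_2)$ to onto preservation of $LS(K_1,K_2)$. The step most deserving of care is the deduction that $L^{-1}$ preserves $LS(K_1,K_2)$, which rests on $L$ being invertible: this follows because $S(K_1,K_2)$ contains a basis of $M_{m,n}$ (Theorem \ref{theorem-1.1}), so an onto preserver has full range $M_{m,n}$ and is therefore a bijection. Once invertibility is secured, the remaining steps are a direct composition of the cited lemmas, and I would not expect any genuine obstacle.
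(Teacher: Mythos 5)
Your proposal is correct and takes essentially the same route as the paper: the first assertion is exactly the paper's appeal to Theorem \ref{theorem-2} (your observation that invertibility of $S_1$ and $Q_1$ automatically yields the interior conditions $S_1((\reals^m_+)^{\circ}) \subseteq K_2^{\circ}$ and $Q_1(K_2^{\circ}) \subseteq (\reals^m_+)^{\circ}$ is right and fills a hypothesis-matching step the paper leaves implicit), and the second assertion is proved by the chain Lemma \ref{lemma-13}(1) $\rightarrow$ onto preservation of $LS(K_1,K_2)$ by the preceding theorem $\rightarrow$ Lemma \ref{lemma-13}(2), which is precisely the argument the paper compresses into ``it can be easily seen.'' Your additional care about invertibility of $L$ (via the basis in Theorem \ref{theorem-1.1}) is also sound and needed to pass from $L(LS(K_1,K_2)) = LS(K_1,K_2)$ to $L^{-1}(LS(K_1,K_2)) = LS(K_1,K_2)$.
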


\begin{proof}
By Theorem \ref{theorem-2}, $L_1$ is an into preserver of $S(\reals^n_+,\reals^m_+)$. 
It can be easily seen that $L_1^{-1}$ is an into preserver of 
$LS(\reals^n_+,\reals^m_+)$.  
\end{proof}

\vspace{1cm}
\noindent
\textbf{Acknowledgements:} The authors thank Professors C. R. Johnson, K. C. Sivakumar 
and M. J. Tsatsomeros for their suggestions, comments and encouragement that has 
improved the presentation of the manuscript. The second author acknowledges the 
Council of Scientific and Industrial Research (CSIR), India, for support in the 
form of Junior and Senior Research Fellowships (Award No. 09/997(0033)/2015-EMR-I).

\bibliographystyle{amsplain}

\section{Appendix}

\noindent
\textbf{Proofs of Theorems \ref{mxn-L(A_1)-rank-one-part-2} and \ref{mxn-main-thm} - 
the $3 \times 3$ case}

Let $L$ be an invertible linear map on $M_3$ and let $A = \textbf{xy}^t$ be a rank one 
matrix. Then, the matrix representation of $L(A)$ can be expressed as follows: Write 
$A = \textbf{xy}^t$, where $\textbf{x} = (x_1,x_2, x_3)^t$ and $\textbf{y} = (y_1,y_2, y_3)^t$. 
We then have \\
$L(A) =
\begin{bmatrix}
l_{11} x_1 + l_{14}x_2 +l_{17} x_3 & l_{21} x_1 + l_{24}x_2 +l_{27} x_3 & l_{31} x_1 + l_{34}x_2 +l_{37} x_3\\
l_{41} x_1 + l_{44}x_2 +l_{47} x_3 & l_{51} x_1 + l_{54}x_2 +l_{57} x_3 & l_{61} x_1 + l_{64}x_2 +l_{67} x_3\\
l_{71} x_1 + l_{74}x_2 +l_{77} x_3 & l_{81} x_1 + l_{84}x_2 +l_{87} x_3 & l_{91} x_1 + l_{94}x_2 +l_{97} x_3\\
\end{bmatrix} y_1  
+$ \\
$ \begin{bmatrix}
l_{12} x_1 + l_{15}x_2 +l_{18} x_3 & l_{22} x_1 + l_{25}x_2 +l_{28} x_3 & l_{32} x_1 + l_{35}x_2 +l_{38} x_3\\
l_{42} x_1 + l_{45}x_2 +l_{48} x_3 & l_{52} x_1 + l_{55}x_2 +l_{58} x_3 & l_{62} x_1 + l_{65}x_2 +l_{68} x_3\\
l_{72} x_1 + l_{75}x_2 +l_{78} x_3 & l_{82} x_1 + l_{85}x_2 +l_{88} x_3 & l_{92} x_1 + l_{95}x_2 +l_{98} x_3\\
\end{bmatrix} y_2
+$ \\
$ \begin{bmatrix}
l_{13} x_1 + l_{16}x_2 +l_{19} x_3 & l_{23} x_1 + l_{26}x_2 +l_{29} x_3 & l_{33} x_1 + l_{36}x_2 +l_{39} x_3\\
l_{43} x_1 + l_{46}x_2 +l_{49} x_3 & l_{53} x_1 + l_{56}x_2 +l_{59} x_3 & l_{63} x_1 + l_{66}x_2 +l_{69} x_3\\
l_{73} x_1 + l_{76}x_2 +l_{79} x_3 & l_{83} x_1 + l_{86}x_2 +l_{89} x_3 & l_{93} x_1 + l_{96}x_2 +l_{99} x_3\\
\end{bmatrix} y_3, $ \\ 
where $l_{ij}, \ i = 1, \ldots, 9, \ j =1, \ldots,9$ are fixed real numbers.

\medskip
\noindent
Our aim is to prove that when $A_1$ is semipositive, $L(A_1)$ is mapped to a 
rank one (semipositive) matrix. The proof involves several steps. Here $A_1$ 
represents a rank one matrix of the form $A_1 = \textbf{xy}^t_{1}$, where 
$\textbf{x} = (x_1,x_2, x_3)^t$ and $\textbf{y}_{1} = (y_1, 0, 0)^t$. We only 
indicate the main steps and include the proofs only 
when necessary. We shall have an occasion to use the following Theorem of the 
Alternative.

\begin{theorem}(Theorem 2.8, \cite{chs})\label{alternative-orthant}
For an $m \times n$ matrix $A$, one and only one of the following
alternatives holds.\\
(a) There exists $x \geq 0$ such that $Ax > 0$. \\
(b) There exists $0 \neq y \geq 0$ such that $- A^t y \geq 0$.
\end{theorem}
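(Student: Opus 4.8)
The plan is to deduce this statement as the special case of Theorem \ref{alternative} obtained by taking $K_1 = \reals^n_+$ and $K_2 = \reals^m_+$; once the dictionary between the two formulations is set up, no separate argument is needed. First I would record the two standard facts about the nonnegative orthant that drive the translation: the orthant is self-dual, so $(\reals^n_+)^* = \reals^n_+$ and $(\reals^m_+)^* = \reals^m_+$, and its interior $(\reals^k_+)^\circ$ is exactly the set of entrywise strictly positive vectors. Substituting $K_1 = \reals^n_+$ and $K_2 = \reals^m_+$ into Theorem \ref{alternative}, alternative (a) there, the existence of $x \in K_1$ with $Ax \in K_2^\circ$, reads precisely as the existence of $x \geq 0$ with $Ax > 0$, which is alternative (a) here. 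Likewise alternative (b) there, the existence of $0 \neq y \in K_2^*$ with $-A^t y \in K_1^*$, becomes the existence of $0 \neq y \geq 0$ with $-A^t y \geq 0$, which is alternative (b) here. Since Theorem \ref{alternative} asserts that exactly one of its two alternatives holds, the same dichotomy transfers verbatim.

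If instead one wished to argue from scratch, I would proceed by convex separation. The exclusivity of the two alternatives is immediate: if both held, then for witnesses $x \geq 0$ and $0 \neq y \geq 0$ one would have $\langle y, Ax\rangle = \langle A^t y, x\rangle \leq 0$, because $A^t y \leq 0$ and $x \geq 0$, yet $Ax > 0$ together with $y \geq 0$, $y \neq 0$ forces $\langle y, Ax\rangle > 0$, a contradiction. For the fact that at least one holds, I would consider the convex cone $A(\reals^n_+) = \{Ax : x \geq 0\}$ and suppose (a) fails, so this cone misses the open orthant $(\reals^m_+)^\circ$. A separating hyperplane then supplies a nonzero functional $y$ that is nonnegative on $(\reals^m_+)^\circ$, forcing $y \in (\reals^m_+)^* = \reals^m_+$, and nonpositive on the cone $A(\reals^n_+)$, forcing $\langle A^t y, x\rangle \leq 0$ for all $x \geq 0$ and hence $-A^t y \geq 0$; this is exactly (b).

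The only genuine subtlety in the from-scratch route is the usual one for theorems of the alternative: the image cone $A(\reals^n_+)$ need not be closed, so one must separate an open convex set from a possibly non-closed convex cone rather than two closed sets, and verify that the separating functional can be taken nonzero with the correct signs. Since Theorem \ref{alternative} already packages precisely this separation work for arbitrary proper cones, the hard part is entirely absorbed by that citation, and I would simply invoke it with $K_1 = \reals^n_+$ and $K_2 = \reals^m_+$ rather than reproduce the separation argument here.
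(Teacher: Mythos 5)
Your proposal is correct and matches the paper's treatment: the paper offers no independent proof of Theorem \ref{alternative-orthant}, presenting it (like Theorem \ref{alternative}) as Theorem 2.8 of \cite{chs}, and the orthant version is indeed just the specialization $K_1 = \reals^n_+$, $K_2 = \reals^m_+$ of Theorem \ref{alternative} via self-duality of the orthants and the identification of $(\reals^k_+)^\circ$ with the strictly positive vectors, exactly as you argue. (As a side remark, in your optional from-scratch route the closedness worry is vacuous here, since $A(\reals^n_+)$ is finitely generated and hence closed; but your separation argument works regardless because one of the two sets being separated is open.)
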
 

The first step is the following. 

\begin{lemma}\label{C_1-rank-one}
Suppose $A_1$ is semipositive. The matrix 
$C_1 = \begin{bmatrix}
l_{11} & l_{14} & l_{17}\\
l_{21} & l_{24} & l_{27}\\
l_{31} & l_{34} & l_{37}
\end{bmatrix}$ has rank one.
\end{lemma}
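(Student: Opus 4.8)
The plan is to show that the rank of $C_1$ is neither $3$ nor $2$ nor $0$, so that it must be $1$. First I would record the consequences of the hypothesis: since $A_1 = \mathbf{x}\mathbf{y}_1^t$ is semipositive we have $x_i > 0$ for each $i$ and $y_1 > 0$, and since $L$ preserves semipositivity, $L(A_1) \in S(\reals^3_+)$. Reading off the matrix representation displayed above, the first row of $L(A_1)$ equals $y_1(C_1\mathbf{x})^t$. I would similarly abbreviate the $y_2$- and $y_3$-blocks by $C_2$ and $C_3$, so that for an arbitrary column $w \in \reals^3$ and each $k=1,2,3$ the first row of $L(we_k^t)$ is exactly $(C_k w)^t$, where $e_k$ is the $k$-th standard basis vector. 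The single mechanism driving every case is the elementary fact (a consequence of the Theorem of the Alternative, Theorem \ref{alternative-orthant}) that a semipositive matrix cannot possess a strictly negative row.

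To rule out $\mathrm{rank}(C_1)=3$, I would suppose $C_1$ is invertible, fix any $z<0$, solve $C_1 q = z$, and consider $B = qe_1^t + y_2\mathbf{x}e_2^t$ with $y_2>0$. Its second column $y_2\mathbf{x}$ is strictly positive, so $B$ is semipositive; by linearity the first row of $L(B)$ is $z^t + y_2 r^t$, where $r^t$ is the fixed first row of $L(\mathbf{x}e_2^t)$, and choosing $y_2$ small keeps this row strictly negative, forcing $L(B)\notin S(\reals^3_+)$, a contradiction. To rule out $\mathrm{rank}(C_1)=2$, I would first observe that $C = [\,C_1 \mid C_2 \mid C_3\,]$ has rank $3$ because $L$ is invertible (these are, up to a column permutation, the three rows of the $9\times 9$ matrix of $L$ indexing the output positions $(1,1),(1,2),(1,3)$). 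If $\mathrm{rank}(C_1)=2$, then after relabelling the $\mathbf{y}$-slots so that $[\,C_1 \mid C_2\,]$ already has rank $3$, I would solve $[\,C_1 \mid C_2\,]\binom{p^{(1)}}{p^{(2)}} = z$ for $z<0$ and take $B = p^{(1)}e_1^t + p^{(2)}e_2^t + y_3\mathbf{x}e_3^t$; its third column is strictly positive, so $B$ is semipositive, while its first row under $L$ is $z^t + y_3(r')^t$, again strictly negative for small $y_3>0$ — the same contradiction. Hence $\mathrm{rank}(C_1)\le 1$.

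Finally, $\mathrm{rank}(C_1)\neq 0$, since $C_1=0$ would make the first row $y_1(C_1\mathbf{x})^t$ of $L(A_1)$ vanish, impossible for the semipositive matrix $L(A_1)$. Combining the exclusions yields $\mathrm{rank}(C_1)=1$; this is precisely the $n=3$ instance of Theorem \ref{mxn-L(A_1)-rank-one-part-1}, and the same scheme continues the induction down the ranks. The routine part is the linear-algebra bookkeeping; the points demanding care are matching each coefficient block $C_k$ to the correct first row $(C_kw)^t$ of $L(we_k^t)$, and, in the rank-two step, both justifying the reduction to a full-rank $[\,C_1 \mid C_2\,]$ and calibrating the perturbation parameter small enough to preserve strict negativity of the first row while keeping the designated column strictly positive so that the test matrix remains semipositive. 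I expect this last tension — simultaneously keeping $B$ semipositive and forcing a strictly negative row in $L(B)$ — to be the main obstacle.
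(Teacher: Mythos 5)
Your proposal is correct and follows essentially the same route as the paper: the paper proves this lemma by specializing Theorem \ref{mxn-L(A_1)-rank-one-part-1}, whose argument is exactly your scheme of solving $C_1q=z$ (or $[C_1|C_2]p=z$, after noting $\mathrm{rank}[C_1|C_2|C_3]=3$ from invertibility of $L$) for $z<0$, padding with a small positive column $y\mathbf{x}$ to keep the test matrix semipositive, and forcing a strictly negative first row in its image. Your explicit exclusion of $\mathrm{rank}(C_1)=0$ via the zero-row obstruction is a detail the paper leaves implicit, but it matches the paper's framework.
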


The proof is very similar to the one given in Theorem \ref{mxn-L(A_1)-rank-one-part-1}. 
One can also prove that the matrices $\begin{bmatrix*}[c]
                                  l_{41} & l_{44} & l_{47}\\
                                  l_{51} & l_{54} & l_{57}\\
                                  l_{61} & l_{64} & l_{67}
                                 \end{bmatrix*} \ \mbox{and} \ 
\begin{bmatrix*}[c]
l_{71} & l_{74} & l_{77}\\
l_{81} & l_{84} & l_{87}\\
l_{91} & l_{94} & l_{97} 
\end{bmatrix*}$ have rank one. 

The second step is in proving that $L(A_1)$ contains a positive column when $A_1$ is semipositive. This is an important step in the proof. Let us denote by $P_1$, $P_2$ 
and $P_3$ the following numbers: 
$P_1:= l_{11}x_1 + l_{14}x_2 +l_{17}x_3, \ P_2:= l_{41}x_1 + l_{44}x_2 +l_{47}x_3 \ \mbox{and} \  
P_3: = l_{71}x_1 + l_{74}x_2 +l_{77}x_3$.

\begin{lemma}\label{positive-column-existence}
If $L$ is an invertible linear map on $M_3$ that preserves $S(\reals^3_+)$, then 
$L(A_1)$ contains a positive column, when $A_1$ is semipositive.
\end{lemma}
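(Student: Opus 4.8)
The plan is to argue by contradiction: assume that no column of $L(A_1)$ is positive, and then manufacture a semipositive matrix $B$ whose image $L(B)$ fails to be semipositive, contradicting the hypothesis on $L$. First I would record the structure supplied by Lemma \ref{C_1-rank-one} together with its two companion statements: the three blocks governing the rows of $L(A_1)$ all have rank one, so that the $i$-th row of $L(A_1)$ equals $c_i\,\mathbf{u}_i^t$ for a fixed direction $\mathbf{u}_i\in\reals^3$ and a scalar $c_i=\mathbf{w}_i^t\mathbf{x}$ depending linearly on $\mathbf{x}$, where $\mathbf{w}_i\neq 0$ (since the block has rank exactly one). Because $A_1$ is semipositive, $L(A_1)$ is semipositive, and by Theorem \ref{non-MSP} it is in fact redundantly semipositive; in particular every $c_i\neq 0$ and each $\mathbf{u}_i\neq 0$, and there is a witness $v>0$ with $c_i(\mathbf{u}_i^t v)>0$ for all $i$.

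The engine of the proof is the following perturbation. For any $\mathbf{q}\in\reals^3$ and $y_2>0$, the matrix $B=\mathbf{q}\,\mathbf{e}_1^t+y_2\,\mathbf{x}\,\mathbf{e}_2^t$ has the positive column $y_2\mathbf{x}$ and is therefore semipositive regardless of $\mathbf{q}$. Its image is $L(B)=L(\mathbf{q}\,\mathbf{e}_1^t)+y_2\,L(\mathbf{x}\,\mathbf{e}_2^t)$, and since the rank-one structure of Lemma \ref{C_1-rank-one} holds for every input vector, the $i$-th row of $L(\mathbf{q}\,\mathbf{e}_1^t)$ is $(\mathbf{w}_i^t\mathbf{q})\,\mathbf{u}_i^t$. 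As in Theorem \ref{mxn-L(A_1)-rank-one-part-1}, invertibility of $L$ lets me prescribe the signs of the nonzero functionals $\mathbf{q}\mapsto\mathbf{w}_i^t\mathbf{q}$. Choosing the sign of $\mathbf{w}_i^t\mathbf{q}$ opposite to the common sign of a strictly signed direction $\mathbf{u}_i$, and then taking $y_2$ small, drives the $i$-th row of $L(B)$ entrywise negative; hence $L(B)$ has a nonpositive row and is not semipositive. This already forces each $\mathbf{u}_i$ to contain a zero entry, and repeating the idea with perturbations along $\mathbf{e}_3$ (and the companion matrices $L(\mathbf{x}\,\mathbf{e}_2^t)$, $L(\mathbf{x}\,\mathbf{e}_3^t)$, which enjoy the analogous rank-one structure) lets me locate the coordinates where each $\mathbf{u}_i$ vanishes.

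With this information I would then invoke the no-positive-column assumption, which pins down, for every coordinate $j$, an index $i$ with $c_i(\mathbf{u}_i)_j\le 0$. Combining these inequalities with the semipositivity witness $v$ and the zero-location data above, in each configuration I can select $\mathbf{q}$ and a sufficiently small $y_2$ so that some row of the corresponding $L(B)$ is nonpositive, contradicting preservation of semipositivity; this forces a positive column of $L(A_1)$.

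The main obstacle is precisely this final case analysis. The local data are genuinely insufficient: a redundantly semipositive $3\times 3$ matrix can have no positive column at all (for instance a symmetric matrix whose rows each lie strictly on the positive side of $(1,1,1)^t$ but which has a negative entry in every column). Thus the argument cannot stay local and must invoke global preservation through the perturbations $B$, and the bookkeeping over the admissible sign patterns and zero positions of $\mathbf{u}_1,\mathbf{u}_2,\mathbf{u}_3$ across the several coefficient blocks is exactly what makes the proof long.
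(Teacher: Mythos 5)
Your ``engine'' is exactly the device the paper uses: by Lemma~\ref{C_1-rank-one} and its companions, the $i$-th row of $L(\mathbf{q}\,\mathbf{e}_1^t)$ is $(\mathbf{w}_i^t\mathbf{q})\,\mathbf{u}_i^t$, invertibility of $L$ lets you prescribe the signs of the three functionals $\mathbf{w}_i^t\mathbf{q}$, and $B=\mathbf{q}\,\mathbf{e}_1^t+y_2\,\mathbf{x}\,\mathbf{e}_2^t$ is semipositive for every $\mathbf{q}$, so $L(B)$ must remain semipositive. That much is sound. The first genuine error is the conclusion you draw from it: driving a row entrywise negative is possible only when $\mathbf{u}_i$ has all entries nonzero \emph{and of one common sign}, so the argument excludes exactly that configuration; it does not force any $\mathbf{u}_i$ to contain a zero entry. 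A mixed-sign $\mathbf{u}_i$ with no zero entry survives your perturbation, and this is not a corner case but the generic one for honest preservers: take $L(A)=AY$ with $Y=M^{-1}$, $M=\left(\begin{smallmatrix}2&1&1\\1&2&1\\1&1&2\end{smallmatrix}\right)\geq 0$, so that $Y=\tfrac14\left(\begin{smallmatrix}3&-1&-1\\-1&3&-1\\-1&-1&3\end{smallmatrix}\right)$. By Theorem 2.4 of \cite{dgjjt} this invertible $L$ preserves $S(\reals^3_+)$, and here every $\mathbf{u}_i$ is proportional to $(3,-1,-1)^t$: mixed signs, no zero entry (and $L(A_1)$ does have a positive column, as the lemma asserts). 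So the ``zero-location data'' on which the remainder of your plan is built does not exist, and the steps that depend on locating where each $\mathbf{u}_i$ vanishes are vacuous.

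The second gap is that the part of the proposal that would actually prove the lemma is asserted rather than argued: ``in each configuration I can select $\mathbf{q}$ and a sufficiently small $y_2$ so that some row of the corresponding $L(B)$ is nonpositive.'' That case analysis \emph{is} the proof of Lemma~\ref{positive-column-existence}, and it is where all the difficulty lives. The paper organizes it by using Theorem~\ref{non-MSP} (redundant semipositivity) to normalize the signs of the first two columns of $L(A_1)$ and then running through the sign patterns of the third-column coefficients $f_1,f_2,f_3$; crucially, in the degenerate cases (its Case 4, Case 6 and Subcase 8(c)) a single perturbation of the form $B=\mathbf{q}\,\mathbf{e}_1^t+y_2\,\mathbf{x}\,\mathbf{e}_2^t$ provably cannot yield a contradiction, and one must pass to a two-parameter perturbation $B_1=\mathbf{q}\,\mathbf{e}_1^t-y_2\,\mathbf{x}\,\mathbf{e}_2^t+y_3\,\mathbf{x}\,\mathbf{e}_3^t$; moreover the failure of semipositivity of $L(B)$ is not always witnessed by a nonpositive row. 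You correctly diagnose at the end that local sign data on a redundantly semipositive matrix cannot settle the question and that the global preserver property must be invoked through such perturbations; but your outline stops precisely at that point, so the proposal identifies the obstacle without overcoming it.
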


\begin{proof}
Since $L(A_1)$ is redundantly semipositive, assume without loss of generality that the 
submatrix formed from the first two columns of $L(A_1)$ forms a semipositive matrix. 
Suppose 
$L(A_1) = \begin{bmatrix*}[r]
P_1 a & -P_1 b & P_1 f_1\\
-P_2 c & P_2 d & P_2 f_2\\
P_3 e & -P_3 f & P_3 f_3 
\end{bmatrix*}$, where $a > 0, d > 0, e > 0, b \geq 0, c \geq 0, f \geq 0$ and 
$f_1, f_2, f_3 \in \reals$. 
	
\medskip
\noindent
\underline{Suppose $f_3 > 0$:} We discuss various possibilities in this case.

\medskip	
\noindent
\underline{Case 1:} If $f_1 > 0, f_2 > 0$, then $L(A_1)$ contains a positive column.

\medskip	
\noindent
\underline{Case 2:} If $f_1 > 0$ and $f_2 \leq 0$, then $L(A_1)$ has the form 
$\begin{bmatrix*}[r]
P_1 a & -P_1 b & P_1 f_1\\
-P_2 c & P_2 d & -P_2 f_2\\
P_3 e & -P_3 f & P_3 f_3 
\end{bmatrix*}$, where $a > 0, d > 0, e > 0, b \geq 0, c \geq 0, f \geq 0$, 
$f_1 > 0, f_3 > 0$ and $f_2 \geq 0$. Choose a vector $q$ such that $Vq = (-1, -1, 1)^t$, 
where $V = \begin{bmatrix*}[c]
		   l_{11} & l_{14} & l_{17}\\
		   l_{41} & l_{44} & l_{47}\\                                                                                   
           l_{71} & l_{74} & l_{77}
          \end{bmatrix*}$. 
Now consider the matrix $B = \begin{bmatrix}
q_1 & 0 & 0\\
q_2 & 0 & 0\\
q_3 & 0 & 0
\end{bmatrix} \ + \ y_2 \begin{bmatrix}
0 & x_1 & 0\\
0 & x_2 & 0\\
0 & x_3 & 0
\end{bmatrix}$, where $y_2 > 0$. Then,  
$L(B) = \begin{bmatrix*}[r]
-a & b & -f_1\\
c & -d & f_2\\
e & -f & f_3
\end{bmatrix*} \ + \ y_2 L \Bigg (\begin{bmatrix}
0 & x_1 & 0\\
0 & x_2 & 0\\
0 & x_3 & 0
\end{bmatrix} \Bigg)$. If $y_2 > 0$ that is sufficiently small exists such that 
$L(B)$ is not semipositive, we get a contradiction to our assumption. Else, choose 
a vector $q$ such that $Vq = (1, -1, -1)^t$ and proceed as above. Note that in at 
least one of these cases, it is possible to choose $y_2 > 0$ sufficiently small so that $L(B)$ is not semipositive.

\medskip	
\noindent
\underline{Case 3:} Suppose $f_1 \leq 0$ and $f_2 > 0$. Write $L(A_1)$ as 
$\begin{bmatrix*}[r]
P_1 a & -P_2 b & -P_1 f_1\\
-P_2 c & P_2 d & P_2 f_2\\
P_3 e & -P_3 f & P_3 f_3
\end{bmatrix*}$, where $a > 0, d > 0, e > 0, b \geq 0, c \geq 0, f \geq 0$, 
$f_1 \geq 0, f_2 > 0, f_3 > 0$. Choose a vector $q$ such that $Vq = (1, -1, -1)^t$, 
form the semipositive matrix $B$ as in the previous case so that for small enough 
$y_2$ the matrix $L(B)$ is not semipositive.

\medskip	
\noindent
\underline{Case 4:} Suppose $f_1 \leq 0$ and $f_2 \leq 0$. We have 
$L(A_1) = \begin{bmatrix*}[r]
P_1 a & -P_1 b & -P_1 f_1\\
-P_2 c & P_2 d & -P_2 f_2\\
P_3 e & -P_3 f & P_3 f_3
\end{bmatrix*}$, where $a > 0, d > 0, e > 0, b \geq 0, c \geq 0, f \geq 0$, 
$f_1 \geq 0, f_2 \geq 0, f_3 > 0$. If either $b \neq 0$ or $f_1 \neq 0$, choose a 
vector $q$ such that $Vq = (1, -1, -1)^t$ and form the semipositive matrix $B$ as 
in the previous case. Then, $L(B) = \begin{bmatrix*}[r]
a & -b & -f_1\\
c & -d & f_2\\
-e & f & -f_3
\end{bmatrix*} \ + \ y_2 L \Bigg (\begin{bmatrix}
0 & x_1 & 0\\
0 & x_2 & 0\\
0 & x_3 & 0
\end{bmatrix} \Bigg)$. 
Choose a $y_2$ small enough so that $B$ is semipositive, whereas $L(B)$ is not 
semipositive. Suppose both $b$ and $f_1$ are zero. If there is no $y_2 > 0$ such that 
$B$ (as in the above case) is semipositive, whereas $L(B)$ is not, then form the matrix 
$B_1 = \begin{bmatrix}
q_1 & 0 & 0\\
q_2 & 0 & 0\\
q_3 & 0 & 0
\end{bmatrix} \ + \ (-y_2) \begin{bmatrix}
0 & x_1 & 0\\
0 & x_2 & 0\\
0 & x_3 & 0 
\end{bmatrix} \ + \ y_3 \begin{bmatrix}
0 & 0 & x_1\\
0 & 0 & x_2\\
0 & 0 & x_3
\end{bmatrix}$, where 
$y_2 > 0$ is small enough and $y_3 > 0$. Notice that $B_1$ is a semipositive matrix. 
Now choose $y_3>0$ sufficiently small to make $L(B_1)$ not semipositive.
	
\medskip
\noindent
\underline{Suppose $f_3 \leq 0$:} We discuss various possibilities in this case.

\medskip
\noindent
\underline{Case 5:} If $f_1 > 0$ and $f_2 > 0$, then write $L(A_1)$ as 
$\begin{bmatrix*}[r]
P_1 a & -P_1 b & P_1 f_1\\
-P_2 c & P_2 d & P_2 f_2\\
P_3 e & -P_3 f & -P_3 f_3
\end{bmatrix*}$, where $f_1 > 0, f_2 > 0$ and $f_3 \leq 0$. Choose a vector $q$ 
such that $Vq = (-1, -1, 1)^t$ and form the semipositive matrix $B$ as done in 
the previous cases. Then, $L(B) = \begin{bmatrix*}[r]
-a & b & -f_1\\
c & -d & -f_2\\
e & -f & -f_3
\end{bmatrix*} \ + \ y_2 L \Bigg (\begin{bmatrix}
0 & x_1 & 0\\
0 & x_2 & 0\\
0 & x_3 & 0
\end{bmatrix} \Bigg)$. 
Now choose a $y_2$ small enough so that $L(B)$ is not semipositive.

\medskip	
\noindent
\underline{Case 6:} If $f_1 > 0$ and $f_2 \leq 0$, then write $L(A_1)$ as 
$\begin{bmatrix*}[r]
P_1 a & -P_1 b & P_1 f_1\\
-P_2 c & P_2 d & -P_2 f_2\\
P_3 e & -P_3 f & -P_3 f_3
\end{bmatrix*}$, where $f_1 > 0, f_2 \geq 0$ and $f_3 \geq 0$. If $f \neq 0$ or 
$f_3 \neq 0$, then choose a vector $q$ such that $Vq = (-1, -1, 1)^t$. If $B$ denotes 
the semipositive matrix as considered in the previous cases, then $L(B) = 
\begin{bmatrix*}[r]
-a & b & -f_1\\
c & -d & f_2\\
e & -f & -f_3
\end{bmatrix*} \ + \ 
y_2 L \Bigg (\begin{bmatrix}
0 & x_1 & 0\\
0 & x_2 & 0\\
0 & x_3 & 0
\end{bmatrix} \Bigg)$. It is now possible to choose a $y_2 > 0$ that is sufficiently 
small so that $L(B)$ is not semipositive. If $f$ and $f_3$ are both zero and 
if there is no $y_2 > 0$ small enough such that $L(B)$ is not semipositive, then 
consider the matrix $B_1$ from Case $4$ above. 
Then $B_1$ is semipositive. Choose a $y_3>0$ such that $L(B_1)$ is not semipositive.

\medskip
\noindent
\underline{Case 7:} If $f_1 \leq 0$ and $f_2 > 0$, then write $L(A_1)$ as 
$\begin{bmatrix*}[r]
P_1 a & -P_1 b & -P_1 f_1\\
-P_2 c & P_2 d & P_2 f_2\\
P_3 e & -P_3 f & -P_3 f_3
\end{bmatrix*}$, where $f_1 \geq 0, f_2 > 0$ and $f_3 \geq 0$. Choose a vector $q$ 
such that $Vq = (-1, -1, 1)^t$ or $(1, -1, -1)^t$ and form the semipositive matrix 
$B$ as before. Observe that it is possible to choose a $y_2 > 0$ small enough so that 
$L(B)$ is not semipositive in at least one of these cases.

\medskip	
\noindent
\underline{Case 8:} Suppose $f_1 \leq 0$ and $f_2 \leq 0$.

\medskip	
\noindent
\underline{Subcase 8(a):} If $f_3 < 0$, the proceed as in Case 7.

\medskip	
\noindent
\underline{Subcase 8(b):} Suppose $f_3 = 0, f_1 < 0$ and $f_2 \leq 0$. In this case, 
we have 
$L(A_1) = \begin{bmatrix*}[r]
P_1 a & -P_1 b & -P_1 f_1\\
-P_2 c & P_2 d & -P_2 f_2\\
P_3 e & -P_3 f & 0
\end{bmatrix*}$, where $f_1 > 0$ and $f_2 \geq 0$. Choose a vector $q$ 
such that $Vq = (1, -1, -1)^t$, form the semipositive matrix $B$ and choose a 
$y_2 > 0$ sufficiently small such that $L(B)$ is not semipositive.

\medskip	
\noindent
\underline{Subcase 8(c):} Suppose $f_3 = 0, f_1 = 0$ and $f_2 < 0$. In this case, 
we have $L(A_1) = \begin{bmatrix*}[r]
P_1 a & -P_1 b & 0\\
-P_2 c & P_2 d & -P_2 f_2\\
P_3 e & -P_3 f & 0
\end{bmatrix*}$, where $f_2 > 0$. Choose a vector $q$ such that 
$Vq = (-1, -1, -1)^t$ and form the semipositive matrix $B$ as before. If there 
exists a $y_2 > 0$ small enough such that $L(B)$ is not semipositive, we are done. 
Else, choose $y_2 > 0, y_3 > 0$ and form the matrix 
$B_1 = \begin{bmatrix}
q_1 & 0 & 0\\
q_2 & 0 & 0\\
q_3 & 0 & 0
\end{bmatrix} \ + \ (-y_2) \begin{bmatrix}
0 & x_1 & 0\\
0 & x_2 & 0\\
0 & x_3 & 0
\end{bmatrix} \ + \ y_3 \begin{bmatrix}
0 & 0 & x_1\\
0 & 0 & x_2\\
0 & 0 & x_3
\end{bmatrix}$, which is  
semipositive. Choose an appropriate $y_3$ so that $L(B_1)$ is not semipositive.
	
\medskip
\noindent
Combining all these cases, we conclude that $L(A_1)$ contains a positive column. 
\end{proof}

\medskip
\noindent
A similar argument will ensure that $L(A_2)$ and $L(A_3)$ also have positive 
columns when $L$ preserves semipositive matrices and the matrices $A_2$ and $A_3$ are 
semipositive. 

We thus have 
$L(A_1) = \begin{bmatrix*}[c]
		   P_1 & \alpha_1 P_1 & \beta_1 P_1\\
		   P_2 & \alpha_2 P_2 & \beta_2 P_2\\
           P_3 & \alpha_3 P_3 & \beta_3 P_3
         \end{bmatrix*}$ for some $\alpha_i$ and $\beta_i \in \reals$ for $i = 1, 2, 3$.

Before proceeding further, let us denote by $E$ the matrix 
$\begin{bmatrix*}[c]
  \alpha_1  & \beta_1 \\
  \alpha_2  & \beta_2 \\
  \alpha_3  & \beta_3 
 \end{bmatrix*}$. 
 
The next step is the following result. 

\begin{lemma}\label{positive-column-1}
Let $L$ be an invertible linear map on $M_3$ that preserves semipositive matrices. Assume 
that $A_1$ is semipositive, that the first column of $L(A_1)$ is positive and the 
matrix $E$ is semipositive. Then, $L(A_1)$ has rank one.
\end{lemma}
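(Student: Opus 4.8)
The plan is to read the rank of $L(A_1)$ off the reduced form recorded just before the lemma. Since the first column of $L(A_1)$ is positive we have $P_1,P_2,P_3>0$, and $L(A_1)=\mathrm{diag}(P_1,P_2,P_3)\,M$ with $M=\left[\begin{smallmatrix}1&\alpha_1&\beta_1\\ 1&\alpha_2&\beta_2\\ 1&\alpha_3&\beta_3\end{smallmatrix}\right]$; hence $\mathrm{rank}\,L(A_1)=\mathrm{rank}\,M$, and because each row of $M$ has leading entry $1$, rank one is exactly $\alpha_1=\alpha_2=\alpha_3$ together with $\beta_1=\beta_2=\beta_3$. So I would aim to exclude $\mathrm{rank}\,M\ge 2$. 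The device for detecting failure of semipositivity is the Theorem of the Alternative (Theorem \ref{alternative-orthant}): a matrix $C$ is not semipositive once there is a vector $0\neq y\ge 0$ with $y^tC\le 0$. Since $L$ preserves semipositivity, producing a semipositive $B$ whose image $L(B)$ carries such a certificate gives the sought contradiction.

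The test matrices are built on the first column. For $q\in\reals^3$ the image $L(q\mathbf{y}_1^t)$ has $i$-th row equal to $(Vq)_i\,(1,\alpha_i,\beta_i)$, where $V=\left[\begin{smallmatrix}l_{11}&l_{14}&l_{17}\\ l_{41}&l_{44}&l_{47}\\ l_{71}&l_{74}&l_{77}\end{smallmatrix}\right]$ is the matrix appearing in Lemma \ref{positive-column-existence}; $V$ is invertible, since $Vq=0$ would force $L(q\mathbf{y}_1^t)=0$ and hence $q=0$ by injectivity of $L$. Thus, letting $y\ge 0$ and $q$ vary and using the sign freedom in $Vq$, the quantity $y^tL(q\mathbf{y}_1^t)=\sum_i y_i(Vq)_i\,(1,\alpha_i,\beta_i)$ sweeps out the entire row space of $M$. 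To keep the test matrix semipositive I would add small positive multiples of $\mathbf{x}$ in columns $2$ and $3$, i.e. take $B=q\mathbf{y}_1^t+A_2'+A_3'$ where $A_j'$ has its $j$-th column a small positive multiple of $\mathbf{x}$; such a $B$ has a positive column, so it is semipositive, and $L(B)$ is a small, controllable perturbation of $L(q\mathbf{y}_1^t)$. When $\mathrm{rank}\,M=3$ the row space is all of $\reals^3$, so I can pick $q,y$ with $y^tL(q\mathbf{y}_1^t)=(-1,-1,-1)$; being strictly negative, this certificate survives a small perturbation, yielding $y^tL(B)<0$ and a contradiction. Note that this case uses nothing about $E$.

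The hard part is $\mathrm{rank}\,M=2$, and this is precisely where semipositivity of $E$ enters. Now the achievable certificates $y^tL(q\mathbf{y}_1^t)$ fill only a $2$-plane, which in general contains no strictly negative vector, so the crude argument fails and one must genuinely use the perturbations in columns $2$ and $3$ — whose images $L(A_2),L(A_3)$ are themselves semipositive, each with a positive column by (the analogue of) Lemma \ref{positive-column-existence} — to push a certificate that is merely $\le 0$ off the plane into the nonpositive orthant. I would run a sign-case analysis on the pairs $(\alpha_i,\beta_i)$ parallel to the proof of Lemma \ref{positive-column-existence}: the hypothesis that $E$ is semipositive restricts which sign patterns occur and should guarantee that, in each surviving case, a suitable $q$ (equivalently a sign vector $Vq$) together with sufficiently small positive perturbations yields a semipositive $B$ for which $L(B)$ has either a strictly negative row or a nonnegative combination of rows that is $\le 0$. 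The main obstacle is exactly the robustness bookkeeping: one has to make sure the chosen certificate remains $\le 0$ after the small, not-fully-determined perturbation terms are added, and this is what makes the computation lengthy.
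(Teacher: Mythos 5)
Your setup coincides with the paper's own machinery: the reduction of the rank of $L(A_1)$ to the rank of $M$ (valid, since $P_1,P_2,P_3>0$), the invertibility of $V$, the observation that row $i$ of $L(q\mathbf{y}_1^t)$ equals $(Vq)_i(1,\alpha_i,\beta_i)$ for every $q$ (this rests on Lemma \ref{C_1-rank-one} and its analogues for the other rows, so it is legitimately available here), the test matrices $B=q\mathbf{y}_1^t+(\text{small positive columns})$, and the use of Theorem \ref{alternative-orthant} to certify non-semipositivity. Your disposal of the case $\operatorname{rank} M=3$ is correct: a strictly negative certificate $y^tL(q\mathbf{y}_1^t)=(-1,-1,-1)$ survives sufficiently small perturbations, so $L(B)$ fails to be semipositive while $B$ is semipositive. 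But the proof stops exactly where the lemma begins. For $\operatorname{rank} M=2$ you offer only a plan: a sign-case analysis that the hypothesis on $E$ ``should guarantee'' can be completed, with the robustness bookkeeping acknowledged as ``the main obstacle.'' That obstacle \emph{is} the content of the lemma, and it is what the paper's proof spends all of its length on: first two structural claims that use semipositivity of $E$ concretely --- Claim 1, that $E$ contains no $2\times 2$ minimally semipositive submatrix, and Claim 2, that $E$ cannot have two positive columns --- which normalize $L(A_1)$ to the form with second column $\alpha_iP_i$, $\alpha_i>0$, and third column $-\beta_iP_i$, $\beta_i\ge 0$; and then an exhaustive analysis (the paper's Cases 1 through 14, organized by the order relations among the $\alpha_i$ and the $\beta_i$), where each case needs a specific sign vector $Vq$ and, when that choice cannot be made robust, a fallback choice together with an argument that at least one of the two choices must succeed.

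The concrete failure mode your sketch leaves open is the following. In the rank-two case your certificate $y^tL(q\mathbf{y}_1^t)$ is typically only $\le 0$ with some coordinates equal to zero, while the perturbation terms are not sign-controlled: $L(A_2)$ and $L(A_3)$ are only known to be semipositive with some positive column, so for $0\neq y\ge 0$ the row vector $y^tL(A_2)$ has a strictly positive entry in the coordinate of that column. Adding $y_2\,y^tL(A_2)$ therefore pushes the certificate \emph{out} of the nonpositive orthant precisely at any zero coordinate sitting under a positive column of $L(A_2)$ --- contrary to your suggestion, the perturbations can never help create a certificate there; they are purely an obstruction, and the positions of those positive columns are not known in advance. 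So one must choose $Vq$, pattern by pattern, so that the base certificate is strictly negative in the dangerous coordinates, or argue (as the paper does with its ``else choose $Vq=\dots$; at least one of these works'' alternation) that among finitely many candidate sign vectors one always succeeds. None of this is carried out in your proposal, and the hypothesis that $E$ is semipositive is never actually used in a deduction. As it stands, the proposal is a correct reconstruction of the paper's strategy with the easy case settled, but the rank-two case --- the heart of the lemma --- is asserted rather than proven, so there is a genuine gap.
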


\begin{proof}
The proof involves several steps. 
	
\medskip
\noindent
\underline{Claim 1:} The matrix $E$ cannot contain any $2 \times 2$ submatrix that is minimally semipositive.
	
\medskip
\noindent
\underline{Proof:} Suppose not. Since $E$ is semipositive and there is a $2 \times 2$ submatrix that is minimally semipositive, we may assume without loss of generality 
that it has the form 
$E = \begin{bmatrix}
\alpha_1 & -\beta_1 \\
-\alpha_2 & \beta_2\\
{\ast} & {\ast}
\end{bmatrix}$. Choose a vector $q$ such that $Vq = (-1, -1, -1)^t$ 
and consider the matrix $B = \begin{bmatrix}
q_1 & 0 & 0\\
q_2 & 0 & 0\\
q_3 & 0 & 0
\end{bmatrix} \ + \ y_2 \begin{bmatrix}
0 & x_1 & 0\\
0 & x_2 & 0\\
0 & x_3 & 0
\end{bmatrix}$, where $y_2 > 0$.
Observe that $B$ is semipositive and that 
$L(B) = \begin{bmatrix*}[r]
-1 & -\alpha_1  & \beta_1 \\
-1 & \alpha_2  & -\beta_2 \\
-1 & {\ast} & {\ast}
\end{bmatrix*} + y_2 L \Bigg (\begin{bmatrix}
0 & x_1 & 0 \\
0 & x_2 & 0 \\
0 & x_3 & 0
\end{bmatrix} \Bigg)$. The first term in the above 
expression is not semipositive, 
whereas the second one is. Choosing $y_2$ small enough, it is possible to make 
$L(B)$ not semipositive. This contradiction proves that $E$ cannot contain any 
$2 \times 2$ minimally semipositive submatrix. This also proves that $E$ is a 
redundantly semipositive matrix.
	
\medskip
\noindent
\underline{Claim 2:} Both the columns of $E$ cannot be positive. 
	
\vspace{0.2cm}
\noindent
Suppose both the columns of $E$ are positive. Consider the semipositive matrix 
$B = \begin{bmatrix}
x_1 \\x_2 \\x_3
\end{bmatrix} \begin{bmatrix}
-1 & y_2 & 0
\end{bmatrix}, \ y_2 > 0$. Calculating $L(B)$ as in the previous case 
and by choosing $y_2$ sufficiently small, we conclude that $L(B)$ is not semipositive. 
This contradiction proves the claim.
	
Thus, $L(A_1) = \begin{bmatrix}
P_1 & \alpha_1 P_1 & -\beta_1 P_1\\
P_2 & \alpha_2 P_2 & -\beta_2 P_2\\
P_3 & \alpha_3 P_3 & -\beta_3 P_3
\end{bmatrix}, \ \alpha_i > 0, \beta_i \geq 0 \ \mbox{for} \ i =1, 2, 3$.
	
\noindent
Consider the matrix 
$V = \begin{bmatrix}
l_{11} & l_{14} & l_{17}\\
l_{41} & l_{44} & l_{47}\\
l_{71} & l_{74} & l_{77}
\end{bmatrix}$. It is an invertible matrix, as the map $L$ is invertible. We discuss 
various possibilities in order to prove that $L(A_1)$ is a rank one matrix.
	
\medskip
\noindent
\underline{Suppose $\alpha_1 = \alpha_2$.}
	
\medskip
\noindent
\underline{Case 1:} Assume that $\beta_2 - \beta_1 > 0$. Choose a vector 
$q = (q_1, q_2, q_3)^t \in \reals^3$ such that $Vq = (-1, 1 ,1)^t$. Consider the matrix 
$B = \begin{bmatrix}
q_1 & 0 & 0\\
q_2 & 0 & 0\\
q_3 & 0 & 0
\end{bmatrix} \ + \ y_2 \begin{bmatrix}
0 & x_1 & 0\\
0 & x_2 & 0\\
0 & x_3 & 0
\end{bmatrix}$ with $y_2 > 0$. Then, 
$B$ is semipositive and $L(B) = \begin{bmatrix*}[r]
-1 & -\alpha_1 & \beta_1\\
1 & \alpha_1 & -\beta_2\\
1 & \alpha_3 & -\beta_3
\end{bmatrix*} \ + \ y_2 
L \Bigg (\begin{bmatrix}
0 & x_1 & 0\\
0 & x_2 & 0\\
0 & x_3 & 0
\end{bmatrix} \Bigg)$. By choosing $y_2$ sufficiently small, $L(B)$ can be made 
to be not semipositive. 
	
\medskip
\noindent
\underline{Case 2:} The case $\beta_1 - \beta_2 > 0$ can be handled similarly.
	
\vspace{0.2cm}
\noindent
Thus, $L(A_1) = \begin{bmatrix}
P_1 & \alpha_1 P_1 & -\beta_1 P_1\\
P_2 & \alpha_1 P_2 & -\beta_1 P_2\\
P_3 & \alpha_3 P_3 & -\beta_3 P_3
\end{bmatrix}$.

\medskip
\noindent
\underline{Suppose $\beta_1 = \beta_3 \neq 0$.}
	
\medskip
\noindent
\underline{Case 5:} Suppose $\alpha_1 - \alpha_3 > 0$. Choose $q = (q_1, q_2, q_3)^t$ 
so that 
$Vq = (1, -1, 1)^t$ and consider the matrix $B = \begin{bmatrix}
q_1 & 0 & 0\\
q_2 & 0 & 0\\
q_3 & 0 & 0
\end{bmatrix} \ + \ y_2 \begin{bmatrix}
0 & x_1 & 0\\
0 & x_2 & 0\\
0 & x_3 & 0
\end{bmatrix}$.
If $y_2 > 0$, then $B$ is semipositive. $L(B) = \begin{bmatrix*}[r]
1 & \alpha_1 & -\beta_1\\
-1 & -\alpha_1 & \beta_1\\
1 & \alpha_3 & -\beta_1
\end{bmatrix*} \ + \ 
y_2 L\Bigg (\begin{bmatrix}
0 & x_1 & 0\\
0 & x_2 & 0\\
0 & x_3 & 0
\end{bmatrix} \Bigg)$. The second term is clearly semipositive, whereas the first 
term is not, as $\beta_1 \neq 0$ and $\alpha_1 > \alpha_3$. If $y_2$ small enough 
exists so that $L(B)$ is not semipositive, then we get a contradiction to $\alpha_1 > \alpha_3$. If not, choose $q = (q_1, q_2, q_3)^t$ such that $Vq = (-1, 1, 1)^t$ 
and proceed with a suitable $B$ as above. Choose $y_2 > 0$ sufficiently small so 
that $L(B)$ is not semipositive. Notice that it is always possible to choose 
$y_2 > 0$ small enough so that $L(B)$ (where $B$ is as above) is not semipositive 
in at least one of the above two cases.
	
\medskip
\noindent
\underline{Case 6:} The case $\alpha_1 < \alpha_3$ can be handled similarly.
	
\noindent
We thus have $\alpha_1 = \alpha_3$ when $\beta_1 = \beta_3, \ \beta_1 \neq 0$.
	
\medskip
\noindent
\underline{Suppose $\beta_1 > \beta_3 \neq 0$.}
	
\medskip
\noindent
\underline{Case 7:} Suppose $\alpha_3 \beta_1 > \alpha_1 \beta_3$. In this case, 
choose a vector $q = (q_1, q_2, q_3)^t$ such that $Vq = (1, 1, -1)^t$. 
Let $B = \begin{bmatrix}
q_1 & 0 & 0\\
q_2 & 0 & 0\\
q_3 & 0 & 0
\end{bmatrix} \ + \ y_2 \begin{bmatrix}
0 & x_1 & 0\\
0 & x_2 & 0\\
0 & x_3 & 0
\end{bmatrix}$. Then, $L(B) = \begin{bmatrix*}[r]
1 & \alpha_1 & -\beta_1\\
1 & \alpha_1 & -\beta_1\\
-1 & -\alpha_3 & \beta_3
\end{bmatrix*} \ + 
\ y_2 L \Bigg (\begin{bmatrix}
0 & x_1 & 0\\
0 & x_2 & 0\\
0 & x_3 & 0
\end{bmatrix} \Bigg)$. Since the first term is not semipositive, it is possible 
to choose a $y_2 > 0$ sufficiently small so that $L(B)$ is not semipositive.
	
\medskip
\noindent
\underline{Case 8:} Suppose $\alpha_3 \beta_1 \leq \alpha_1 \beta_3$. Choose a 
$q = (q_1, q_2,q_3)^t$ such that $Vq = (1, -1, -1)^t$ and consider the matrix $B$ as 
constructed in the previous cases. If there exists a $y_2 > 0$ small enough so that 
$L(B)$ is not semipositive, then we get a contradiction to 
$\alpha_3 \beta_1 \leq \alpha_1 \beta_3$. Else, choose $q$ such that 
$Vq = (-1, 1 -1)^t$ and proceed as above. Once again, note that 
in at least one of these cases, a $y_2 > 0$ that is sufficiently small can be chosen 
so that $L(B)$ is not semipositive.
	
\medskip
\noindent
\underline{Suppose $\beta_3 > \beta_1$.}
	
\medskip
\noindent
\underline{Case 9:} This case can be handled similar to the case when 
$\beta_1 > \beta_3$.
	
\medskip
\noindent
\underline{Suppose $\alpha_1 > \alpha_2 > \alpha_3$ and 
$\beta_1 = \beta_2 \neq \beta_3$.}
	
\medskip
\noindent
\underline{Case 10:} If $\beta_3 > \beta_1$, then choose a vector $q$ so that 
$Vq = (-1, 1, 1)^t$. Construct a matrix semipositive $B$ as before and observe that 
it is possible to choose a $y_2 > 0$ small enough so that $L(B)$ is not semipositive, 
giving a contradiction to $\beta_3 > \beta_1$.
	
\medskip
\noindent
\underline{Case 11:} The case $\beta_1 > \beta_3$ can be handled similarly.
	
\medskip
\noindent
\underline{Case 12:} Suppose $\alpha_1 > \alpha_2 > \alpha_3$ and 
$\beta_1 = \beta_2 = \beta_3 \neq 0$. Choose $q$ so that $Vq = (-1, 1, 1)^t$. Construct 
a semipositive matrix $B$ as before. If there exists a $y_2 > 0$ small enough so that 
$L(B)$ is not semipositive, then we get a contradiction to our assumption. 
Else, choose a vector $q$ so that $Vq = (1, -1, 1)^t$ and proceed as above. Note that 
in at least one of the cases, it is possible to get a semipositive matrix $B$ so that 
$L(B)$ is not semipositive.
	
\medskip
\noindent
\underline{Suppose $\alpha_1 > \alpha_2 > \alpha_3$ and $\beta_1 \neq \beta_2 \neq \beta_3$.}
	
\medskip
\noindent
\underline{Case 13:} Suppose the vectors $(\alpha_1, \alpha_2, \alpha_3)^t$ and 
$(-\beta_1, -\beta_2, -\beta_3)^t$ are linearly dependent. Let $\delta > 0$ be such that 
$(-\beta_1, -\beta_2, -\beta_3)^t = -\delta (\alpha_1, \alpha_2, \alpha_3)^t$. Choose a 
vector $q$ so that $Vq  = (1, -1, 1)^t$ and form a semipositive matrix $B$ as before. 
Observe that $L(B) = \begin{bmatrix*}[r]
1 & \alpha_1 & -\delta \alpha_1\\
-1 & -\alpha_2 & \delta \alpha_2\\
1 & \alpha_3 & -\delta \alpha_3
\end{bmatrix*} \ + \ y_2 L \Bigg( \begin{bmatrix}
0 & x_1 & 0\\
0 & x_2 & 0\\
0 & x_3 & 0
\end{bmatrix} \Bigg)$. 
If there exists a $y_2 > 0$ small enough so that $L(B)$ is not semipositive, 
then we get a contradiction to our assumption. Else, choose a vector $q$ such that 
$Vq = (1, 1, -1)^t$ and proceed as above. Notice that in this case, it is possible 
to choose a $y_2$ small enough that makes $L(B)$ not semipositive in at leat one 
of the cases.
	
\medskip
\noindent
\underline{Case 14:} Suppose the vectors $(\alpha_1, \alpha_2, \alpha_3)^t$ and 
$(-\beta_1, -\beta_2, -\beta_3)^t$ are linearly independent. Consider the invertible 
matrix 
$D = \begin{bmatrix*}[r]
-1 & -\alpha_1 & \beta_1\\
1 & \alpha_2 & -\beta_2\\
-1 & -\alpha_3 & \beta_3
\end{bmatrix*}$. Suppose $\det D < 0$. Notice that 
$D^{-1} = (\frac{1}{\det D}) \begin{bmatrix}
{\ast} & {\ast} & {\ast}\\
{\ast} & {\ast} & {\ast}\\
\alpha_2 - \alpha_3 & \alpha_1 - \alpha_3 & \alpha_1 - \alpha_2
\end{bmatrix}$. Notice that the last rwo entries of the matrix 
are positive. This implies that the vector 
$u = (\frac{1}{\det D}) 
(\alpha_2 - \alpha_3,  \alpha_1 - \alpha_3,  \alpha_1 - \alpha_2)^t < 0$. 
Now $D^t u = - D^t (-u) = (0, 0, 1)^t \geq 0$. Thus, $D$ is not semipositive 
(by the Theorem of the alternative \ref{alternative-orthant}). Now choose a vector 
$q$ so that $Vq = (-1, 1, -1)^t$ and form the semipositive matrix $B$ as in the 
previous steps. 
Then, $L(B) = \begin{bmatrix*}[r]
-1 & -\alpha_1 & \beta_1\\
1 & \alpha_2 & -\beta_2\\
-1 & -\alpha_3 & \beta_3
\end{bmatrix*} \ + \ y_2 L \Bigg( \begin{bmatrix}
0 & x_1 & 0\\
0 & x_2 & 0\\
0 & x_3 & 0
\end{bmatrix} \Bigg)$. Choose
a $y_2$ small enough so that $L(B)$ is not semipositive. If $\det D > 0$, then, 
$(-D)^{-1} = (\frac{1}{- \det D}) \begin{bmatrix}
{\ast} & {\ast} & {\ast}\\
{\ast} & {\ast} & {\ast}\\
\alpha_2 - \alpha_3 & \alpha_1 - \alpha_3 & \alpha_1 - \alpha_2
\end{bmatrix}$. The vector 
$u = (\frac{1}{- \det D}) 
(\alpha_2 - \alpha_3, \alpha_1 - \alpha_3,  \alpha_1 - \alpha_2)^t < 0$ 
and $- (-D^t) (-u) = (0, 0, 1)^t \geq 0$. Once again, by the 
Theorem of the Alternative (Theorem \ref{alternative-orthant}), 
we have $-D \notin S(\reals^3_+)$. Now choose a vector $q$ so that 
$Vq = (1, -1, 1)^t$ and proceed as in the case when $\det D < 0$.
	
\medskip
\noindent
Combining all these steps, we seee that $L(A_1) = \begin{bmatrix}
P_1 & \alpha P_1 & -\beta P_1\\
P_2 & \alpha P_2 & -\beta P_2\\
P_3 & \alpha P_2 & -\beta P_3
\end{bmatrix}, \  \mbox{where} \ 
\alpha > 0$ and $\beta \geq 0$. 
\end{proof}

\medskip
\begin{remark}
	Consider the following flowchart. 
	\begin{center}
		\begin{forest}, for tree={draw, rounded corners, minimum width=0.1cm, minimum height=4em},
			forked edges,baseline, qtree
			[$\alpha_1>\alpha_2>\alpha_3$ \ and \\ $\beta_1\neq \beta_2 \neq \beta_3$
			[{$\alpha_1=\alpha_2 \neq \alpha_3$}\ and \\ $\beta_1\neq \beta_2 \neq \beta_3$
			[{$\alpha_1=\alpha_2 \neq \alpha_3$}\ and \\ {$\beta_1 =\beta_2 \neq \beta_3$}
			[{$\alpha_1=\alpha_2 = \alpha_3$} \ and\\ {$\beta_1 =\beta_2 \neq \beta_3$}]
			[{$\alpha_1=\alpha_2 \neq \alpha_3$}\ and\\ {$\beta_1 =\beta_2 = \beta_3 \neq 0$}]
			]]
			[$\alpha_1>\alpha_2>\alpha_3$\ and \\ {$\beta_1 = \beta_2 \neq \beta_3$}
			[{$\alpha_1=\alpha_2 \neq \alpha_3$} \ and\\ {$\beta_1 =\beta_2 \neq \beta_3$}
			]
			[$\alpha_1>\alpha_2>\alpha_3$\ and\\  
			{$\beta_1 = \beta_2 = \beta_3 \neq 0$}
			[{$\alpha_1=\alpha_2 \neq \alpha_3$}\ and\\ {$\beta_1 =\beta_2 = \beta_3 \neq 0$}]
			[{$\alpha_1\neq \alpha_2 = \alpha_3$}\ and\\ {$\beta_1 =\beta_2 = \beta_3 \neq 0$}]
			]
			]]
		\end{forest}
	\end{center}
	
\medskip
\noindent
Lemma \ref{positive-column-1} shows that each of the steps in the above flowchart fails. 
\end{remark}

We now consider the case where the matrix $E$ is not semipositive, but $-E$ is, assuming 
the existence of a positive column.

\begin{lemma}\label{positive-column-2}
Let $L$ be an invertible linear map on $M_3$ that preserves semipositive matrices. 
Suppose the first column of $L(A_1)$ is positive, $-E$ is semipositive, whereas 
$E$ is not. Then, $L(A_1)$ has rank one.
\end{lemma}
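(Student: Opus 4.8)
The plan is to reduce Lemma~\ref{positive-column-2} to the already-established Lemma~\ref{positive-column-1} by running the entire argument there with $-E$ in place of $E$. At this stage we already know, from Lemma~\ref{positive-column-existence} and the rank-one computation preceding Lemma~\ref{positive-column-1}, that $L(A_1)=\operatorname{diag}(P_1,P_2,P_3)\,[\mathbf 1\mid E]$ with all $P_i>0$, where $\mathbf 1=(1,1,1)^t$, and that the first-column coefficient matrix $V=\begin{bmatrix}l_{11}&l_{14}&l_{17}\\ l_{41}&l_{44}&l_{47}\\ l_{71}&l_{74}&l_{77}\end{bmatrix}$ is invertible since $L$ is. Invertibility of $V$ together with the rank-one form of $L(A_1)$ gives, for every $q\in\reals^3$, that $L(qe_1^t)=\operatorname{diag}(Vq)\,[\mathbf 1\mid E]$; so by solving $Vq=s$ we may realize any sign vector $s\in\{-1,1\}^3$ as the first column of a leading term $\operatorname{diag}(s)[\mathbf 1\mid E]$. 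This is precisely the mechanism used throughout Lemma~\ref{positive-column-1}, and it is insensitive to whether $E$ or $-E$ is the semipositive matrix.

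First I would fix the sign pattern of $E$. Since $-E$ is semipositive, the analogues of Claims~1 and~2 of Lemma~\ref{positive-column-1}, applied verbatim to $-E$, show that no $2\times2$ submatrix of $-E$ is minimally semipositive and that the two columns of $-E$ cannot both be positive. Each such step is obtained, as in Lemma~\ref{positive-column-1}, by choosing $q$ with $Vq$ a suitable vector in $\{-1,1\}^3$ so that the leading matrix $\operatorname{diag}(Vq)[\mathbf 1\mid E]$ sends $(\reals^3_+)^\circ$ strictly outside $(\reals^3_+)^\circ$ (hence is robustly non-semipositive), forming $B=qe_1^t+y_2\,\mathbf x e_2^t$, which is semipositive because its second column $y_2\mathbf x$ is positive, and then taking $y_2>0$ small enough that $L(B)=\operatorname{diag}(Vq)[\mathbf 1\mid E]+y_2 L(\mathbf x e_2^t)$ remains non-semipositive, contradicting that $L$ preserves $S(\reals^3_+)$. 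Feeding these two facts into the characterization that an $m\times2$ matrix is semipositive exactly when every $2\times2$ submatrix is, forces, after possibly relabelling the two columns, one column of $-E$ to be strictly positive and the other nonpositive; equivalently $\alpha_1,\alpha_2,\alpha_3<0$ and $\beta_1,\beta_2,\beta_3\ge 0$.

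With this sign pattern in hand, I would then mirror the case analysis of Lemma~\ref{positive-column-1} (the comparisons of $\alpha_i$ against $\alpha_j$, of $\beta_i$ against $\beta_j$, the cross-products $\alpha_i\beta_j$, and the linear dependence versus independence of the two columns of $E$, using the Theorem of the Alternative~\ref{alternative-orthant} in the independent case) to conclude $\alpha_1=\alpha_2=\alpha_3$ and $\beta_1=\beta_2=\beta_3$. Since the rows of $-E$ coincide exactly when the rows of $E$ coincide, this yields $L(A_1)=\operatorname{diag}(P_1,P_2,P_3)\,[\mathbf 1\mid \alpha\mid \beta]$, a rank one matrix, as claimed. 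I expect the main obstacle to lie entirely in the sign bookkeeping of this last step: in each case one must select the sign vector $s=Vq$ (one of the finitely many choices, all realizable since $V$ is invertible) for which $\operatorname{diag}(s)[\mathbf 1\mid E]$ is non-semipositive in a way stable under the one-parameter perturbation $y_2 L(\mathbf x e_2^t)$, and verify, as in Lemma~\ref{positive-column-1}, that at least one of the two natural choices of $s$ succeeds. Because the semipositive matrix here is $-E$ rather than $E$, every sign choice is flipped relative to Lemma~\ref{positive-column-1}, and keeping this flip consistent across all cases is the delicate, though routine, part.
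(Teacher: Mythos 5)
There is a genuine gap, and it sits exactly at the step your whole reduction rests on: the $E\mapsto -E$ symmetry does not exist in this lemma. The decomposition is $L(A_1)=\operatorname{diag}(P_1,P_2,P_3)\,[\mathbf 1\mid E]$, and the column $\mathbf 1$ stays \emph{positive} regardless of whether $E$ or $-E$ is the semipositive block; every construction in Lemma \ref{positive-column-1} is anchored to that fixed positive column, so those arguments do not transfer ``verbatim'' to $-E$. Concretely, both of your intermediate claims are false. Take $L(A)=AY$ with $Y=\begin{bmatrix*}[r]1&-1&-1\\0&1&0\\0&0&1\end{bmatrix*}$, which is inverse nonnegative, so $L$ is an invertible into preserver of $S(\reals^3_+)$ by Theorem 2.4 of \cite{dgjjt}; with $\mathbf x=(1,1,1)^t$ and $y_1=1$ one gets $L(A_1)=\mathbf x\,(1,-1,-1)$, whose first column is positive and whose block $E$ has all rows equal to $(-1,-1)$. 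Here $-E$ is semipositive, $E$ is not, and \emph{both} columns of $-E$ are positive: your analogue of Claim 2 (``the two columns of $-E$ cannot both be positive'') fails, and your asserted sign pattern ($\beta_1,\beta_2,\beta_3\ge 0$, i.e.\ the second column of $E$ nonnegative) fails as well --- indeed the paper proves the opposite, namely that \emph{every} entry of $E$ is nonpositive, i.e.\ $-E\ge 0$. One can see why Claim 2 cannot transfer: for $B=\mathbf x(-1,y_2,0)$ the leading term of $L(B)$ is $\operatorname{diag}(P)[-\mathbf 1\mid -E]$, which is entrywise negative (hence robustly non-semipositive) when $E>0$, but is perfectly semipositive when $-E>0$, since it then has two positive columns; so no contradiction can be extracted.

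There is also a secondary logical flaw: even granting your Claims 1 and 2, ``feeding them into the $m\times 2$ characterization'' does not force a clean column sign pattern. The matrix $\begin{bmatrix*}[r]1&1\\1&-1\\1&1\end{bmatrix*}$ is semipositive, has no minimally semipositive $2\times 2$ submatrix, and does not have two positive columns, yet its second column is neither positive nor nonpositive; mixed-sign columns are simply not excluded by those facts. This is why the paper, after proving the Claim 1 analogue (no $2\times2$ minimally semipositive submatrix of $-E$), must do genuinely new work rather than a formal sign flip of Lemma \ref{positive-column-1}: it uses the hypothesis that $E$ itself is \emph{not} semipositive to place one nonpositive entry in the second column of $E$, and then runs fresh perturbation arguments --- semipositive matrices $B=qe_1^t+y_2\mathbf x e_2^t$ with $Vq$ chosen among sign vectors such as $(1,-1,1)^t$, $(1,1,-1)^t$, $(1,-1,-1)^t$, $(-1,1,1)^t$ and $y_2>0$ small --- to kill every remaining positive entry of $E$. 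Only after establishing $E\le 0$ entrywise (with negative first column) does it rerun the equality analysis of Lemma \ref{positive-column-1} to force all rows of $E$ to coincide. Your proposal is missing precisely this middle stage, and the sign pattern from which you would launch the final stage is the wrong one, so the mirrored case analysis you describe would be applied under hypotheses that need not hold.
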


\medskip
\begin{proof}
Let $E$ and $V$ be as in Lemma \ref{positive-column-1}. We begin by proving that 
$-E$ does not contain any $2 \times 2$ mininimally semipositive matrix.

\medskip
\noindent
\underline{Claim 1:} $-E$ does not contain any $2 \times 2$ mininimally semipositive 
matrix. 
	
\noindent
Suppose not.
	
\medskip
\noindent
\underline{Case 1:} Suppose $-E = \begin{bmatrix*}[r]
-\alpha_1 & \beta_1\\
\alpha_2 & -\beta_2\\
-\alpha_3 & \beta_3
\end{bmatrix*}$, where $\alpha_1 \geq 0, \alpha_2 > 0, 
\alpha_3 \geq 0, \beta_1 > 0, \beta_2 \geq 0, \beta_3 > 0$. Choose a vector $q$ 
such that 
$Vq = (1, 1, -1)^t$ and form the matrix $B = \begin{bmatrix}
q_1 & 0 & 0\\
q_2 & 0 & 0\\
q_3 & 0 & 0
\end{bmatrix} \ + \ y_2 \begin{bmatrix}
0 & x_1 & 0\\
0 & x_2 & 0\\
0 & x_3 & 0
\end{bmatrix}, \ y_2 > 0$, 
so that $B$ is semipositive. Then, $L(B) = \begin{bmatrix*}[r]
1 & \alpha_1 & -\beta_1\\
1 & -\alpha_2 & \beta_2\\
-1 & -\alpha_3 & \beta_3
\end{bmatrix*} \ + \ y_2 L \Bigg(\begin{bmatrix}
0 & x_1 & 0\\
0 & x_2 & 0\\
0 & x_3 & 0
\end{bmatrix} \Bigg)$. 
If there exists a $y_2> 0$ small enough so that $L(B)$ is not semipositive, then 
we get a contradiction. Else, choose a vector $q$ such that $Vq = (-1, 1,1)^t$ and 
proceed as above. In at least one of these cases, a $y_2$ small enough will exist 
such that $L(B)$ is not semipositive.
	
\medskip
\noindent
\underline{Case 2:} Suppose $-E = \begin{bmatrix*}[r]
-\alpha_1 & \beta_1\\
\alpha_2 & -\beta_2\\
\alpha_3 & -\beta_3
\end{bmatrix*}$, where $\alpha_1 \geq 0, \alpha_2 > 0, 
\alpha_3 > 0, \beta_1 > 0, \beta_2 \geq 0, \beta_3 \geq 0$. 
This case can be handled similar to Case 1.
	
\medskip
\noindent
\underline{Case 3:} Suppose $-E = \begin{bmatrix*}[r]
-\alpha_1 & \beta_1\\
\alpha_2 & -\beta_2\\
\alpha_3 & \beta_3
\end{bmatrix*}$, where $\alpha_1 \geq 0, \alpha_2 > 0, 
\alpha_3 > 0, \beta_1 > 0, \beta_2 \geq 0, \beta_3 > 0$. Choose a vector $q$ such that 
$Vq = (-1, -1, 1)^t$ and construct a semipositive matrix $B$ as before. Then
$L(B) = \begin{bmatrix*}[r]
-1 & -\alpha_1 & \beta_1\\
-1 & \alpha_2 & -\beta_3\\
1 & -\alpha_3 & -\beta_3
\end{bmatrix*} \ + \ y_2 L \Bigg(\begin{bmatrix}
0 & x_1 & 0\\
0 & x_2 & 0\\
0 & x_3 & 0
\end{bmatrix} \Bigg)$. Choosing $y_2$ small enough, 
we can make $L(B)$ not semipositive. 

\medskip	
\noindent
Cases 1, 2 and 3 prove Claim 1. 

\medskip	
\noindent
Since $E$ is not semipositive, but $-E$ is, we see that $E = \begin{bmatrix*}[r]
-\alpha_1 & -\beta_1\\
-\alpha_2 & \beta_2\\
-\alpha_3 & \beta_3
\end{bmatrix*}$, where 
$\alpha_i > 0$ for $i = 1, 2, 3$, $\beta_1 \geq 0, \beta_2, \beta_3 \in \reals$.
	
\medskip
\noindent
Suppose $\beta_2 > 0$ and $\beta_3 > 0$. Choose a vector $q$ such that 
$Vq = (1, -1, 1)^t$ and construct a semipositive matrix $B$ as before. Then, 
$L(B) = \begin{bmatrix*}[r]
1 & -\alpha_1 & -\beta_1\\
-1 & \alpha_2 & \beta_2\\
1 & -\alpha_3 & \beta_3
\end{bmatrix*} \ + \ y_2 L \Bigg(\begin{bmatrix}
0 & x_1 & 0\\
0 & x_2 & 0\\
0 & x_3 & 0
\end{bmatrix} \Bigg)$. If there exists a $y_2$ small enough such that 
$L(B)$ is not semipositive, then we get a contradiction to our assumption. If not, 
choose a vector $q$ so that $Vq = (1, 1, -1)^t$ and proceed as above. As before, 
note that it is possible to choose a suitable $y_2 > 0$ that makes $L(B)$ not 
semipositive in at least one of the cases. Thus, $E = \begin{bmatrix*}[r]
-\alpha_1 & -\beta_1\\
-\alpha_2 & -\beta_2\\
-\alpha_3 & \beta_3
\end{bmatrix*}$, where $\alpha_i > 0$ for $i = 1, 2, 3$, 
$\beta_1 \geq 0, \beta_2 \geq 0, \beta_3 \in \reals$. If $\beta_3 > 0$, then choose a 
$q$ such that $Vq = (1, -1, -1)^t$ or $(-1, 1, 1)^t$ and proceed as above. Therefore, 
$E = \begin{bmatrix}
-\alpha_1 & -\beta_1\\
-\alpha_2 & -\beta_2\\
-\alpha_3 & -\beta_3
\end{bmatrix}$, where $\alpha_i > 0$ and $\beta_i \geq 0$ for $i = 1, 2, 3$. We can now 
proceed as in \ref{positive-column-1} to show that all the $\alpha_i$'s are equal, 
all the $\beta_i$'s are equal. This finishes the proof.
\end{proof}

The next case is the following.

\begin{lemma}\label{positive-column-3}
Let $L$ be an invertible linear map on $M_3$ that preserves semipositive matrices. 
Suppose the first column of $L(A_1)$ is positive and both $E$ as well as $-E$ are not semipositive. Then, $L(A_1)$ has rank one.
\end{lemma}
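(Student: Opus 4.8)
The plan is to show that the two negative hypotheses force the rows $(\alpha_i,\beta_i)$ of $E$ to be identical, which is exactly the condition that $L(A_1)=\begin{bmatrix}P_1 & \alpha_1 P_1 & \beta_1 P_1\\ P_2 & \alpha_2 P_2 & \beta_2 P_2\\ P_3 & \alpha_3 P_3 & \beta_3 P_3\end{bmatrix}$ (with $P_i>0$) has rank one. Note in advance that a common row $(\alpha,\beta)$ can be neither entrywise $\le 0$ nor entrywise $\ge 0$ unless it vanishes; so once equality of the rows is established, the standing assumption that \emph{neither} $E$ \emph{nor} $-E$ is semipositive will in fact yield $\alpha=\beta=0$, i.e. $L(A_1)=(P_1,P_2,P_3)^t(1,0,0)$.

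First I would extract what the hypotheses give through the Theorem of the Alternative (Theorem \ref{alternative-orthant}) applied to the $3\times 2$ matrices $E$ and $-E$: since $E$ is not semipositive there is $0\neq u\in\reals^3_+$ with $\sum_i u_i(\alpha_i,\beta_i)\le 0$, and since $-E$ is not semipositive there is $0\neq v\in\reals^3_+$ with $\sum_i v_i(\alpha_i,\beta_i)\ge 0$. These two inequalities restrict the admissible sign patterns of the rows $r_i=(\alpha_i,\beta_i)$ and identify, in each configuration, the coordinate direction in which $E$ is deficient; this is the bookkeeping that organizes the case analysis.

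The engine is the same device used in Lemmas \ref{positive-column-1} and \ref{positive-column-2}, now fed by the positive column of $L(A_2)$. Since $V$ is invertible I can normalize $L(pe_1^t)=\begin{bmatrix}1&\alpha_1&\beta_1\\ 1&\alpha_2&\beta_2\\ 1&\alpha_3&\beta_3\end{bmatrix}$, and since $A_2$ is semipositive, $L(A_2)$ has a positive column. For every $c\in\reals^3$ the matrix $ce_1^t+A_2$ shares that positive column and is therefore semipositive, with image $L(ce_1^t+A_2)=\begin{bmatrix}d_i+\beta_{i,1}Q_i & \alpha_i d_i+Q_i & \beta_i d_i+\beta_{i,2}Q_i\end{bmatrix}_i$, where $d=Vc$ ranges freely over $\reals^3$ and the $Q_i>0$ are governed by a single free positive scale (the column block of $L(A_2)$ being rank one fixes the ratios among them). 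The plan is to choose $d$ by prescribing $Vq=(\pm1,\pm1,\pm1)^t$, exactly as before, together with a sufficiently small positive scale for the $A_2$-term, so that one \emph{row} of the image becomes entrywise nonpositive. A single nonpositive row already makes a matrix non-semipositive, contradicting preservation; running this over the finitely many sign patterns permitted by $u$ and $v$ forces $\alpha_1=\alpha_2=\alpha_3$ and $\beta_1=\beta_2=\beta_3$, after which the alternative data forces the common row to vanish.

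The hard part will be the case analysis itself. Because neither $E$ nor $-E$ is semipositive, the rows $r_i$ are maximally spread in sign, so there is the least structure to exploit and the most subcases to close, and one must verify in each that the required values of $d$ and of the $Q_i$ are \emph{simultaneously} realizable despite the fixed ratios among the $Q_i$. I expect that in certain configurations a single perturbation in the $A_2$-direction will not suffice to produce a nonpositive row, and that a second perturbation in the $A_3$-direction (an added term $y_3\,xe_3^t$) will be needed to manufacture the contradiction, precisely as Lemma \ref{positive-column-1} occasionally required the auxiliary parameter $y_3$. Assembling these subcases, each closed by an explicit semipositive witness $B$ with $L(B)$ non-semipositive, is where essentially all of the work lies, which is why this computation is relegated to the appendix rather than the main text.
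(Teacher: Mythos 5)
Your endgame (equal rows of $E$ plus non-semipositivity of both $\pm E$ forces the common row to vanish) is exactly right and matches the paper, and your witness family $ce_1^t + tA_2 + sA_3$, semipositive because it inherits a positive column, with image rows $\bigl(d_i + t\mu_iQ_i + s\nu_iR_i,\ \alpha_id_i + tQ_i + s\rho_iR_i,\ \beta_id_i + t\delta_iQ_i + sR_i\bigr)$ and $d = Vc$ free, is the same family the paper works with. The genuine gap is in the part you defer as ``essentially all of the work'': your only contradiction mechanism is an entrywise nonpositive row, produced with \emph{small positive} scales $t,s$, and that mechanism provably cannot close this case. Take $E$ with rows $(1,-1)$, $(-1,1)$, $(0,0)$ --- here $E$ and $-E$ both fail to be semipositive because of a conflict \emph{across} rows, not within any row --- and suppose the unknown parameters $\mu_i,\delta_i$ of $L(A_2)$ and $\nu_i,\rho_i$ of $L(A_3)$ are all nonnegative. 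Then for any witness with $t,s\ge 0$ not both zero: row $3$ has last two entries summing to $t(1+\delta_3)Q_3 + s(1+\rho_3)R_3 > 0$, so it is never nonpositive; row $1$ nonpositive would force $d_1 \le -(tQ_1+s\rho_1R_1)$ and $d_1 \ge t\delta_1Q_1 + sR_1$ simultaneously, i.e.\ $t(1+\delta_1)Q_1 + s(1+\rho_1)R_1 \le 0$, impossible; likewise row $2$. So no witness in your family ever exhibits a nonpositive row, although this configuration is precisely one the lemma must exclude. The failure is structural, not an accident of bookkeeping: non-semipositivity is a closed condition, not an open one, and in this lemma the relevant leading terms sit on its boundary. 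For instance the leading term $-[\mathbf{1}\,|\,E]$ (from $Vq = (-1,-1,-1)^t$) is not semipositive, yet $-[\mathbf{1}\,|\,E] + y_2[\mu_iQ_i\,|\,Q_i\,|\,\delta_iQ_i]$ \emph{is} semipositive for every small $y_2>0$ in the configuration above (take $z=(\epsilon,1,1)^t$ with $\epsilon$ small relative to $y_2$). So the ``choose the scale sufficiently small'' engine you import from Lemmas \ref{positive-column-1} and \ref{positive-column-2} genuinely breaks here; this is exactly why the present case needs, and in the paper receives, a different argument.

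What the paper does instead --- and this is the idea your plan is missing --- is to use preservation \emph{positively} rather than to hunt for a contradiction, and then reduce to the previous case. Since $-pe_1^t + A_2$ is semipositive (positive second column), its image $-[\mathbf{1}\,|\,\alpha\,|\,\beta] + y_2[\mu Q\,|\,Q\,|\,\delta Q]$ must be semipositive; for small $y_2$ its first column is negative, and deleting a nonpositive column from a semipositive matrix leaves a semipositive matrix, so one obtains for free that $-(E - y_2D)$ is semipositive, where row $i$ of $D$ is $Q_i(1,\delta_i)$. (Note that this forced semipositivity is exactly the phenomenon that blocked your contradiction above.) Now $L(pe_1^t - y_2xe_2^t) = [\,1-y_2\mu_iQ_i \mid \alpha_i - y_2Q_i \mid \beta_i - y_2\delta_iQ_i\,]$ has positive first column and an $E$-part whose negative is semipositive, i.e.\ the configuration of Lemma \ref{positive-column-2} (or of Lemma \ref{positive-column-1}, if $E-y_2D$ happens to be semipositive); running that case's argument on this perturbed configuration makes all rows of $E - y_2D$ equal, hence $\alpha_1=\alpha_2=\alpha_3$, and your own closing observation then gives $\alpha_i = 0$ since neither $E$ nor $-E$ is semipositive. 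Repeating with $A_3$ in place of $A_2$ gives $\beta_i = 0$, so $E=0$ and $L(A_1) = (P_1,P_2,P_3)^t(1,0,0)$ has rank one, with no fresh sign-pattern case analysis at all. To salvage a direct attack along your lines you would have to upgrade the certificate of non-semipositivity from ``nonpositive row'' to general Theorem-of-the-Alternative certificates and control them under perturbation against a closed condition; the paper's reduction is precisely the device that makes this unnecessary.
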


\begin{proof}
Assuming that the second column of $L(A_2)$ is positive, write 
$L(A_2) = \begin{bmatrix}
\mu_1 Q_1 & Q_1 & \delta_1 Q_1\\
\mu_2 Q_2 & Q_2 & \delta_2 Q_2\\
\mu_3 Q_3 & Q_3 & \delta_3 Q_3
\end{bmatrix}$, where 
$Q_1 = l_{22}x_1 + l_{25}x_2 + l_{28}x_3, Q_2 = l_{52}x_1 + l_{55}x_2 + l_{58}x_3, 
Q_3 = l_{82}x_1 + l_{85}x_2 + l_{88}x_3, \mu_i, \delta_i \in \reals, 
i = 1, 2, 3$. Choose a vector $p = (p_1, p_2, p_3)^t \in \reals^3$ such that 
$L\Bigg (\begin{bmatrix}
          p_1 & 0 & 0\\
          p_2 &0 & 0\\
          p_3 & 0 & 0
         \end{bmatrix} \Bigg) = \begin{bmatrix}
         						  1 & \alpha_1 & \beta_1\\
         						  1 & \alpha_2 & \beta_2\\
         						  1 & \alpha_3 & \beta_3
								\end{bmatrix}$. If $A_2$ is semipositive, the the 
matrix $\begin{bmatrix}
-p_1 & 0 & 0\\
-p_2 & 0 & 0\\
-p_3 & 0 & 0
\end{bmatrix} + A_2$ is semipositive and so is its image under $L$. It is possible 
to choose a $y_2 > 0$ so that the first column of the image of the above matrix under 
$L$ is negative. Now, choose a $q = (q_1, q_2, q_3)^t \in \reals^3$ such that 
$L \Bigg( \begin{bmatrix}
          p_1 & 0 & 0\\
          p_2 &0 & 0\\
          p_3 & 0 & 0
         \end{bmatrix} - y_2 \begin{bmatrix}
          					  0 & q_1 & 0\\
          				      0 & q_2 & 0\\
          					  0 & q_3 & 0
         					 \end{bmatrix} \Bigg) = 
\begin{bmatrix}
1 - \mu_1 y_2 & \alpha_1 - y_2 & \beta_1 - \delta_1 y_2\\
1 - \mu_2 y_2 & \alpha_2 - y_2 & \beta_2 - \delta_2 y_2\\
1 - \mu_3 y_2 & \alpha_3 - y_2 & \beta_3 - \delta_3 y_2
\end{bmatrix}$. Choose $y_2 > 0$ sufficiently small so that the first column of the 
above matrix is positive and negative of the submatrix formed from the last two 
columns of the above matrix is semipositive. It then follows from the previous case 
that $\alpha_1 = \alpha_2 = \alpha_3 = 0$. Similarly, we can show that 
$\beta_1 = \beta_2 = \beta_3 = 0$. These two put together implies that $E = 0$. 
Thus, $L(A_1) = \begin{bmatrix}
P_1 \\
P_2 \\
P_3
\end{bmatrix} \begin{bmatrix}
1 & 0 & 0
\end{bmatrix}$ and therefore has rank one in this case too.  
\end{proof} 

A similar argument shows that $L(A_2)$ and $L(A_3)$ have rank one. We now have the 
result on the structure of a preserver in this case.

\begin{theorem}\label{3x3-main-theorem}
Let $L$ be an invertible linear map on $M_3$ that preserves $S(\reals_+^3)$. Then 
$L(A) = XAY$ for all $A \in M_{3}$, for some invertible row positive matrix $X$ and an 
inverse nonnegative matrix $Y$.
\end{theorem}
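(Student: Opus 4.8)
The plan is to specialize the proof of Theorem~\ref{mxn-main-thm} to $m = n = 3$, the preceding lemmas of this appendix having supplied the ingredient that was deferred there. Indeed, Lemmas~\ref{positive-column-1},~\ref{positive-column-2} and~\ref{positive-column-3} together assert that whenever $A_i$ is semipositive the image $L(A_i)$ has rank one, which is precisely the $n = 3$ instance of Theorem~\ref{mxn-L(A_1)-rank-one-part-2}; the same reasoning applies to $A_2$ and $A_3$. Writing $x = (x_1,x_2,x_3)^t > 0$ and letting $P_1,P_2,P_3$ be as defined before Lemma~\ref{positive-column-existence}, I may therefore record, after choosing the positive column whose existence is guaranteed by Lemma~\ref{positive-column-existence},
\[
L(A_1) = \begin{bmatrix} P_1 \\ P_2 \\ P_3 \end{bmatrix} \begin{bmatrix} 1 & \alpha_1 & \alpha_2 \end{bmatrix}, \qquad L(A_2) = \begin{bmatrix} Q_1 \\ Q_2 \\ Q_3 \end{bmatrix} \begin{bmatrix} \mu_1 & 1 & \mu_2 \end{bmatrix},
\]
and analogously for $L(A_3)$, where $Vx = (P_1,P_2,P_3)^t$ for the invertible matrix $V$ of Lemma~\ref{positive-column-1}.

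First I would establish the analogue of Claim~1 in the proof of Theorem~\ref{mxn-main-thm}: the first columns $(P_1,P_2,P_3)^t$, $(Q_1,Q_2,Q_3)^t$ and the corresponding column of $L(A_3)$ are pairwise proportional, with positive proportionality factors. If, say, the relevant $l$-vectors attached to $L(A_1)$ and $L(A_2)$ were linearly independent, one could solve a $2 \times 2$ nonnegative system to build a semipositive matrix $B$ supported on its first two columns whose image $L(B)$ has a zero first row, contradicting $L(B) \in S(\reals^3_+)$. Thus $Q_j = \lambda_j P_j$ with each $\lambda_j > 0$, and similarly $R_j = \kappa_j P_j$ for the column of $L(A_3)$.

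The main work is the analogue of Claim~2, that the proportionality factors are constant: $\lambda_1 = \lambda_2 = \lambda_3$, and likewise for the $\kappa_j$. I expect this to be the chief obstacle. Assuming $\lambda_1 > \lambda_2$, I would fix $d_1,d_2 > 0$ and set
\[
B = V^{-1} \begin{bmatrix} d_1 & -d_1/\lambda_1 & 0 \\ -d_2 & d_2/\lambda_2 & 0 \\ 0 & 0 & 1 \end{bmatrix},
\]
verify that $B$ is minimally semipositive, and compute $L(B)$; among its top two rows one then exhibits a $2 \times 2$ submatrix failing the $2 \times 2$ semipositivity criterion recorded at the opening of the $n = 2$ discussion, contradicting $L(B) \in S(\reals^3_+)$. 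This forces $\lambda_1 \le \lambda_2$, and the symmetric argument gives equality; running over all index pairs collapses the $\lambda_j$ to a single $\lambda$ and the $\kappa_j$ to a single $\kappa$.

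With the constants fixed, put $X = V$, which is row positive since $P_j = (Vx)_j > 0$ for every $x > 0$ (each $A_1 = x e_1^t$ being semipositive), and assemble the rows $\begin{bmatrix} 1 & \alpha_1 & \alpha_2 \end{bmatrix}$, $\lambda\begin{bmatrix} \mu_1 & 1 & \mu_2 \end{bmatrix}$ and the corresponding $\kappa$-scaled third row into a matrix $Y$. Since every rank one matrix $xy^t$ equals $\sum_i y_i A_i$ and rank one matrices span $M_3$, linearity of $L$ yields $L(A) = XAY$ for all $A \in M_3$. Finally, as $L$ preserves $S(\reals^3_+)$, Theorem~2.4 of~\cite{dgjjt} guarantees that $Y$ is inverse nonnegative, completing the proof.
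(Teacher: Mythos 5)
Your proposal reproduces the paper's own proof essentially step for step: the same reduction via Lemmas~\ref{positive-column-existence}--\ref{positive-column-3} to rank-one images of the $A_i$, the same Claim~1 (row-wise dependence of the $l$-vectors, proved by constructing a semipositive $B$ supported on two columns whose image has a zero first row), the same Claim~2 matrix $B = V^{-1}M$ with the skew-signed $2\times 2$ block (the paper's third diagonal entry $d_3/\phi_3$ versus your $1$ is immaterial), and the same final assembly $X = V$, $Y$ from the scaled row vectors, with inverse nonnegativity of $Y$ coming from Theorem 2.4 of \cite{dgjjt}.

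One justification in Claim~2 needs tightening. You conclude $L(B) \notin S(\reals^3_+)$ by exhibiting \emph{one} non-semipositive $2\times 2$ submatrix among the top two rows, but that inference is invalid in general: for instance $\begin{bmatrix} 1 & -1 & 1 \\ -1 & 1 & 1 \\ 1 & 1 & 1 \end{bmatrix}$ is semipositive (take $z = (1,1,1)^t$), yet its leading $2\times 2$ submatrix is not. What your construction actually delivers---and what the paper's argument rests on---is stronger: the second row of $L(B)$ is the negative multiple $-(d_2/d_1)$ of the first, so for every $z$ the quantities $(L(B)z)_1$ and $(L(B)z)_2$ have opposite signs and can never both be positive; equivalently, \emph{every} $2\times 2$ submatrix formed from those two rows fails to be semipositive. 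Since semipositivity is inherited by submatrices obtained by deleting rows, this yields $L(B) \notin S(\reals^3_+)$ and the contradiction with $\lambda_1 > \lambda_2$ stands. With that substitution the proof is complete and coincides with the paper's.
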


\begin{proof}
We know that $L(A_i)$ is a rank one semipositive matrix for $i = 1, 2, 3$. We also have 
$L(A_1) = \begin{bmatrix}
P_1\\ P_2\\ P_3
\end{bmatrix} \begin{bmatrix}
1 & \alpha & \beta
\end{bmatrix}$ for some $\alpha, \beta \in \reals$. Without loss 
of generality, assume that the second column of $L(A_2)$ is positive and write $L(A_2)$ 
as $\begin{bmatrix}
Q_1\\ Q_2\\ Q_3
\end{bmatrix} \begin{bmatrix}
\nu & 1 & \theta
\end{bmatrix}$ for some $\nu, \theta \in \reals$.
	
\medskip
\noindent
\underline{Claim 1:} The vectors $(l_{11}, l_{14}, l_{17})^t$ and $(l_{22}, l_{25}, l_{28})^t$ are linearly dependent. If not, then there exists $(u_1, u_2, u_3)^t$ 
and $(v_1, v_2, v_3)^t$ 
such that $\begin{bmatrix}
l_{22} & l_{25} & l_{28}\\
l_{11} & l_{14} & l_{17}
\end{bmatrix} \begin{bmatrix}
u_1 & v_1\\
u_2 & v_2\\
u_3 & v_3
\end{bmatrix} = \begin{bmatrix}
1 & 0\\
0 & 1
\end{bmatrix}$. 
Since $\begin{bmatrix}
l_{22} & l_{25} & l_{28}\\
l_{11} & l_{14} & l_{17}
\end{bmatrix}$ is a nonnegative matrix of rank two, the matrix 
$B = \begin{bmatrix}
u_1 & v_1 & 0\\
u_2 & v_2 & 0\\
u_3 & v_3 & 0
\end{bmatrix}$ is a semipositive matrix. Then, $L(B) = \begin{bmatrix}
0 & 0 & 0\\
{\ast} & {\ast} & {\ast}\\
{\ast} & {\ast} & {\ast}
\end{bmatrix}$, which is not 
semipositive. Thus, $(l_{11}, l_{14}, l_{17})^t$ and $(l_{22}, l_{25}, l_{28})^t$ are 
dependent vectors and so 
$(l_{11}, l_{14}, l_{17})^t = \lambda_1 (l_{22}, l_{25}, l_{28})^t$ 
for some $\lambda_1 > 0$. Similarly, it can be shown that 
$(l_{52}, l_{55}, l_{58})^t = \lambda_2 (l_{41}, l_{44}, l_{47})^t$ and 
$(l_{82}, l_{85}, l_{88})^t = \lambda_3 (l_{71}, l_{74}, l_{77})^t$, where 
$\lambda_2$ and $\lambda_3$ are positive real numbers.
	
\noindent
Now, assume that the last column of $L(A_3)$ is positive. We then have  
$L(A_3) = \begin{bmatrix}
R_1\\ R_2\\ R_3
\end{bmatrix} \begin{bmatrix}
\mu & \delta & 1
\end{bmatrix}$ for some $\mu, \delta \in \reals$. It can be seen that 
$(l_{33}, l_{36}, l_{39})^t = \phi_1 (l_{11}, l_{14}, l_{17})^t$, 
$(l_{63}, l_{66}, l_{69})^t = \phi_2 (l_{41}, l_{44}, l_{47})^t$ and 
$(l_{93}, l_{96}, l_{99})^t = \phi_3 (l_{71}, l_{74}, l_{77})^t$, for some 
positive real numbers $\phi_1, \phi_2$ and $\phi_3$.
	
\medskip
\noindent
We thus have $L(A_1) = \begin{bmatrix}
P_1\\ P_2\\ P_3
\end{bmatrix} \begin{bmatrix}
1 & \alpha & \beta
\end{bmatrix}, \ L(A_2) = \begin{bmatrix}
\lambda_1 P_1\\ \lambda_2 P_2\\ \lambda_3 P_3
\end{bmatrix} \begin{bmatrix}
\nu & 1 & \theta
\end{bmatrix} \ \mbox{and} \ L(A_3) = \begin{bmatrix}
\phi_1 P_1\\ \phi_2 P_2\\ \phi_3 P_3
\end{bmatrix} \begin{bmatrix}
\mu & \delta & 1
\end{bmatrix}$, where $\alpha, \beta, \nu, \theta, \mu$ and $\delta$ are real numbers.
	
\medskip
\noindent
\underline{Claim 2:} $\lambda_1 = \lambda_2 = \lambda_3$ and $\phi_1 = \phi_2 = \phi_3$. 
Suppose $\lambda_1 > \lambda_2$. Choose positive numbers $d_1, d_2$ and $d_3$. Define 
$B = \begin{bmatrix}
l_{11} & l_{14} & l_{17}\\
l_{41} & l_{44} & l_{47}\\
l_{71} & l_{74} & l_{77}
\end{bmatrix}^{-1}$ \ $\begin{bmatrix*}[c]
d_1 & \frac{-d_1}{\lambda_1}  & 0\\
-d_2 & \frac{d_2}{\lambda_2} & 0\\
0 & 0 & \frac{d_3}{\phi_3}
\end{bmatrix*}$. Since $B$ is inverse nonnegative, it is semipositive. 
We have \\ $L(B) = \begin{bmatrix}
d_1\\ -d_2\\ 0
\end{bmatrix}\begin{bmatrix}
1 & \alpha & \beta
\end{bmatrix} + \begin{bmatrix}
-d_1\\ d_2\\ 0
\end{bmatrix} 
\begin{bmatrix}
\nu & 1 & \theta
\end{bmatrix} + \begin{bmatrix}
0\\ 0\\ d_3
\end{bmatrix} 
\begin{bmatrix}
\mu & \delta & 1
\end{bmatrix}\\= 
\begin{bmatrix}
(1 - \nu) d_1 & -(1 - \alpha) d_1 & (\beta - \theta)d_1\\
-(1-\nu) d_2 & (1-\alpha) d_2 & - (\beta - \theta) d_2\\
\mu d_3 & \delta d_3 & d_3
\end{bmatrix}$, which is not semipositive (notice that each $2 \times 2$ 
submatrix of the matrix formed from the first two rows of $L(B)$ is not semipositive). 
Thus, $\lambda_1 \leq \lambda_2$. 
	
\noindent
Similarly, one can show that $\lambda_1 \geq \lambda_2$, thereby proving that they are equal. Proceeding similarly, we can show that $\lambda_1 = \lambda_2 = \lambda_3$ and 
$\phi_1 = \phi_2 = \phi_3$. Combining all of these, we see that 
$L(A) = XAY$, where $A = \begin{bmatrix}
a_{11} & a_{12} & a_{13}\\
a_{21} & a_{22} & a_{23}\\
a_{31} & a_{32} & a_{33}
\end{bmatrix}, \ X = \begin{bmatrix}
	l_{11} & l_{14} & l_{17}\\
	l_{41} & l_{44} & l_{47}\\
	l_{71} & l_{74} & l_{77} 
\end{bmatrix} \ \mbox{and} \ 
Y = \begin{bmatrix}
	1 & \alpha & \beta\\
	\lambda_1 \nu & \lambda_1 & \lambda_1 \theta\\
	\mu \phi_1 & \delta \phi_1 & \phi_1
\end{bmatrix}$. It is clear that $X$ is row positive. Inverse nonnegativity of $Y$ 
follows as the map preserves semipositivity.
\end{proof}

\end{document}